\newtheorem{thm}{Theorem}[section]
\newtheorem{prop}[thm]{Proposition}
\newtheorem{cor}[thm]{Corollary}
\theoremstyle{definition}
\theoremstyle{remark}
\newtheorem{rmk}[thm]{Remark}
\theoremstyle{remark}
\newtheorem{ex}[thm]{Example}
\def\leq{\leqslant}
\def\geq{\geqslant}
\colorlet{lgray}{white!85!black}
\colorlet{lred}{white!85!red}
\colorlet{lgreen}{white!80!green}
\colorlet{dgreen}{black!30!green}
\definecolor{green}{rgb}{0.1,0.8,0.1}
\definecolor{yellow}{rgb}{1.0,0.85,0.25}
\renewcommand{\tikz}[2]{
	\begin{tikzpicture}[scale=#1,baseline=(current bounding box.center),>=stealth]
		#2
\end{tikzpicture}}
\def\C{\mathcal{C}}
\def\I{\bm{I}}
\newcommand{\Is}[2]{\I_{[#1,#2]}}
\renewcommand\le{\leq}
\renewcommand\ge{\geq}
\def\O{\mathcal{O}}
\def\Z{\mathbb Z}
\def\C{\mathbb C}
\def\R{\mathbb R}
\def\A{\mathcal A}
\def\M{\mathcal M}
\newcommand{\eps}{\varepsilon}
\newcommand{\ii}{{\mathbf i}}
\numberwithin{equation}{section}
\begin{document}
	
	\title{Leaders in multi-type TASEP}
	
	\author{Alexei Borodin}\address[Alexei Borodin]{Department of Mathematics, Massachusetts Institute of Technology, U.S.A.}\email{borodin@math.mit.edu}
	
	\author{Alexey Bufetov}\address[Alexey Bufetov]{Institute of Mathematics, Leipzig University, Germany.}\email{alexey.bufetov@gmail.com}

	\begin{abstract}
		We study the totally asymmetric simple exclusion process (TASEP) on 
		$\mathbb{Z}$ with step initial condition, in which all particles have distinct types. Our main object of interest is the type of the rightmost particle --- the leader --- at large time $t$. We prove a central limit theorem for this random variable. Somewhat unexpectedly, the problem is closely connected to certain observables of voter and coalescing processes on $\mathbb{Z}$; we therefore derive their asymptotics as well. We also analyze the large-time behavior of a few other related observables, including certain multi-particle ones.
	\end{abstract}
	
	\maketitle
	
	%contents
	\setcounter{tocdepth}{1}
	\makeatletter
	\def\l@subsection{\@tocline{2}{0pt}{2.5pc}{5pc}{}}
	\makeatother
	\tableofcontents
	
	\section{Introduction}
	
	\subsection{Overview}
	
	The main object of study in this paper is the multi-type Totally Asymmetric Simple Exclusion Process (TASEP). This is an interacting particle system on $\mathbb{Z}$ whose states can be viewed as maps $\eta:\mathbb{Z}\to \mathbb{Z}\cup\{-\infty\}$. The value $\eta(x)=c\in \mathbb{Z}$ is interpreted as a particle of type $c$ occupying position $x$, while $\eta(y)=-\infty$ indicates the absence of a particle at location $y$, i.e., a hole.
	
	The continuous-time Markov evolution $(\eta_t)_{t\ge 0}$ is defined as follows. Each ordered pair of neighboring sites in $\mathbb{Z}$ is equipped with an independent exponential clock of rate $1$. When the clock associated with $(x,x+1)$ rings, we swap the values at $x$ and $x+1$ if they are in strictly decreasing order, and do nothing otherwise.
	
	We are primarily interested in the initial configuration
	\[
	\eta_0(x)=-x,\qquad x\le 0,
	\qquad\text{and}\qquad
	\eta_0(x)=-\infty,\qquad x>0,
	\]
	which is a multi-type version of the step initial condition, see Figure~\ref{fig:intro-1} for an example. Since particles of larger type have higher priority, the type of the right-most particle (the \textit{leader}) is nondecreasing under the dynamics. Let $L_1(t)$ denote the type of the leader at time $t$. One of our main results is the following theorem.
	
	\begin{thm}[Theorem \ref{th:leader-type} below]
		We have
		$$
		\frac{L_1(t)}{\sqrt{t}} \xrightarrow[t \to \infty]{} \frac{1}{\sqrt{\pi}} \exp \left( \frac{-a^2}{4} \right) \mathbf{1}_{a \ge 0} \, da, \qquad \mbox{in distribution}.
		$$
	\end{thm}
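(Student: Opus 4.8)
My plan is to pass from the many‑color leader to a two‑color observable, identify that observable with one for a voter/coalescing process, and then extract the folded Gaussian from a diffusive scaling limit combined with the reflection principle.

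\textbf{Step 1: reduction to a two‑type TASEP.} For each threshold $k\ge 1$, merge the colors $\{0,1,\dots,k-1\}$ into a single ``second class'' label and the colors $\{k,k+1,\dots\}$ into a ``first class'' label. By the standard color‑merging property the projected process is again a TASEP, now with two classes; with the step data of the theorem the first‑class particles occupy $\mathbb{Z}_{\le -k}$, exactly $k$ second‑class particles occupy $\{-(k-1),\dots,-1,0\}$, and all sites $>0$ are empty. Since the color of the leader is nondecreasing, $\{L_1(t)\ge k\}$ is precisely the event $\{R_k(t)=R_0(t)\}$, where $R_j(t)$ is the position of the rightmost particle of color $\ge j$; equivalently, it is the event that the rightmost particle of the two‑type system is first class, i.e. that the right‑edge block of $k$ second‑class particles has been entirely overtaken by the first‑class front by time $t$. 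So it suffices to compute $\mathbb{P}(R_k(t)=R_0(t))$ for $k=\lfloor a\sqrt t\rfloor$ and show it tends to $\mathrm{erfc}(a/2)=\mathbb{P}\big(|\mathcal N(0,2)|\ge a\big)$.

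\textbf{Step 2: matching to a voter/coalescing observable.} Encode the leader's history by the successive overtaking events at the front: each time the current challenger — the rightmost particle among colors strictly above the current leader — reaches the site just behind the leader, the leader's color jumps up. The bookkeeping is governed by the interfaces between $\{\text{colors}\le m\}$ and $\{\text{colors}>m\}$ as $m$ varies; once the deterministic rarefaction fan around the front is subtracted off, these interfaces evolve as a system of coalescing random walks on $\mathbb{Z}$ — dually, as a voter model — and this is the connection advertised in the abstract. Concretely I would use color‑position symmetry to turn the many‑particle event $\{R_k(t)=R_0(t)\}$ into a comparably simple event for this dual process whose probability can be written down exactly, reducing the problem to the large‑time asymptotics of that voter/coalescing observable (whose independent derivation is what the abstract also promises).

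\textbf{Step 3: scaling limit and the reflection principle.} The dual coalescing walk jumps to each neighbour at unit rate, so the interface is diffusive with variance‑rate $2$; after dividing by $\sqrt t$ the quantity $L_1(t)$ should converge to the running maximum over $[0,1]$ of a Brownian motion $B$ with $\mathrm{Var}(B_s)=2s$ — equivalently, by Lévy's theorem, to the local time at a level of such a motion up to time $1$. By the reflection principle $\max_{0\le s\le 1}B_s \stackrel{d}{=} |B_1| = |\mathcal N(0,2)|$, whose density on $[0,\infty)$ is $\tfrac{1}{\sqrt\pi}e^{-a^2/4}$, which is exactly the claimed limit; note the limiting mass at $a=0$ is $1$, matching $\mathbb{P}(L_1(t)\ge 0)=1$, and the $\sqrt t$ (rather than $t^{1/3}$) scale is what one expects for a ``soft'' second‑class‑particle observable living at velocity $0$ in the rarefaction fan.

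\textbf{Main obstacle.} The real work is in Steps 2--3: (i) making the reduction to coalescing random walks exact and uniform enough to control the leader over the whole interval $[0,t]$, not just at the terminal time; (ii) proving tightness of the interface process and its convergence to Brownian motion with enough control to pass to the running maximum, in particular ruling out pathological build‑ups of many high‑color particles just behind the front; and (iii) pinning the variance constant to $2$ — the difference between the answer being $|\mathcal N(0,2)|$ and $|\mathcal N(0,\sigma^2)|$ for another $\sigma$ lies entirely in the precise clock bookkeeping of the coalescing‑walk/voter description, which is where the separately established asymptotics of the voter and coalescing observables become essential.
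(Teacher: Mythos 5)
Your proposal is a heuristic roadmap rather than a proof, and the load-bearing steps are exactly the ones left open. The paper proves this theorem in the opposite logical direction from what you suggest: it first derives an \emph{exact} single contour integral for $\mathrm{Prob}(\text{$k_1$-leader has type}\ge k_2)$ (Proposition \ref{prop:1-leader-formula}, obtained from the duality/transition-probability formula of Corollary \ref{cor:integral} with two color cutoffs by summing over positions and taking one residue), and then performs a routine steepest-descent analysis at the saddle $w=0$ with $k_2-k_1\sim a\sqrt t$, landing directly on $1-\mathrm{erf}(a/2)$. The voter/coalescing asymptotics you want to \emph{use} as input (your Step 2) are in the paper \emph{consequences} of this theorem, obtained via the exact couplings and color-position symmetry of Section \ref{sec:Voter-section} (Propositions \ref{prop:voter-coupling} and \ref{prop:voter-tasep-dual}, Corollaries \ref{cor:asymp-voter} and \ref{cor:coalescence}); they are not available off the shelf as a starting point, so your plan would require an independent derivation of a nontrivial coalescing-walk limit theorem that you do not supply.

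Concretely, the gaps are: (i) your Step 2 reduction (``interfaces evolve as coalescing random walks once the rarefaction fan is subtracted'') is never made exact, and the actual dual system in the paper is a \emph{totally asymmetric} coalescing process whose particles jump only left at rate $1$ --- not a walk that ``jumps to each neighbour at unit rate'' --- so your stated derivation of the variance constant $2$ does not match the true dynamics (the constant would have to come from the difference of two unidirectional rate-$1$ walks, a different bookkeeping than the one you give); (ii) your Step 3 replaces the fixed-time marginal event $\{L_1(t)\ge a\sqrt t\}$ by a running-maximum/reflection-principle computation, which requires functional convergence and tightness of an interface process that you acknowledge but do not establish, and you never identify the process whose running maximum $L_1$ is supposed to be; the paper's approach needs none of this, since the answer comes out of a one-dimensional saddle-point computation on an exact formula. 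The folded-Gaussian answer and the constant are correct, but as written the argument does not constitute a proof.
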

	
	Let $\mathcal{S}(t)$ be the number of changes of the leader's type up to time $t$. We establish the asymptotic behavior of its expectation.
	
	\begin{thm}[Theorem \ref{th:leader-changes} below]
		We have
		$$
		\lim_{t \to \infty} \frac{ \mathbf{E} \left( \mathcal{S}(t) \right)}{\ln(t)} = \frac{3 \sqrt{3}}{4 \pi}.
		$$
	\end{thm}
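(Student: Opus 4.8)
\emph{Plan.} For $c\ge 1$ let $A_c(t)$ be the event that the type-$c$ particle occupies the right-most occupied site at some time $s\le t$. Since the leader's type is nondecreasing and every change of the leader installs a strictly larger type at the front, the number of changes up to time $t$ equals the number of types $c\ge 1$ that have ever been the leader; hence
\[
\mathcal{S}(t)=\sum_{c\ge 1}\mathbf{1}_{A_c(t)},\qquad
\mathbf{E}\bigl(\mathcal{S}(t)\bigr)=\sum_{c\ge 1}\mathbf{P}\bigl(A_c(t)\bigr).
\]
(Equivalently $\mathbf{E}(\mathcal{S}(t))=\int_0^t\lambda(s)\,ds$, where $\lambda(s)=\mathbf{P}\bigl(\text{the site just left of the leader carries a particle of type}>L_1(s)\bigr)$ is the expected intensity of leader changes, a change occurring exactly when the rate-one clock on that edge rings; I will use the discrete form, which exposes the logarithm.)

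The main reduction is to the time-independent quantity $p_c:=\lim_{t\to\infty}\mathbf{P}(A_c(t))$. This limit exists because $A_c(t)$ increases in $t$ and stabilizes: viewing the type-$c$ particle as a second-class particle in the environment of the single-type TASEP $\eta^{\ge c+1}$ recording the types $>c$ (a leftward-shifted step profile), its asymptotic speed is a.s.\ strictly below $1$ while that environment's front moves at speed $1$, so the front eventually and permanently overtakes the type-$c$ particle, after which it never leads again. The heart of the matter is the sharp asymptotic
\[
p_c=\frac{3\sqrt{3}}{2\pi}\cdot\frac1c\,\bigl(1+o(1)\bigr),\qquad c\to\infty .
\]
To obtain it I would rewrite $A_c(\infty)$ through the coupled nested single-type TASEPs $\eta^{\ge c}$: the type-$c$ particle leads at time $s$ precisely when the front of $\eta^{\ge c}$ coincides with the front of $\eta^{\ge 0}$ while that of $\eta^{\ge c+1}$ does not. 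This turns $\{A_c(\infty)\}_c$ into an observable of the family of fronts of the $\eta^{\ge c}$, which under diffusive rescaling is controlled by coalescing random walks — the processes whose large-time behavior is developed elsewhere in the paper. In that picture $p_c$ is the probability that a walk started a distance $c$ behind the edge reaches the edge before the competing walks overtake it: the factor $1/c$ reflects the starting distance, and the constant $3\sqrt{3}/(2\pi)$ emerges from the resulting (non-elementary) hitting computation.

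It remains to control the effect of truncating at time $t$. Fix $\varepsilon>0$. If the type-$c$ particle ever leads it does so for the first time at a time of order $c^2$, with good tails; hence for $c\le\varepsilon\sqrt{t}$ one has $\mathbf{P}(A_c(t))=(1+\delta(\varepsilon))p_c$ uniformly, with $\delta(\varepsilon)\to 0$ as $\varepsilon\to 0$, so that $\sum_{c\le\varepsilon\sqrt{t}}\mathbf{P}(A_c(t))=(1+\delta(\varepsilon))\frac{3\sqrt{3}}{2\pi}\bigl(\tfrac12\ln t+\ln\varepsilon\bigr)+O_\varepsilon(1)$. For $\varepsilon\sqrt{t}<c\le\sqrt{t}$ the crude bound $\mathbf{P}(A_c(t))\le p_c$ contributes $-\frac{3\sqrt{3}}{2\pi}\ln\varepsilon+O(1)$, and for $c>\sqrt{t}$ the bound $\mathbf{P}(A_c(t))\le\mathbf{P}(L_1(t)\ge c)$ sums to $o(1)$ by the Gaussian tail of $L_1(t)/\sqrt{t}$ from Theorem~\ref{th:leader-type}. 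Adding these, the $\ln\varepsilon$-terms collapse to an $O_\varepsilon(1)$ quantity, and letting $t\to\infty$ and then $\varepsilon\to 0$ yields $\mathbf{E}(\mathcal{S}(t))/\ln t\to\frac{3\sqrt{3}}{4\pi}$.

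The step I expect to be hardest is the exact constant $3\sqrt{3}/(2\pi)$ in the asymptotic for $p_c$: one must identify the precise coalescing-random-walk event that encodes ``type $c$ is ever the leader'', carry out the attendant hitting-probability computation (the softer $p_c\asymp 1/c$ being comparatively routine), and supply the uniform-in-$c$ estimates legitimizing the interchange of $t\to\infty$ with the divergent sum — in particular the behavior for $c$ of order $\sqrt{t}$, where one anticipates reusing the fluctuation analysis underlying Theorem~\ref{th:leader-type}.
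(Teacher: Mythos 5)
Your decomposition $\mathcal{S}(t)=\sum_{c\ge 1}\mathbf{1}_{A_c(t)}$ is a legitimate reorganization, but the argument has two genuine gaps, one of which is the entire content of the theorem. First, the asymptotic $p_c=\frac{3\sqrt{3}}{2\pi}\cdot\frac1c\,(1+o(1))$ is asserted, not derived: you neither identify the precise coalescing-walk event encoding $A_c(\infty)$ nor carry out the hitting computation that would produce the constant, and you acknowledge as much. Since the theorem is exactly a statement about that constant (the order $\ln t$, and even $p_c\asymp 1/c$, are comparatively soft), what you have is a plan whose hardest step is left undone; as written the constant appears to be reverse-engineered from the answer. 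The paper proceeds differently: it uses the time-integral form $\mathbf{E}(\mathcal{S}(t))=\int_0^t\lambda(s)\,ds$, where $\lambda(s)$ is precisely the probability (Proposition \ref{prop:number-changes-formula}) that the $0$-leader and the $(0,2)$-leader occupy adjacent sites with the latter of strictly larger type --- the configuration whose rate-one clock produces a change --- and this probability is an explicit double contour integral obtained from Corollary \ref{cor:integral}. A steepest-descent analysis at the critical point $(1,1)$, in which the naively leading $t^{-1/2}$ term vanishes by antisymmetry and the surviving correction is an explicitly computable Gaussian-type integral, gives $\lambda(s)=\frac{3\sqrt{3}}{4\pi}\,s^{-1}(1+o(1))$, and integrating produces the logarithm. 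Nothing in your proposal substitutes for this computation.

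Second, your tail estimate is wrong as stated: $\sum_{c>\sqrt{t}}\mathbf{P}(L_1(t)\ge c)$ is essentially $\mathbf{E}\bigl[(L_1(t)-\sqrt{t})^{+}\bigr]$, which by Theorem \ref{th:leader-type} is of order $\sqrt{t}$, not $o(1)$ --- far larger than the $\ln t$ you are trying to isolate. (Bounding instead by $p_c\le K/c$ fails too, since that series diverges.) To control the range $c\gtrsim\sqrt{t}$ you would need a genuinely joint bound of the shape $\mathbf{P}(A_c(t))\lesssim c^{-1}e^{-c^2/(Ct)}$, combining the $1/c$ decay with the Gaussian cost of the type-$c$ particle reaching the front within time $t$; this, like the claimed uniformity of $\mathbf{P}(A_c(t))\approx p_c$ for $c\le\varepsilon\sqrt{t}$ and the stabilization argument via second-class particle speeds, is plausible but would require its own proof.
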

	
	In addition, we prove several other asymptotic results for related quantities:
	
	\begin{itemize}
		\item In Theorem~\ref{th:leader-conditioned-asymp} we establish the joint limiting distribution of the position of the leader and its type.
		\item In Theorem~\ref{th:joint} we find the joint limiting distribution of the types of the \textit{two} leading (right-most) particles.
		\item In Section~\ref{sec:Voter-section} we prove an equality in distribution (a duality) between the leader's type and certain observables of the voter process on $\mathbb{Z}$. This allows us to translate the asymptotic results above to the voter-process setting; see, for instance, Corollaries~\ref{cor:asymp-voter} and \ref{cor:coalescence}.
		\item In Section~\ref{sec:Ranking} we introduce a new natural process --- the \textit{TASEP ranking process} --- whose behavior is closely related to the asymptotic behavior of leaders; see Proposition~\ref{prop:ranking-duality}.
	\end{itemize}
	
	Our approach is based on explicit formulas for $q$-moments of the stochastic six-vertex model (see Sections~\ref{sec:integral} and \ref{sec:observables}) and their $q\to 0$ degenerations.

	\begin{figure}
		\begin{tikzpicture}[x=0.9cm,y=0.9cm,>=stealth,font=\small]
			
			% --------------------------
			% Styles
			% --------------------------
			\tikzset{
				site/.style={circle, draw=black!35, fill=white, minimum size=5.5mm, inner sep=0pt},
				part/.style={circle, draw=black!70, very thick, minimum size=6.5mm, inner sep=0pt},
			}
			
			% Particle with type-dependent light color (red -> light blue, no dark tones).
			% We blend between red and cyan, but keep the mix modest to avoid darkening.
			% #1=x, #2=y, #3=type
			\newcommand{\particle}[3]{%
				% Map type to a percentage in [0,70] (cap at 70 so the color stays light).
				\pgfmathsetmacro{\mix}{min(70,10*#3)}%
				% Lighten by mixing with white.
				\node[part, fill=red!\mix!cyan!25!white] at (#1,#2) {\scriptsize\textbf{#3}};%
			}
			
			% Display window
			\def\xmin{-6}
			\def\xmax{5}
			
			% ============================================================
			% Top configuration: step initial condition
			% ============================================================
			\node[font=\bfseries] at (-0.5,3.2) {Step initial condition};
			
			\draw[->, black!70] (\xmin-0.7,2.2) -- (\xmax+0.8,2.2) node[right] {$x$};
			
			% sites and labels
			\foreach \x in {\xmin,...,\xmax} {
				\node[site] at (\x,2.2) {};
			}
			\node at (\xmin,1.55) {\scriptsize $\xmin$};
			\node at (-3,1.55)    {\scriptsize $-3$};
			\node at (0,1.55)     {\scriptsize $0$};
			\node at (3,1.55)     {\scriptsize $3$};
			\node at (\xmax,1.55) {\scriptsize $\xmax$};
			
			% particles at x<=0: type at 0 is 0, at -1 is 1, etc. (within the window)
			\foreach \x in {\xmin,...,0} {
				\pgfmathtruncatemacro{\ty}{-1*(\x)} % type = -x
				\particle{\x}{2.2}{\ty}
			}
			
			\node[align=left, anchor=west] at (\xmin-0.5,1.05)
			{\scriptsize Step IC: occupied for $x\le 0$, empty for $x>0$\\[-1pt]\scriptsize Type at position $x$ is $-x$};
			
			% ============================================================
			% Arrow indicating time downwards
			% ============================================================
			\draw[->, very thick, black!60] (-0.5,0.95) -- (-0.5,0.05);
			\node[right] at (-0.4,0.5) {$t$};
			
			% ============================================================
			% Bottom configuration: after several jumps (illustrative snapshot)
			% ============================================================
			\node[font=\bfseries] at (-0.5,-0.9) {After several jumps};
			
			\draw[->, black!70] (\xmin-0.7,-1.9) -- (\xmax+0.8,-1.9) node[right] {$x$};
			
			% sites and labels
			\foreach \x in {\xmin,...,\xmax} {
				\node[site] at (\x,-1.9) {};
			}
			\node at (\xmin,-2.55) {\scriptsize $\xmin$};
			\node at (-3,-2.55)    {\scriptsize $-3$};
			\node at (0,-2.55)     {\scriptsize $0$};
			\node at (3,-2.55)     {\scriptsize $3$};
			\node at (\xmax,-2.55) {\scriptsize $\xmax$};
			
			% A compact qualitative later configuration (NOT an exact sample path).
			% Larger types tend to be further right; leader has type > 0.
			% Ensure holes to the right of the leader (at 4 and 5).
			\particle{-6}{-1.9}{6}
			% hole at -5
			\particle{-4}{-1.9}{4}
			\particle{-3}{-1.9}{5}
			% hole at -2
			\particle{-1}{-1.9}{2}
			\particle{0}{-1.9}{0}
			\particle{1}{-1.9}{1}
			\particle{2}{-1.9}{3}
			\particle{3}{-1.9}{8} % leader
			
			% mark the leader
			\draw[->, very thick, black!70] (3,-0.95) -- (3,-1.55);
			\node[font=\bfseries, black!80] at (3,-0.75) {leader};
			
			% emphasize "holes after leader"
			%	\draw[->, black!60] (4.4,-0.9) -- (4,-1.55);
			%	\draw[->, black!60] (5.4,-0.9) -- (5,-1.55);
			%	\node[black!70] at (4.9,-0.7) {\scriptsize holes};
			
			%	\node[align=left, anchor=west] at (\xmin-0.5,-3.25)
			%	{\scriptsize Larger type $\Rightarrow$ stronger particle\\[-1pt]\scriptsize %tends to appear further right};
			
		\end{tikzpicture}
		\caption{TASEP with the multi-type step initial conditon (top) and a possible configuration of the TASEP after several jumps (bottom).}
		\label{fig:intro-1}
	\end{figure}

	\subsection{Background} 
	
	The totally asymmetric simple exclusion process (TASEP) is a prototypical interacting particle system on $\mathbb{Z}$, originating in the classical work of Spitzer \cite{Spi}. Since the early 1970s, TASEP and its variants have been studied extensively from probabilistic, hydrodynamic, and integrable perspectives; we refer to the monographs \cite{Lig,Lig2,KL,Spo} and the review \cite{Der} for background and further references. A particularly important integrable regime is the TASEP with step initial condition. In this setting, Rost proved a first nontrivial law of large numbers and local equilibrium results \cite{Ros}. Later, Johansson identified the $t^{1/3}$-scale fluctuations and the GUE Tracy--Widom limit using the connection to last passage percolation \cite{Joh}; see also \cite{PS,TW,Sch} and many other works that followed.
	
	Multi-type (or multi-species, or colored; we use these terms interchangeably) TASEP is a rich extension of the single-species model that has also attracted substantial attention. Beyond its intrinsic interest, there are at least two broad sources of motivation for studying this refined setting. First, \emph{second class} particles (and, more generally, discrepancies under the basic coupling) serve as microscopic probes of characteristics and shock/rarefaction phenomena; see, for instance, \cite{FK,MG,FP,BF}, and references therein. Second, multi-type systems exhibit a richer and often more natural algebraic and integrable structure, see, e.g., \cite{FM,AAV,Buf,BW-coloured,BW-observables,BB}.
	
	To the best of our knowledge, the present work is the first to study the \emph{types of the leading particles} in the multi-type TASEP started from the step initial condition. We address what we view as the most basic questions in this model. Obviously, our results are in no way exhaustive; see, for example, Remark~\ref{rem:permut} for a natural direction for further study. We hope that this paper will draw attention to this natural generalization of a classical and extensively studied integrable model.

	\subsection*{Acknowledgments}
	We are grateful to Oleg Zaboronski for helpful discussions. 
	A.~Borodin was partially supported by the NSF grant
	DMS-2450323, and the Simons Investigator program.
	A.~Bufetov was partially supported by the European Research Council (ERC), Grant Agreement No. 101041499.
	
	\section{Integral representation}\label{sec:integral}
	
	Our main object of interest is the so-called colored Asymmetric Simple Exclusion Process (or ASEP, for short), which is also known under the names of multi-type and multi-species ASEP. This is an interacting particle system on $\Z$ 
	whose states can be viewed as maps $\eta:\Z\to\Z\cup\{-\infty\}$. The value $\eta(x)=c\in \Z$ is interpreted as a particle of color $c$ occupying position $x$. The value $\eta(y)=-\infty$ is interpreted as the absence of a particle at location $y$. Denote by $\mathfrak C$ the set of all such states or particle configurations $\eta$.  
	
	The Markovian evolution of the colored ASEP is defined as follows. Each particle has two exponential clocks -- left and right -- of rates $L$ and $R$, respectively; all clocks are independent. When a clock rings, the particle attempts to move in the corresponding direction by a single unit. If the target site is occupied by a particle of a weakly higher color, the jump is suppressed. On the other hand, if it is occupied by a particle of a strictly lower color or not occupied at all, our particle swaps its position with the particle/hole at its target location. The fact that this is a well-defined Markov process follows from the well-known graphical construction of \cite{Harris1965}, \cite{Liggett1976}.
	
	We will be mostly interested in the case of \emph{Totally} ASEP, or TASEP, with $L=0$. Thus, in the TASEP only right jumps are initiated by the clocks. Without loss of generality, one can then set $R=1$. Whenever a colored ASEP shows up below, we will still assume that its right jump rate $R$ is equal to $1$, while we will re-denote its left jump rate $L$ by $q$. 
	
	For $\eta\in\mathfrak C$ and a color $c\in\Z$, denote by $\A_c(\eta)$ the subset of $\Z$ such that $x\in \A_c(\eta)$ if and only if $\eta(x)\ge c$. In other words, $\A_c(\eta)$ is the set of positions of particles of colors that are no less than $c$. 
	%We will also use the notation $A_c(t)$ if $\eta$ is undergoing a Markovian evolution with time parameter $t$. 
	
	Take $n\ge1$, and let $C=(c_1<\dots<c_n)\in \Z^n$ be an ordered $n$-tuple of integers (that are to be used momentarily as color cutoffs). Let $\eta\in\mathfrak C$ be such that $|\A_{c_i}(\eta)|\ge n-i+1$ and $\max(\A_{c_i}(\eta))<\infty$ for each $i=1,\dots,n$. In other words, for each color $c_i$, there are at least $(n-i+1)$ particles of color no less than $c_i$, and among those particles there is a right-most one (the latter condition is nontrivial as $\A_{c_i}(\eta)$ may be infinite). Note that if $\eta$ satisfies these conditions, any configuration that can be reached via time evolution started from $\eta$ with nontrivial probability will also satisfy them. 
	
	Given such $C\in \Z^n$ and $\eta\in\mathfrak C$, define $\M_C(\eta)=(k_1,\dots,k_n)\in\Z^n$ as follows. For $n,n-1,\dots,1$ in that order we recursively set
	\begin{equation}\label{eq:maximums}
		k_n=\max(\A_{c_n}(\eta)), \qquad k_i=\max\bigl(\A_{c_i}(\eta)\setminus \{k_{i+1},\dots,k_n\}\bigr), \quad i=n-1,\dots,1.
	\end{equation}
	In other words, $k_i$ records the maximum of $\A_{c_i}(\eta)$ disregarding the maxima that have already been recorded for larger color cutoffs $c_{i+1},\dots,c_n$. The assumptions of the previous paragraph guarantee that the definition of $\M_C(\eta)$ makes sense. Further, the definition implies that the coordinates of $\M_C(\eta)$ are necessarily pairwise distinct. In what follows we will also use the notation $\M_C(t)$ instead of $\M_C(\eta)$ if $\eta$ is undergoing a Markovian evolution with time parameter $t$.  
	
	In general, some of the maxima of $\A_{c_i}(\eta)$ may coincide. The definition of $\M_C(\eta)$ could be viewed as a natural way of modifying the sequence of those maxima in such a way that all its coordinates remain distinct. 
	
	We are interested in controlling the distribution of $\M_C(t)$. In order to state the main result of this section that will give a formula for this distribution, we need to introduce additional notation. 
	
	A family of rational functions 
	$$
	\varphi_\mu: \C^n \to \C, \qquad \mu\in \Z^n,\  \mu_i\ne \mu_j,
	$$
	indexed by $n$-tuples $\mu$ of unequal integers, is defined as follows. If $\delta=(\delta_1<\dots<\delta_n)\in\Z^n$ then 
	\begin{equation}\label{eq:antidominant}
		\varphi_\delta (w_1,\dots,w_n)=\prod_{i=1}^n \frac1{(1+w_i)^{\delta_i}}.
	\end{equation}
	Further, if $\mu$ is such that $\mu_i<\mu_{i+1}$ for some $1\le i\le n-1$, then 
	\begin{equation}
		\label{eq:phi-exchange}
		\varphi_{(\mu_1,\dots,\mu_{i+1},\mu_i,\dots,\mu_n)}(w_1,\dots,w_n)=\frac{w_{i+1}(1+w_i)}{w_{i}-w_{i+1}}\,(1-\mathfrak{s}_i)
		\cdot \varphi_\mu(w_1,\dots,w_n)
	\end{equation}
	with elementary transpositions $\mathfrak s_i\cdot h(w_1,\dots,w_n):=h(w_1,\dots,w_{i+1},w_i,\dots,w_n)$. This recurrence relation allows to extend the definition \eqref{eq:antidominant} to all $\mu$'s. The fact that such an extension is well-defined is nontrivial, and it will follow from the proof of Proposition \ref{prop:TASEP-transition} below. 
	
	We also need another family of rational functions
	$$
	\psi_\nu: \C^n \to \C, \qquad \nu\in \Z^n,\  \nu_i\ne \nu_j,
	$$
	indexed by the same set and defined in a similar fashion. For $\epsilon=(\epsilon_1>\dots>\epsilon_n)\in\Z^n$ we set
	\begin{equation}\label{eq:dominant}
		\psi_{\epsilon}(w_1,\dots,w_n)=\prod_{i=1}^n (1+w_i)^{\epsilon_i}.
	\end{equation}
	This is extended to arbitrary $\nu$'s using the following recurrence relation: If $\nu_i>\nu_{i+1}$ for some $1\le i\le n-1$, then 
	\begin{equation}
		\label{eq:psi-exchange}
		\psi_{(\nu_1,\dots,\nu_{i+1},\nu_i,\dots,\nu_n)}(w_1,\dots,w_n)=\frac{w_i(1+w_{i+1})-w_{i+1}(1+w_i)\mathfrak s_i}{w_{i}-w_{i+1}}\cdot\psi_\nu(w_1,\dots,w_n).
	\end{equation}
	
	As for $\varphi_\mu$'s, the correctness of this definition is not obvious, and it will follow from the proof of Proposition \ref{prop:TASEP-transition}.
	
	We are now ready to state an integral representation for the distribution of $\M_C(t)$.  
	
	\begin{thm} \label{thm:integral}
		Let $\eta_0\in\mathfrak C$ be a (random or deterministic) initial condition for the colored TASEP, and let $C=(c_1<\dots<c_n)$, $n\ge 1$, be a sequence of color cutoffs. Assume that $\M_C(0)=\M_C(\eta_0)$ is well-defined and deterministic, cf. the assumptions above \eqref{eq:maximums}; denote $\M_C(0)$ by $\nu$. Then 
		\begin{equation}\label{eq:integral}
			\begin{gathered}
				\mathrm{Prob}\{\M_C(t)=\mu\}=\frac{1}{(2\pi i)^n}\oint\cdots\oint_{|w_i|=R>1} \prod_{1\le i<j\le n} \left(1-\frac{w_i}{w_j}\right) \\ \times \varphi_\mu(w_1,\dots,w_n)\psi_\nu(w_1,\dots,w_n)\prod_{j=1}^n \frac{e^{tw_j} dw_j}{(1+w_j)^{n-j+1}}\,,
			\end{gathered}
		\end{equation}
		where the integral is over positively oriented circles $|w_i|=R$, $i=1,\dots,n$ with $R>1$. 
	\end{thm}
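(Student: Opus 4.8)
The plan rests on two ingredients. The first is a structural fact about the colored TASEP: the process $t\mapsto\M_C(\eta_t)$ is \emph{autonomous} --- its law depends on $\eta_0$ only through $\nu=\M_C(0)$ --- and in fact $\M_C(t)=(k_1(t),\dots,k_n(t))$ evolves exactly as the colored (``rainbow'') TASEP on $n$ particles, the particle started at $\nu_i$ carrying the priority determined by the cutoff $c_i$ (so the one started at $\nu_n$, $=\max\A_{c_n}(\eta_0)$, is never blocked, and so on downward). This is plausible because the sets $\A_{c_1}(\eta_t)\supseteq\cdots\supseteq\A_{c_n}(\eta_t)$ are nested and, by \eqref{eq:maximums}, the particle realizing $k_i$ can be blocked in a TASEP swap only by the particles realizing $k_{i+1},\dots,k_n$; one then has to check that each elementary swap of the underlying colored TASEP either leaves $\M_C$ unchanged or moves exactly one $k_i$ by one step, in accordance with the rainbow‑TASEP rules. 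Granting this, $\mathrm{Prob}\{\M_C(t)=\mu\}$ is a transition probability of an $n$‑particle Markov chain with bounded rates, and \eqref{eq:integral} becomes a transition‑probability formula for that chain; this is the content of Proposition~\ref{prop:TASEP-transition}.

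The second ingredient is the proof of that formula by the coordinate Bethe ansatz. Writing $P_t(\mu)$ for the right‑hand side of \eqref{eq:integral}, the factors $e^{tw_j}$ guarantee that $P_t$ satisfies the \emph{free} (non‑interacting) forward equation at every $\mu$; the interaction terms of the rainbow‑TASEP generator arise only when two coordinates of $\mu$ are adjacent, and these are reproduced precisely by the exchange relations \eqref{eq:phi-exchange}: inserting the operator $\mathfrak s_i$ under the integral and invoking \eqref{eq:phi-exchange} turns the spurious boundary term at $\mu$ into the one at the transposed index, after which one deforms the $w_i,w_{i+1}$ contours across the pole at $w_i=w_{i+1}$ and uses the resulting symmetry to cancel it. The one nontrivial consistency check is that sorting an arbitrary $\mu$ down to the increasing base index via different sequences of adjacent transpositions always yields the same $\varphi_\mu$ (and likewise for $\psi_\nu$); this is the braid relation for the ``$R$‑matrix'' $\frac{w_{i+1}(1+w_i)}{w_i-w_{i+1}}(1-\mathfrak s_i)$ and is verified by a direct three‑variable computation --- this is the promised proof that $\varphi_\mu$ and $\psi_\nu$ are well defined.

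It remains to match the initial data, i.e. to show
\[
\frac{1}{(2\pi i)^n}\oint\cdots\oint_{|w_i|=R>1}\ \prod_{1\le i<j\le n}\Bigl(1-\tfrac{w_i}{w_j}\Bigr)\,\varphi_\mu(w_1,\dots,w_n)\,\psi_\nu(w_1,\dots,w_n)\,\prod_{j=1}^n\frac{dw_j}{(1+w_j)^{n-j+1}}=\mathbf{1}_{\mu=\nu}.
\]
Using \eqref{eq:phi-exchange} and \eqref{eq:psi-exchange} one moves the exchange operators through the integral onto the kernel $\prod_{i<j}(1-w_i/w_j)\prod_j(1+w_j)^{-(n-j+1)}$, whose behaviour under $w_i\leftrightarrow w_{i+1}$ is explicit; inducting on the number of inversions of $\mu$ and of $\nu$ reduces the identity to the case where $\mu$ is anti‑dominant (increasing) and $\nu$ dominant (decreasing), which one evaluates by taking iterated residues at $w_j=-1$ and observing that the powers $(1+w_j)^{\bullet}$ force $\mu=\nu$. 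Finally, since the rainbow TASEP on $n$ particles is a continuous‑time Markov chain whose forward equations have a unique solution for given bounded initial data, $P_t(\cdot)\equiv\mathrm{Prob}\{\M_C(t)=\cdot\}$, which is \eqref{eq:integral}.

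I expect the principal difficulty to be twofold. On the combinatorial side it is the careful verification of the first ingredient --- that \eqref{eq:maximums} is preserved by the dynamics and that $\M_C(t)$ genuinely coincides with the rainbow TASEP on $n$ particles, including the delicate cases in which the particle realizing a given $k_i$ changes after a swap. On the analytic side it is the contour‑deformation bookkeeping in the Bethe‑ansatz step together with the biorthogonality relation above: one must track precisely which poles ($w_i=w_j$, $w_i=-1$, $w_i=0$) sit inside the contours $|w_i|=R>1$, and it is here that the specific shapes of \eqref{eq:phi-exchange}--\eqref{eq:psi-exchange} and of the weight $\prod_j(1+w_j)^{-(n-j+1)}$ are essential. (Alternatively, one could try to deduce \eqref{eq:integral} by specializing known $q$‑moment formulas for the colored stochastic six‑vertex model to $q=0$ and inverting, but the Bethe‑ansatz route seems the most direct.)
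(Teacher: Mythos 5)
Your proposal is correct in outline and shares the paper's decomposition into the same two lemmas: first, that $\M_C(t)$ is an autonomous Markov projection evolving exactly as the $n$-particle rainbow TASEP started from $\nu$ (this is Proposition~\ref{prop:duality}, and your description of it --- including the fact that the particle realizing $k_n$ is never blocked and that priorities increase with the cutoff index --- matches the paper's case analysis); second, that the $n$-particle transition probability equals the right-hand side of \eqref{eq:integral} (Proposition~\ref{prop:TASEP-transition}). Where you genuinely diverge is in the proof of the second lemma: you propose a direct coordinate Bethe ansatz --- free evolution from the $e^{tw_j}$ factors, color-dependent boundary conditions encoded by the exchange relations \eqref{eq:phi-exchange}, braid relations for well-definedness of $\varphi_\mu,\psi_\nu$, and a biorthogonality computation at $t=0$ reducing to the dominant/anti-dominant case. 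The paper explicitly acknowledges this route as possible but instead degenerates a known integral representation for powers of the transfer matrix of the colored stochastic vertex model (\cite[(9.5.2)]{BW-coloured}, following \cite{deGier-Mead-Wheeler}): it approximates the colored TASEP by a discrete-time vertex-model chain, takes the continuous-time limit $y=1+(1-q)\varepsilon$, $\varepsilon\to0$, sets $q=0$, and changes variables $w_j=x_j/(1-x_j)$, whereupon the recursions for $f_\mu$ and $g_\nu^*$ become \eqref{eq:phi-exchange} and \eqref{eq:psi-exchange}. Your route buys self-containedness at the price of carrying out in full the three verifications you flag (the multi-species boundary conditions and the $t=0$ orthogonality against the kernel $\prod_{i<j}(1-w_i/w_j)\prod_j(1+w_j)^{-(n-j+1)}$ are each substantial); the paper's route outsources exactly those verifications to \cite{BW-coloured} at the price of a somewhat involved limiting procedure. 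Note also that the paper obtains the well-definedness of $\varphi_\mu$ and $\psi_\nu$ as a byproduct of their identification with limits of $f_\mu$ and $g_\nu^*$, rather than by checking braid relations directly as you suggest.
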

	
	As we will only perform asymptotic analysis when $\psi_\nu$ is factorized as in \eqref{eq:dominant}, it is convenient to have a simplified formula in that case explicitly written. 
	
	\begin{cor}\label{cor:integral}
		In the notation of Theorem \ref{thm:integral}, assume that the coordinates of $\M_C(0)=\nu$ are decreasing: $\nu=(\nu_1>\dots>\nu_n)$. Then 
		\begin{equation}\label{eq:integral-dominant}
			\begin{gathered}
				\mathrm{Prob}\{\M_C(t)=\mu\}=\frac{1}{(2\pi i)^n}\oint\cdots\oint_{|w_i|=R>1} \prod_{1\le i<j\le n} \left(1-\frac{w_i}{w_j}\right) \\ \times \varphi_\mu(w_1,\dots,w_n)\prod_{j=1}^n 
				{(1+w_j)^{\nu_j-n+j-1}}{e^{tw_j} dw_j}\,,
			\end{gathered}
		\end{equation}
		where the integral is over positively oriented circles $|w_i|=R$, $i=1,\dots,n$ with $R>1$. 
	\end{cor}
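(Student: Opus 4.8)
The plan is to obtain Corollary~\ref{cor:integral} as an immediate specialization of Theorem~\ref{thm:integral}, with the only input being the explicit form of $\psi_\nu$ in the case at hand. The hypothesis that the coordinates of $\nu=\M_C(0)$ are strictly decreasing, $\nu_1>\dots>\nu_n$, is precisely the base case of the recursive definition of $\psi_\nu$: by \eqref{eq:dominant} one has $\psi_\nu(w_1,\dots,w_n)=\prod_{i=1}^n(1+w_i)^{\nu_i}$, and no application of the exchange relation \eqref{eq:psi-exchange} is required. In particular, the well-definedness issue flagged after \eqref{eq:psi-exchange} never arises in this specialization, so nothing beyond Theorem~\ref{thm:integral} is needed.

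Next I would substitute this product into the integrand of \eqref{eq:integral} and collect the two powers of $(1+w_j)$ that appear: the factor $(1+w_j)^{\nu_j}$ coming from $\psi_\nu$, and the factor $(1+w_j)^{-(n-j+1)}$ already present in $\prod_{j=1}^n e^{tw_j}(1+w_j)^{-(n-j+1)}\,dw_j$. Their product is the single power $(1+w_j)^{\nu_j-n+j-1}$, which is exactly the exponent appearing in \eqref{eq:integral-dominant}. All remaining factors — the Vandermonde-type product $\prod_{1\le i<j\le n}(1-w_i/w_j)$, the function $\varphi_\mu(w_1,\dots,w_n)$, the exponentials $e^{tw_j}$, and the contours $|w_i|=R>1$ — are carried over verbatim. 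This yields \eqref{eq:integral-dominant}.

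There is no real obstacle here; the content is the observation that ``$\nu$ strictly decreasing'' is the dominant base case \eqref{eq:dominant} rather than a case requiring the recurrence, followed by the elementary bookkeeping of exponents. The corollary is stated separately only because it is this factorized form of the integrand that will be used in the subsequent asymptotic analysis.
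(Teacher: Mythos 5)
Your proposal is correct and matches the paper's (implicit) treatment: the corollary is presented there as an immediate specialization of Theorem~\ref{thm:integral}, obtained exactly as you describe by noting that a strictly decreasing $\nu$ falls under the base case \eqref{eq:dominant}, so $\psi_\nu(w_1,\dots,w_n)=\prod_{i=1}^n(1+w_i)^{\nu_i}$, and then merging this with the factor $(1+w_j)^{-(n-j+1)}$ to produce the exponent $\nu_j-n+j-1$. Nothing further is needed.
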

	
	In view of the relative simplicity of the expressions in the right-hand sides of \eqref{eq:integral}-\eqref{eq:integral-dominant}, it is natural to ask where the definition of the left-hand sides came from. Indeed, it is not obvious that $\M_C(t)$ is a good object to consider; for us it appeared through a degeneration procedure of the observable for the colored stochastic vertex models introduced in \cite{BW-observables}. A brief explanation of that degeneration can be found in Section \ref{sec:observables} below. 
	
	The proof of Theorem \ref{thm:integral} consists of two steps that we state as two propositions below. 
	
	\begin{prop}\label{prop:duality}
		In the notation of Theorem \ref{thm:integral}, $\mathrm{Prob}\{\M_C(t)=\mu\}$ is equal to the probability that in a colored TASEP with $n$ particles of colors $1,\dots,n$ that are initially (at time $t=0$) at positions $\nu_1,\dots,\nu_n$, respectively, the positions of those particles at time $t$ are $\mu_1,\dots,\mu_n$, respectively. 
		In other words, this is exactly the transition probability of the $n$-particle TASEP with particles of different colors.   
	\end{prop}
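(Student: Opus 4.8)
The plan is to build an explicit coupling under which $\M_C(\eta_t)$ coincides, for every $t\ge 0$, with the position vector of an $n$-particle colored TASEP started from $\nu$; the proposition then follows at once. The first ingredient is the standard projection (``basic coupling'') fact: for any single cutoff $c$ the set-valued process $t\mapsto\A_c(\eta_t)$ is a single-species TASEP, and more generally the nested family $\A_{c_1}(\eta_t)\supseteq\cdots\supseteq\A_{c_n}(\eta_t)$ is a Markov process whose jumps are a deterministic function of the current family: when the clock at an ordered edge $(x,x+1)$ rings, the element $x$ is moved to $x+1$ in exactly those $\A_{c_i}$ with $x\in\A_{c_i}$ and $x+1\notin\A_{c_i}$ (by nestedness, an interval of indices $i$), and every other $\A_{c_i}$ is left untouched. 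Indeed, a swap in $\eta$ at $(x,x+1)$ occurs iff $\eta(x)>\eta(x+1)$, and it alters $\A_{c_i}$ iff $\eta(x)\ge c_i>\eta(x+1)$, i.e. iff $x\in\A_{c_i}$ and $x+1\notin\A_{c_i}$; one checks that nestedness is preserved by this rule. Since $\M_C(\eta)$ is, by \eqref{eq:maximums}, a deterministic function of $(\A_{c_1}(\eta),\dots,\A_{c_n}(\eta))$, the law of the process $(\M_C(\eta_t))_{t\ge 0}$ depends on $\eta_0$ only through this family (and $\M_C(\eta_t)$ stays well-defined for all $t$, as noted above \eqref{eq:maximums}).

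On a common probability space carrying independent rate-$1$ Poisson clocks on all ordered edges $(x,x+1)$, $x\in\Z$, I would run $\eta_t$ from $\eta_0$ via the graphical construction and, driven by the \emph{same} clocks, an $n$-particle colored TASEP $K(t)=(K_1(t),\dots,K_n(t))$ with colors $1<\cdots<n$ and $K(0)=\nu$: when $(x,x+1)$ rings, the $K$-particle at $x$ (if any) tries to jump to $x+1$, swapping with a strictly lower-colored $K$-particle sitting there and being blocked by a weakly higher-colored one. This $K(t)$ is exactly the $n$-particle colored TASEP. The heart of the argument is the claim that $\M_C(\eta_t)=K(t)$ for all $t\ge 0$ almost surely. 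Both processes are constant between clock rings and agree at $t=0$, so it suffices to check that each clock ring updates $\M_C(\eta_\cdot)$ and $K(\cdot)$ in the same way, given they agreed an instant before; write $(k_1,\dots,k_n)$ for their common value. If the clock is at $(x,x+1)$ with $x\notin\{k_1,\dots,k_n\}$, then $K$ is unchanged, and a downward induction on the cutoff index shows $\M_C(\eta)$ is too: the transfer, if it occurs at level $i$, moves the point $x$, which lies in $\A_{c_i}\setminus\{k_{i+1},\dots,k_n\}$ and hence satisfies $x<k_i$ and $x+1<k_i$, so — using that $k_{i+1},\dots,k_n$ have already been shown unchanged — the maximum defining $k_i$ in \eqref{eq:maximums} is unaffected. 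If the clock is at $(k_j,k_j+1)$, I would trace how the single transfer propagates through the nested family — according to whether $k_j+1$ equals some $k_{j'}$, and if so whether $c_{j'}<c_j$ or $c_{j'}>c_j$ — and verify, again by downward induction on the index and the recursion \eqref{eq:maximums}, that $\M_C(\eta)$ performs precisely particle $j$'s colored-TASEP move: $k_j\mapsto k_j+1$, together with $k_{j'}\mapsto k_j$ when $k_j+1=k_{j'}$ and $c_{j'}<c_j$, and no change when $k_j+1=k_{j'}$ and $c_{j'}>c_j$.

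Granting this claim, $\M_C(\eta_t)\stackrel{d}{=}K(t)$, whose distribution is, by construction, the transition probability of the $n$-particle colored TASEP from $\nu$ to $\mu$; and since only the relative order of the colors matters, using $c_1<\cdots<c_n$ in place of $1<\cdots<n$ gives the same process, which is exactly the statement of the proposition. The main obstacle is the case $(k_j,k_j+1)$: reconciling the ``disregard the maxima already recorded for larger cutoffs'' bookkeeping built into \eqref{eq:maximums} with the color-swap rule of the colored TASEP, and in particular keeping track of whether the single $x\mapsto x+1$ transfer is realized at each of the cutoffs $c_1,\dots,c_n$ simultaneously — this depends, in a somewhat delicate way, on which of the $k$'s occupy the two neighbouring sites $k_j$ and $k_j+1$. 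Equivalently, this step can be phrased as the assertion that $(\M_C(\eta_t))$ is an autonomous Markov process whose generator coincides with that of the $n$-particle colored TASEP; the content is the same.
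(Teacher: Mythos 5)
Your route is the paper's argument in pathwise clothing: the paper likewise proves the proposition by checking directly that $(k_1,\dots,k_n)=\M_C(\eta_t)$ jumps with exactly the rates of the $n$-particle colored TASEP, so your coupling via shared Poisson clocks and the generator identification you mention at the end are the same proof up to presentation. Your preliminary reductions are correct: the nested family $\A_{c_1}(\eta_t)\supseteq\cdots\supseteq\A_{c_n}(\eta_t)$ is autonomous, a ring at $(x,x+1)$ transfers $x$ to $x+1$ precisely in those $\A_{c_i}$ containing $x$ but not $x+1$, and the case $x\notin\{k_1,\dots,k_n\}$ is correctly dismissed (one should note that $x+1\ne k_i$ there because a transfer at level $i$ requires $x+1\notin\A_{c_i}$ while $k_i\in\A_{c_i}$, which is what makes your inequality $x+1<k_i$ valid).

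The issue is that the case you defer --- a ring at $(k_j,k_j+1)$ --- is not a technical remainder but the entire content of the proof, and your text only asserts the outcome you expect rather than deriving it. To close the gap you must verify, exactly as the paper does: (i) if there is no $j'>j$ with $k_{j'}=k_j+1$, then maximality of $k_j$ in $\A_{c_j}\setminus\{k_{j+1},\dots,k_n\}$ forces $\eta_t(k_j+1)<c_j\le\eta_t(k_j)$, so the $\eta$-swap occurs with rate $1$ and $k_j$ increases by $1$; and if moreover $k_l=k_j+1$ for some $l<j$, then the set defining $k_l$ in \eqref{eq:maximums} loses $k_j+1$ (now excluded as the updated $k_j$) and regains $k_j$, so the values of $k_j$ and $k_l$ swap; (ii) if $k_{j'}=k_j+1$ with $j'>j$, then either the $\eta$-swap is suppressed, or it occurs but changes no $\A_{c_i}\setminus\{k_{i+1},\dots,k_n\}$, because $\eta_t(k_j+1)\ge c_{j'}>c_j$ guarantees that the particle arriving at $k_j$ still contributes to $\A_{c_j}$; hence no $k_i$ moves. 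These are precisely the colored-TASEP rules for particle $j$. As written, your proposal is a correct and well-organized plan whose decisive step remains unexecuted.
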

	
	In particular, this means that $\M_C(\eta)$ can be viewed as a \emph{duality functional} relating the colored TASEP of Theorem \ref{thm:integral}, which may have infinitely many particles, and the $n$-particle colored TASEP of Proposition \ref{prop:duality}.
	
	\begin{prop}\label{prop:TASEP-transition}
		The transition probability over time $t$ of the colored TASEP with $n$ particles of colors $1,\dots,n$ between initial and final positions $\nu=(\nu_1,\dots,\nu_n)$ and $\mu=(\mu_1,\dots,\mu_n)$, respectively, is given by the right-hand side of \eqref{eq:integral}. 
	\end{prop}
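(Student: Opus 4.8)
The plan is to verify that the right‑hand side of \eqref{eq:integral}, viewed for fixed initial and final positions $\nu$ and $\mu$ as a function $G_t=G_t(\nu;\mu)$ of time $t\ge 0$, solves the master equation (forward Kolmogorov equation) of the $n$‑particle colored TASEP with the initial value $\mathbf{1}_{\mu=\nu}$; since the $n$‑particle generator is bounded, this system of linear ODEs has a unique solution, namely the transition probability, and the proposition follows. Two reductions organize the computation. First, the master equation separates into a ``free'' part --- valid whenever no two of the $\mu_i$ occupy adjacent sites --- and ``interaction'' corrections supported on configurations $\mu$ in which some pair of particles sits on neighboring sites and on the passage between different orderings of the $\mu_i$. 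Second, the verification of the $t=0$ value is reduced to the case when $\nu$ is decreasing, $\nu_1>\dots>\nu_n$, where $\psi_\nu$ factorizes as in \eqref{eq:dominant} (the setting of Corollary~\ref{cor:integral}), the general case being recovered afterwards from the recurrence \eqref{eq:psi-exchange}.

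The free part is automatic. For $\mu$ antidominant with all gaps at least $2$ one has $\varphi_\mu(w)=\prod_i (1+w_i)^{-\mu_i}$ by \eqref{eq:antidominant}, so moving the color‑$i$ particle one step to the left multiplies the integrand by $(1+w_i)$ while $\partial_t$ produces the factor $\sum_j w_j$; the free equation thus collapses to the identity $\sum_j w_j=\sum_i\bigl((1+w_i)-1\bigr)$, which holds regardless of the remaining $\nu$‑ and contour‑dependent prefactor. The real content is the interaction part: for $\mu$ with adjacent particles, and for $\mu$ outside the antidominant chamber (where $\varphi_\mu$ is no longer a monomial), one must check that the coefficient $\frac{w_{i+1}(1+w_i)}{w_i-w_{i+1}}(1-\mathfrak{s}_i)$ in \eqref{eq:phi-exchange} is exactly what the colored‑TASEP jump rule demands --- a \emph{suppressed} jump when the site ahead carries a higher color versus a \emph{swap} when it carries a lower one. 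This matching, and the compatibility needed for it to make sense, is the $q\to 0$ shadow of the Yang--Baxter structure of the colored stochastic vertex models; in particular it entails, as a direct (if slightly tedious) rational‑function computation, the braid relations and far commutativity for the exchange operators in \eqref{eq:phi-exchange} and, dually, in \eqref{eq:psi-exchange}. Those braid relations are precisely what makes the extensions of $\varphi_\mu$ and $\psi_\nu$ to arbitrary index tuples well defined, as announced in the text after \eqref{eq:phi-exchange}.

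For the value at $t=0$, expand $\varphi_\mu$ and $\psi_\nu$ via \eqref{eq:antidominant}--\eqref{eq:phi-exchange} and \eqref{eq:dominant}--\eqref{eq:psi-exchange} into finite sums over $\sigma\in S_n$ of rational coefficients times products $\prod_i(1+w_i)^{a_i}$ with $a_i\in\mathbb{Z}$. The $t=0$ value of \eqref{eq:integral} then becomes a finite sum of $n$‑fold contour integrals of rational functions, which one evaluates by residues: deforming the contours to $\infty$ (or to $-1$), the factors $\prod_{i<j}(1-w_i/w_j)$ and $\prod_j(1+w_j)^{-(n-j+1)}$ force all surviving exponents to vanish and cancel every contribution with $\mu\ne\nu$, leaving $\mathbf{1}_{\mu=\nu}$ when $\nu$ is decreasing. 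The passage to general $\nu$ rests on an adjointness identity: integration against the weight $\prod_{i<j}(1-w_i/w_j)\prod_j(1+w_j)^{-(n-j+1)}$ converts the action of the exchange operator of \eqref{eq:psi-exchange} on $\psi_\nu$ into the transposition $\mathfrak{s}_i$ acting on $\varphi_\mu$ (equivalently, $\mu\mapsto\mathfrak{s}_i\mu$), so that $G_0(\nu;\mu)=\mathbf{1}_{\mu=\nu}$ propagates from decreasing $\nu$ to all $\nu$. I expect this residue bookkeeping --- tracking which permutations survive and how they cancel --- together with the adjointness identity to be the most delicate computational part, the free equation and the well‑definedness of $\varphi_\mu,\psi_\nu$ being comparatively routine, and the matching of the colored‑TASEP rule with \eqref{eq:phi-exchange} being the main conceptual point.

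There is also a more pedestrian alternative. The colored $n$‑particle TASEP is the basic coupling of the nested single‑species TASEPs formed by the positions of particles of color $\ge c$, $c=1,\dots,n$ (with $n,n-1,\dots,1$ particles respectively); hence its transition probability is an inclusion--exclusion combination of Sch\"utz‑type determinantal transition probabilities for the single‑species TASEP, and a Cauchy--Binet manipulation reassembles that alternating sum into the single $n$‑fold integral \eqref{eq:integral}. One would use whichever version is cleaner to write out, but the Bethe‑ansatz route is the one that transparently accounts for the recurrences \eqref{eq:phi-exchange}--\eqref{eq:psi-exchange}.
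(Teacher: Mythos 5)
Your route is genuinely different from the paper's. You propose the direct Bethe-ansatz verification: show that the right-hand side of \eqref{eq:integral} solves the forward Kolmogorov equations of the $n$-particle colored TASEP with initial value $\mathbf{1}_{\mu=\nu}$, with the free equation holding in the bulk and the exchange relations \eqref{eq:phi-exchange} encoding the color-dependent blocking/swapping boundary conditions. The paper explicitly acknowledges that such a direct verification is possible but deliberately avoids it: it instead takes the integral representation \cite[(9.5.2)]{BW-coloured} for powers of a discrete-time transfer matrix of the colored stochastic vertex model (in the form used in \cite{deGier-Mead-Wheeler}), performs the continuous-time limit $y=1+(1-q)\varepsilon$, $\varepsilon\to 0$, to reach the colored ASEP, sets $q=0$, and changes variables $w_j=x_j/(1-x_j)$; in that derivation the recursions \eqref{eq:phi-exchange} and \eqref{eq:psi-exchange} arise as images of the known exchange relations for $f_\mu$ and $g^*_\nu$, and their well-definedness is inherited from the vertex-model construction rather than re-proved. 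Your approach is self-contained and conceptually transparent --- your free-equation check is correct, and you correctly locate the real content in the boundary conditions and the $t=0$ biorthogonality --- but it concentrates all the difficulty into precisely the steps you defer: the matching of the coefficient in \eqref{eq:phi-exchange} with the colored jump rules, the braid relations guaranteeing that $\varphi_\mu$ and $\psi_\nu$ are well defined, and the residue bookkeeping for the initial condition. Carrying these out in the colored setting is essentially the content of \cite{deGier-Mead-Wheeler}, so a complete write-up along your lines would be considerably longer than the paper's; what the paper's degeneration buys is that all of this is imported wholesale from the vertex-model literature. One concrete caution: your ``pedestrian alternative'' does not work as stated. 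The nested single-species TASEPs formed by the particles of color $\ge c$ only give the \emph{marginal} laws of the sets $\A_c$, and the joint transition probability of the colored process is not an inclusion--exclusion combination of the corresponding Sch\"utz determinants; recovering the joint law of the coupling requires genuinely more input, so that variant should be dropped or replaced by the Bethe-ansatz route you describe first.
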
	
	
	A colored ASEP version of the latter claim (for $\nu=(\nu_1>\dots>\nu_n)$, as in Corollary \ref{cor:integral}) has appeared as Theorem 3.5 in \cite{deGier-Mead-Wheeler}. Both formulas are suitable degenerations of an integral representation for transition probabilities (or entries of a transfer matrix) of the colored stochastic vertex models proved as \cite[(9.5.2)]{BW-coloured}. See the proof of Proposition \ref{prop:TASEP-transition} below for more details. 
	
	\begin{proof}[Proof of Proposition \ref{prop:duality}] It suffices to check that $\M_C(t)$ undergoes a Markovian evolution with jump rates that agree with those of the $n$-particle TASEP from the hypothesis. Consider a time moment $t\ge 0$, and denote by $\eta_t$ the state of the original TASEP of Theorem \ref{thm:integral} at $t$. Let $\M_C(t)=(k_1,\dots,k_n)$, choose $i$ such that $1\le i\le n$, and consider $k_i$. By the definition of $\M_C$, $\eta_t(k_i)\ge c_i$; that is, the position $k_i$ is occupied by a particle of color at least $c_i$. By the same definition, $k_i$ cannot change unless that particle changes its position, i.e., moves either left or right by 1. Let us consider all possible scenarios.    
		
		The particle at $k_i$ can only move left if $\eta_t(k_i-1)>\eta_t(k_i)$, in which case the rate of such move is equal to 1. If there is no $j>i$ such that $k_j=k_i-1$ then the corresponding swap of particles at $k_i-1$ and $k_i$ would not change $\A_{c_i}(\eta_t)\setminus\{k_{i+1},\dots,k_n\}$, which means that $k_i$ would not move. On the other hand, if $k_j=k_i-1$ for some $j>i$, then this particle swap would result in $\A_{c_i}(\eta_t)\setminus\{k_{i+1},\dots,k_n\}$ losing $k_i$ and gaining $k_{i}-1$. Since $k_i$ was previously the maximum of this set, the new maximum will be equal to $k_i-1$. Hence, we conclude that $k_i$ moves to the left by 1 with rate 1 if there is a $k_i-1=k_j$ with $j>i$ (note that the latter condition also implies that $\eta_t(k_i-1)=\eta_t(k_j)>\eta_t(k_i)$ as otherwise $\A_{c_j}(\eta_t)\setminus\{k_{j+1},\dots,k_n\}$ would contain a member that is larger than $k_j$). Further, as $k_i$ decreases by 1, $k_j$ increases by 1; i.e., the values of $k_i$ and $k_j$ are swapped. 
		
		Let us now look at the possibility of the particle at $k_i$ moving to the right. If there is no $j>i$ such that $k_j=k_i+1$,
		then we must have $\eta_t(k_i+1)<\eta_t(k_i)$, as otherwise $\A_{c_i}(\eta_t)\setminus\{k_{i+1},\dots,k_n\}$ would have a maximum greater than $k_i$. Hence, the right jump rate of the particle at $k_i$ is equal to 1, and if it jumps then $k_i= \max(\A_{c_i}(\eta_t)\setminus\{k_{i+1},\dots,k_n\})$ also increases by 1. Assume now that there is a $j>i$ such that $k_j=k_i+1$. Then either $\eta_t(k_i+1)\ge \eta_t(k_i)$, in which case the particle at $k_i$ cannot move to the right, or $\eta_t(k_i+1)< \eta_t(k_i)$. In the latter case, even though the particle at $k_i$ can move to the right, the value of $k_i$ is not going to change with that move. Indeed, such a move would change neither $\A_{c_j}(\eta)$, nor $\A_{c_j}(\eta)\setminus\{k_{j+1},\dots,k_m\}$, nor $k_j$, with the latter remaining equal to $k_i+1$. Further, $\eta_t(k_i+1)=\eta_t(k_j)\ge c_j>c_i$, which means that the particle landing at $k_i$ after the particle swap would still contribute to $\A_{c_i}(\eta)$, with $\A_{c_i}(\eta)\setminus\{k_{i+1},\dots,k_n\}$ and its maximum $k_i$ remaining unchanged. 
		Summarizing, $k_i$ moves to the right by 1 with rate 1 if and only if there is no $j>i$ such that $k_j=k_i+1$. If at the time of such a move there is an $l<i$ with $k_l=k_i+1$, then the values of $k_i$ and $k_l$ are swapped. 
		
		The above considerations show that the time evolution of $(k_1,\dots,k_n)$ is exactly that of the $n$-particle colored TASEP with increasing colors of the particles, as was desired.  
	\end{proof}
	
	\begin{proof}[Proof of Proposition \ref{prop:TASEP-transition}] It is possible to verify directly that the integral on the right-hand side of \eqref{eq:integral} is a unique solution of the Chapman-Kolmogorov equations for the $n$-particle colored TASEP with an appropriate initial condition. However, since this was already done in a substantially more general situation, it is more straightforward to take a limit of that more general result. 
		
		We are going to approximate the $n$-particle colored TASEP by a discrete time Markov chain. We start from a stochastic matrix whose rows and columns are indexed by elements of $\Z^n$ with unequal coordinates. They are explicitly given as partition functions of a doubly-infinite  single row vertex model that we symbolically picture as 
		\begin{equation}
			\label{eq:transfer-matrix}
			\tikz{0.6}{
				\draw[lgray,line width=1.5pt,<-] (1,0) -- (17,0);
				\foreach\x in {3,5,7,9,11,13,15}{
					\draw[lgray,line width=4pt,->] (\x,-1) -- (\x,1);
				}
				\node[left] at (1,0) {\tiny $0$};\node[right] at (17,0) {\tiny $0$};
				\node[below] at (3,-1) {\tiny $\cdots$};\node[above] at (3,1) {\tiny $\cdots$};
				\node[below] at (15,-1) {\tiny $\cdots$};\node[above] at (15,1) {\tiny $\cdots$};
				\node[below] at (9,-1) {\tiny $\I^0(\nu)$};\node[above] at (9,1) {\tiny $\I^0(\mu)$};
				\node[below] at (7,-1) {\tiny $\I^{-1}(\nu)$};\node[above] at (7,1) {\tiny $\I^{-1}(\mu)$};
				\node[below] at (5,-1) {\tiny $\I^{-2}(\nu)$};\node[above] at (5,1) {\tiny $\I^{-2}(\mu)$};
				\node[below] at (11,-1) {\tiny $\I^{1}(\nu)$};\node[above] at (11,1) {\tiny $\I^{1}(\mu)$};
				\node[below] at (13,-1) {\tiny $\I^{2}(\nu)$};\node[above] at (13,1) {\tiny $\I^{2}(\mu)$};
			}	
		\end{equation}
		
		Each vertex in our vertex model and in the figure above has four incident edges. The top and the bottom ones are marked by fixed vectors in $\Z^n$ with coordinates from $\{0,1\}$. If $\nu$ and $\mu$ are the initial and final states of a step  of the discrete time Markov chain we are constructing, then we set
		$$
		(\I^k(\nu))_j=\begin{cases} 1,& \text{if  } \nu_j=k,\\ 0,& \text{otherwise}, 
		\end{cases}
		\qquad
		(\I^k(\mu))_j=\begin{cases} 1,& \text{if  } \mu_j=k,\\ 0,& \text{otherwise}, 
		\end{cases}
		\qquad k\in\Z,\ j=1,\dots,n. 
		$$ 
		The left and right edges are marked by elements in $\{0,1,\dots,n\}$, and 0's on the left and on the right of the row symbolize that those marks have to be all equal to 0 sufficiently far to the left and sufficiently far to the right. The partition function of such a row is defined as the sum of the product of vertex weights over all the vertices, summed over all possible assignments of states of the horizontal edges. With the weights that we will use, it is easy to see that there is no more than one such assignment that leads to all vertices having a nonzero weight. Furthermore, in such an assignment the weights of all but finitely many vertices will be necessarily equal to 1, making the definition of the partition function meaningful. 
		
		The exact values of the vertex weights are given by the table below. If a vertex does not fit into one of the six pictured types, its weight is set to 0. 
		
		\begin{align}
			\label{eq:vertex-weights}
			\begin{tabular}{|c|c|c|}
				\hline
				\quad
				\tikz{0.7}{
					\draw[lgray,line width=1.5pt,<-] (-1,0) -- (1,0);
					\draw[lgray,line width=4pt,->] (0,-1) -- (0,1);
					\node[left] at (-1,0) {\tiny $0$};\node[right] at (1,0) {\tiny $0$};
					\node[below] at (0,-1) {\tiny $\I$};\node[above] at (0,1) {\tiny $\I$};
				}
				\quad
				&
				\quad
				\tikz{0.7}{
					\draw[lgray,line width=1.5pt,<-] (-1,0) -- (1,0);
					\draw[lgray,line width=4pt,->] (0,-1) -- (0,1);
					\node[left] at (-1,0) {\tiny $i$};\node[right] at (1,0) {\tiny $i$};
					\node[below] at (0,-1) {\tiny $\I$};\node[above] at (0,1) {\tiny $\I$};
				}
				\quad
				&
				\quad
				\tikz{0.7}{
					\draw[lgray,line width=1.5pt,<-] (-1,0) -- (1,0);
					\draw[lgray,line width=4pt,->] (0,-1) -- (0,1);
					\node[left] at (-1,0) {\tiny $i$};\node[right] at (1,0) {\tiny $0$};
					\node[below] at (0,-1) {\tiny $\I$};\node[above] at (0,1) {\tiny $\I^{-}_i$};
				}
				\quad
				\\[1.3cm]
				\quad
				$\dfrac{q^{-\Is{1}{n}}-q^{-1} y}{1-q^{-1} y}$
				\quad
				& 
				\quad
				$\dfrac{(1-y q^{-I_i}) q^{-\Is{i+1}{n}}}{1-q^{-1} y}$
				\quad
				& 
				\quad
				$\dfrac{(1-q^{-I_i}) q^{-\Is{i+1}{n}}}{1-q^{-1} y}$
				\quad
				\\[0.7cm]
				\hline
				\quad
				\tikz{0.7}{
					\draw[lgray,line width=1.5pt,<-] (-1,0) -- (1,0);
					\draw[lgray,line width=4pt,->] (0,-1) -- (0,1);
					\node[left] at (-1,0) {\tiny $0$};\node[right] at (1,0) {\tiny $i$};
					\node[below] at (0,-1) {\tiny $\I$};\node[above] at (0,1) {\tiny $\I^{+}_i$};
				}
				\quad
				&
				\quad
				\tikz{0.7}{
					\draw[lgray,line width=1.5pt,<-] (-1,0) -- (1,0);
					\draw[lgray,line width=4pt,->] (0,-1) -- (0,1);
					\node[left] at (-1,0) {\tiny $j$};\node[right] at (1,0) {\tiny $i$};
					\node[below] at (0,-1) {\tiny $\I$};\node[above] at (0,1) {\tiny $\I^{+-}_{ij}$};
				}
				\quad
				&
				\quad
				\tikz{0.7}{
					\draw[lgray,line width=1.5pt,<-] (-1,0) -- (1,0);
					\draw[lgray,line width=4pt,->] (0,-1) -- (0,1);
					\node[left] at (-1,0) {\tiny $i$};\node[right] at (1,0) {\tiny $j$};
					\node[below] at (0,-1) {\tiny $\I$};\node[above] at (0,1) 
					{\tiny $\I^{+-}_{ji}$};
				}
				\quad
				\\[1.3cm]
				\quad
				$\dfrac{y(q^{-\Is{1}{n}}-q^{-1})}{1-q^{-1} y}$
				\quad
				& 
				\quad
				$\dfrac{(1-q^{-I_j}) q^{-\Is{j+1}{n}}}{1-q^{-1} y}$
				\quad
				&
				\quad
				$\dfrac{y(1-q^{-I_i})q^{-\Is{i+1}{n}}}{1-q^{-1} y}$
				\quad
				\\[0.7cm]
				\hline
			\end{tabular} 
		\end{align}
		In this table we assume that $i,j \in \{1,\dots,n\}$ with $i<j$. The notation $\I_{[a,b]}$ stands for $\I_a+\dots+\I_b$, and $\I^{+-}_{kl}=\I+\mathbf{e}_k-\mathbf{e}_l$, where $\mathbf{e}_m$ is the standard basis vector with the single nonzero $m$th coordinate equal to 1, $1\le m\le n$.  
		Here $y$ and $q$ are real-valued parameters, with $y>1$, $0<q<1$.
		These weights coincide with those of \cite[(3.31)]{deGier-Mead-Wheeler}, which, in their turn, were obtained from the $\widetilde M_x$-weights of \cite[(2.5.1)]{BW-coloured} by setting $q=s^{-1/2}$ and $x=q^{-1/2}y$. 
		
		The fact that \eqref{eq:transfer-matrix}-\eqref{eq:vertex-weights} indeed define a stochastic matrix follows from the following observations. First, the vertex weights preserve the number of nontrivial markings -- they are nonzero only if the number of nontrivial entries in the bottom and right markings is equal to that in the top and left ones. 
		Second, the vertex weights are \emph{stochastic} -- with fixed markings of the bottom and right incident edges, the sum of the vertex weights over all possible assignments of the top and left edges is always equal to 1. This is not difficult to check directly, see also the proof of \cite[Proposition 2.5.1]{BW-coloured}. Third, the fact that in transition probabilities \eqref{eq:transfer-matrix} with a fixed bottom row, nonzero markings cannot run away infinitely far to the left with nontrivial probability is guaranteed by the condition that $y>1$, $0<q<1$, since then the top middle weight in \eqref{eq:vertex-weights} with $\I=(0,\dots,0)$ is strictly less than 1. Finally, if $\nu$ is a vector in $\Z^n$ with unequal coordinates, then each $\I^k(\nu)$ at the bottom of \eqref{eq:transfer-matrix} has no more than a single nonzero entry, and, thanks to the vanishing of the left bottom weight in \eqref{eq:vertex-weights} with $\I_{[1,n]}=1$, the same will be true for $\I^k(\mu)$, $k\in \Z$. Hence, for such $\nu$ the transition probabilities \eqref{eq:transfer-matrix} are nontrivial only if the vector $\mu$ also has unequal coordinates. The vertex weights in this case are written out explicitly in \cite[(3.32)]{deGier-Mead-Wheeler}, and they are manifestly nonnegative.    
		
		Let $\nu,\mu\in\Z^n$ be vectors with unequal coordinates. Denote by $T_{y,q}(\nu\to\mu)$ the transition probability afforded by  \eqref{eq:transfer-matrix}-\eqref{eq:vertex-weights}. Then, as is explained in \cite[Section 3.4.1]{deGier-Mead-Wheeler}, setting $y=1+(1-q)\varepsilon$ and taking the limit $\varepsilon\to 0$ yields the transition probabilities for the colored ASEP. More exactly, 
		\begin{equation}
			\label{eq:limit}
			\lim_{\epsilon\to 0} \left(T_{1+(1-q)\varepsilon,q}(\nu\to (\mu-([t/\epsilon],\dots,[t/\varepsilon]))\right)^{[t/\varepsilon]},
		\end{equation}
		converges to the time $t$ transition matrix for the colored ASEP, or TASEP at $q=0$. 
		
		Let us now use the fact that \cite[(9.5.2)]{BW-coloured} provides an integral representation for $(T_{y,q}(\nu\to\mu))^p$ for any $p\in\Z_{\ge 0}$, see also \cite[Theorem 3.4]{deGier-Mead-Wheeler}.
		It reads, substituting $s=q^{-1/2}$, $y_j=q^{-1/2} y$ there, and with other notations explained below,
		
		\begin{equation}
			\label{eq:Gmunu}
			\begin{gathered}
				(T_{y,q}(\nu\to\mu))^p	
				=	\frac{(-q)^{\frac 12(|\nu|-|\mu|)}q^{-np}}{(2\pi i)^n}  \oint_{C_1}
				\frac{dx_1}{x_1}
				\cdots 
				\oint_{C_n}
				\frac{dx_n}{x_n}
				\prod_{1 \leq i<j \leq n}
				\frac{x_j-x_i}{x_j-q x_i} \\
				\times
				\prod_{i=1}^{n}
				\left(\frac{x_i - q^{\frac 12} y}{x_i - q^{-\frac 12} y}\right)^p
				f_{\mu}({x}_1^{-1},\dots,{x}_n^{-1})
				g^{*}_{\nu}(x_1,\dots,x_n),
			\end{gathered}
		\end{equation}
		where $|\mu|=\sum_{j=1}^n \mu_j$, $|\nu|=\sum_{j=1}^n\nu_j$, and the integration contours satisfy \cite[Definition 8.1.1]{BW-coloured}:
		\begin{itemize}
			\item The contours $\{C_1,\dots,C_n\}$ are closed, positively oriented and pairwise non-intersecting.
			\item The contours $C_i$ and $q \cdot C_i$ are both contained within contour $C_{i+1}$ for all $1 \leq i \leq n-1$, where $q \cdot C_i$ denotes the image of $C_i$ under multiplication by $q$.
			\item All contours surround the point $s=q^{-1/2}$ and no other poles.
		\end{itemize} 
		
		The rational functions $f_\mu$ are defined by requiring that for $\delta=(\delta_1<\dots<\delta_n)\in \Z^n$,  
		\begin{equation}\label{eq:f-antidominant}
			f_\delta (x_1,\dots,x_n)= \prod_{i=1}^n \frac {1-q^{-1}}{1-q^{-\frac 12}x_i}\left(\frac{x_i-q^{-\frac 12}}{1-q^{-\frac 12}x_i}\right)^{\delta_i}, 
		\end{equation}
		and by the following recursion relation: If $\mu=(\mu_1,\dots,\mu_n)$ satisfies $\mu_i<\mu_{i+1}$ for some $1\le i\le n-1$, then 
		\begin{equation}
			\label{eq:f-exchange}
			\left(q - \frac{x_i-q x_{i+1}}{x_i-x_{i+1}} (1-\mathfrak{s}_i)\right) \cdot f_{\mu}(x_1,\dots,x_n) = f_{(\mu_1,\dots,\mu_{i+1},\mu_i,\dots,\mu_n)}(x_1,\dots,x_n).
		\end{equation}
		
		The correctness of this definition follows either from an independent definition of $f_\mu$ through suitable partition functions \cite[Definition 3.4.3]{BW-coloured}, or from commutation relations satisfied by the \emph{Demazure-Lusztig operators} in the left-hand side of \eqref{eq:f-exchange}.
		
		The \emph{dual} rational functions $g_\nu^*$ can be defined in a similar fashion, cf. \cite[(5.9.17)]{BW-coloured}, \cite[Theorem 5.8.1]{BW-coloured}, by requiring that for $\epsilon=(\epsilon_1>\dots>\epsilon_n)\in\Z^n$, 
		\begin{equation}\label{eq:g-antidominant}
			g_\epsilon(x_1,\dots,x_n)=\prod_{i=1}^n \frac 1{1-q^{-\frac 12}x_i}\left(\frac{x_i-q^{-\frac 12}}{1-q^{-\frac 12}x_i}\right)^{\epsilon_i},
		\end{equation}
		and by the following recursion relation: If $\nu=(\nu_1,\dots,\nu_n)$ is such that $\nu_i>\nu_{i+1}$, then 
		\begin{equation}
			\label{eq:g-exchange}
			\left(1-\frac{x_{i+1}-qx_i}{x_{i+1}-x_i}(1-\mathfrak s_i)\right)\cdot g_\nu(x_1,\dots,x_n)=g_{(\nu_1,\dots,\nu_{i+1},\nu_i,\dots,\nu_n)}(x_1,\dots,x_n).
		\end{equation}
		
		The integral representation \eqref{eq:Gmunu} is suitable for the limit transition \eqref{eq:limit}. Following the steps in 
		the proof of \cite[Theorem 3.5]{deGier-Mead-Wheeler}, we obtain the following formula for the transition probability of the colored ASEP over time $t$ between the initial and final point configurations $\nu$ and $\mu$:
		\begin{equation}
			\label{eq:ASEP-transition}
			\begin{gathered}
				\frac{(-q)^{\frac 12(|\nu|-|\mu|)}(-1)^n}{(2\pi i)^n}
				\oint
				\frac{dx_1}{x_1}
				\cdots 
				\oint
				\frac{dx_n}{x_n}
				\prod_{1 \leq i<j \leq n}
				\frac{x_j-x_i}{x_j-q x_i}\\ \times
				\prod_{j=1}^n
				\exp\left(\frac{(1-q)^2x_jt}{(1-x_j)(1-qx_j)}\right)
				f_{\mu}\left(q^{-\frac 12}{x}_1^{-1},\dots,q^{-\frac 12}{x}_n^{-1}\right)
				g^{*}_{\nu}\left(q^{\frac 12}x_1,\dots,q^{\frac 12}x_n\right),
			\end{gathered}
		\end{equation}
		where all variables are integrated over small positively oriented loops around 1. 
		
		Substituting the arguments of $f_\mu$ from \eqref{eq:ASEP-transition} into \eqref{eq:f-antidominant} gives 
		$$
		(-q)^{\frac{|\delta|}2}\prod_{i=1}^n \frac {(1-q)x_i}{1-q x_i}\left(\frac{1-x_i}{1-qx_i}\right)^{\delta_i},
		$$
		and swaps $x_i$ and $x_{i+1}$ in the difference operator in the left-hand side of \eqref{eq:f-exchange}. Similarly, substituting the arguments of $g_\nu^*$ into \eqref{eq:g-antidominant} gives  
		$$
		(-q)^{-\frac{|\epsilon|}2}\prod_{i=1}^n \frac 1{1-x_i}\left(\frac{1-qx_i}{1-x_i}\right)^{\epsilon_i},
		$$
		and does not change the difference operators in the left-hand side of \eqref{eq:g-exchange}. Thus, the power of $(-q)$ in front of \eqref{eq:ASEP-transition} cancels out, and we are free to set $q=0$ to reach the colored TASEP transition probability we are after. 
		
		The final step is the change of variables $w_j=x_j/(1-x_j)$. A straightforward computation shows that this takes \eqref{eq:ASEP-transition} with $q=0$ to the right-hand side of \eqref{eq:integral}, with the recursive definition of $f_\mu$ giving rise to \eqref{eq:antidominant}-\eqref{eq:phi-exchange}, and the recursive definition of $g_\nu^*$ giving rise to \eqref{eq:dominant}-\eqref{eq:psi-exchange}. 
	\end{proof}

	\section{Connection to the colored stochastic vertex models}\label{sec:observables} 
	
	The goal of this section is to explain how the observables $\mathrm{Prob}\{\M_C(t)=\mu\}$ of Theorem \ref{thm:integral} and Corollary \ref{cor:integral} originated from those for colored stochastic vertex models introduced in \cite{BW-observables}. In fact, Corollary \ref{cor:integral} is a direct degeneration of \cite[Theorem 1.1]{BW-observables} or \cite[Corollary 8.4]{Bufetov-Korotkikh}, and this is how we encountered it for the first time. While the approach we took in Section \ref{sec:integral} allowed us to prove the more general Theorem \ref{thm:integral}, it did not explain how the key observables were discovered, and such an explanation is precisely the {\it raison d'\^{e}tre} for the present section.  
	
	The colored stochastic vertex models of \cite{BW-observables} have a rather straightforward degeneration to the colored ASEP. This degeneration is very similar to the one used in the proof of Proposition \ref{prop:TASEP-transition} above, and it is described in \cite[Section 12.3.2]{BW-coloured}. We will not repeat the degeneration procedure here and introduce the observables of \cite{BW-observables} directly in the setting of the colored ASEP. 
	
	Consider a special initial condition of the colored ASEP, where position $k$ is occupied by a particle of color $(-k)$ for all $k\in\Z$; note that all colors are distinct -- this is the so-called \emph{rainbow case}. It is often referred to as the \emph{step} or \emph{fully packed initial condition}. Next, using the terminology of \cite{BW-observables}, let us choose a \emph{colored composition} of length $n\ge 1$ denoted as $\varkappa=(\varkappa_1,\dots,\varkappa_n)\in\Z^n$, where each part $\varkappa_j$ is assigned a color $b_j\in\Z$, $j=1,\dots,n$. In our degenerate case of the rainbow ASEP, it only makes sense to consider $\varkappa$ with pairwise distinct coordinates, which is what we will assume. 
	
	Let us define colored height functions for the colored ASEP. For its state $\eta\in\mathfrak C$, a color cutoff $c\in\Z$, and a position $k\in\Z$, 
	we set 
	\begin{equation}
		\label{eq:height-function}
		H_c(k)=|\{l\ge k:\eta(l)=c\}|, \qquad H_{>c}(k)=|\{l\ge k:\eta(l)>c\}|,  \qquad H_{\ge c}(k)=|\{l\ge k:\eta(l)\ge c\}|.
	\end{equation}
	For the random $\eta$ coming from the time evolution of the step initial condition, these quantities are integer-valued random variables. In particular, they take finite values almost surely. 
	
	Let us also define similar (although deterministic) quantities for the colored composition $\varkappa$:
	\begin{align*}
		H^\varkappa_c(k)&=\left|\{j\in\{1,\dots,n\}:\varkappa_j\ge k,\ b_j=c\}\right|, \\ H^\varkappa_{>c}(k)&=\left|\{j\in\{1,\dots,n\}:\varkappa_j\ge k,\ b_j>c\}\right|,\\
		H^\varkappa_{\ge c}(k)&=\left|\{j\in\{1,\dots,n\}:\varkappa_j\ge k,\ b_j\ge c\}\right|.
	\end{align*}
	
	The observables in question are labeled by colored compositions $\varkappa$ as above, and have the form, cf. \cite[Section 6.4]{BW-observables}:
	\begin{align*}
		\O_\varkappa (\eta) &= 
		\prod_{i=1}^n 
		\frac
		{q^{H_{\ge b_i}(\varkappa_i+1)-H_{\ge b_i}^\varkappa(\varkappa_i+1)}-q^{H_{>b_i}(\varkappa_i+1)-H_{>b_i}^\varkappa(\varkappa_i+1)}}{q-1}
		\\&=
		\prod_{i=1}^n \left(\mathbf{1}_{\{H_{b_i}(\varkappa_i+1)=1\}} \cdot q^{H_{>b_i}(\varkappa_i+1)-H_{>b_i}^\varkappa(\varkappa_i+1)}\right).  
	\end{align*}
	
	Here $q$ is the left jump rate of the colored ASEP. 
	The equality of the two expressions for $\O_\varkappa$ follows from the fact that for any $c\in\Z$, $H_c$ can only take values 0 or 1 (as we are in the rainbow case), as well as $H_{\ge c}=H_c+H_{>c}$, and $H_{b_i}^\varkappa(\varkappa_i+1)=0$. We will use the second expression. 
	
	In order to re-interpret this expression, we will use a remarkable \emph{color-position symmetry} that the colored ASEP with the step initial condition enjoys. It was originally discovered and proved in \cite{AAV}, see also \cite{BW-coloured}, \cite{BB}, \cite{Buf}. 
	
	Let $\eta(t)$ be the random state of the colored ASEP started from the step initial condition. Note that $\eta_t:\Z\to\Z$ is a bijection for every $t\ge 0$ with probability 1; let $\eta^{-1}_t:\Z\to\Z$ denote its inverse. Then $\eta_t^{-1}(c)$ is the location of the particle of color $c$ at time $t$ for every $c\in\Z$. The color-position symmetry says that $\eta_t$ and $\xi_t:=\eta_t^{-1}$ are \emph{equi-distributed} for any fixed $t\ge 0$.   
	
	Let us use the notation \eqref{eq:height-function} for the colored height functions associated with $\eta_t$, and a similar notation with $\widetilde H$ for the same quantities associated with $\xi_t=\eta_t^{-1}$. Then
	
	\begin{itemize}
		\item $H_i(j+1)=1$ is equivalent to $\eta_t^{-1}(i)\ge j+1$, or $\xi_t(i)\ge j+1$;   
		\item $H_{>i}(j+1)=h$ is equivalent to $|\eta_t(\{j+1,j+2,\dots\})\cap \{i+1,i+2,\dots\}|=h$, which is the same as 
		$|\{j+1,j+2,\dots\}\cap \eta^{-1}_t(\{i+1,i+2,\dots\})|=h$, or $\widetilde H_{>j}(i+1)=h$;
		
		% \item $H_{>b_i}^\varkappa(\varkappa_i+1)=H_{>\varkappa_i}^\varkappa(b_i+1)$ as both sides count the number of indices $j\in\{1,\dots,n\}$ such that $\varkappa_j>\varkappa_i$ and $b_j>b_i$.  
	\end{itemize}
	
	Hence,
	$$
	\O_\varkappa(\eta_t)=\prod_{i=1}^n \left(\mathbf{1}_{\{H_{b_i}(\varkappa_i+1)=1\}} \cdot q^{H_{>b_i}(\varkappa_i+1)-H^\varkappa_{>b_i}(\varkappa_i+1)}\right)=
	\prod_{i=1}^n \left(\mathbf{1}_{\{\xi_t(b_i)\ge \varkappa_i+1\}} \cdot q^{\widetilde H_{>\varkappa_i}(b_i+1)-H^\varkappa_{>b_i}(\varkappa_i+1)}\right).
	$$
	
	Let us now verify that if all the conditions in the indicator functions $\mathbf{1}_{\{\cdots\}}$ are satisfied, then all  exponents of $q$ in the above products are nonnegative. Indeed, if $H_{>b_i}^\varkappa(\varkappa_i+1)=l$, then there exist exactly $l$ indices $j_1,\dots,j_l\in\{1,\dots,n\}$ such that $\varkappa_{j_a}>\varkappa_i$ and $b_{j_a}>b_i$ for all $a=1,\dots,l$. The indicators in the right-hand side then tell us that $\xi_t(b_{j_a})\ge \varkappa_{j_a}+1>\varkappa_i+1$ for all $1\le a\le l$. Hence, $\widetilde H_{>\varkappa_i}(b_i+1)\ge l$, and the exponent of $q$ in the $i$th factor on the right-hand side is nonnegative. 
	
	Thus, we can now set $q$ to 0. This turns the colored ASEP into the colored TASEP, and $\O_\varkappa(\eta_t)$ can be written as
	\begin{equation}
		\label{eq:observable-final}
		\O_\varkappa(\eta_t)=\prod_{i=1}^n \left(\mathbf{1}_{\{\xi_t(b_i)\ge \varkappa_i+1\}}\cdot  \mathbf{1}_{\left\{\widetilde H_{>\varkappa_i}(b_i+1)=H^\varkappa_{>b_i}(\varkappa_i+1)\right\}}\right).
	\end{equation}     
	
	Let $C=(c_1<\dots<c_n)\in\Z^n$ be such that $\{c_1,\dots,c_n\}=\{\varkappa_1+1,\dots,\varkappa_n+1\}$, and let $\sigma$ be a permutation of $\{1\dots,n\}$ such that $c_j=\varkappa_{\sigma(j)}+1$ for all $1\le j\le n$. Then an examination of the 
	right-hand side of \eqref{eq:observable-final} shows that it is exactly equal to $\mathrm{Prob}\{\M_C(\xi_t)=(b_{\sigma(1)},\dots,b_{\sigma(n)})\}$, yielding 
	\begin{equation}
		\label{eq:equal-observables}
		\O_\varkappa(\eta_t)=\mathrm{Prob}\left\{\M_C(\xi_t)=\left(b_{\sigma(1)},\dots,b_{\sigma(n)}\right)\right\}.
	\end{equation}
	
	Thus, we see that the integral representation for $\mathbb{E} [\O_\varkappa]$ that was obtained in \cite{BW-observables} can be used to produce an integral representation for the right-hand side of \eqref{eq:equal-observables}. This would result in an alternative proof of Corollary \ref{cor:integral} in the case of the step initial condition, but would not go as far as proving the more general Theorem \ref{thm:integral}. The duality seen in Proposition \ref{prop:duality} was also anticipated (but not proved) in \cite[Section 6.3]{BW-observables}.  
	
	\section{Observables for leaders}
	
	In this section we introduce our main quantity of interest --- the notion of \textit{leaders} in multi-type TASEP --- and relate its behavior to the observables $\M_C (t)$ given by Corollary \ref{cor:integral}. 
	
	\subsection{Leaders: Definition}
	
	In this and subsequent sections we consider TASEP which starts at time $0$ from a configuration $\eta_0 (x)=-x$, for all $x \in \Z$. As before, we denote by $\eta_t$ the configuration of this TASEP at time $t$.
	Note that we have only one particle of each type, and the configuration is given by a permutation of $\Z$ at any time. The equality $\eta^{-1}_t (a) =b$ means that (the unique) particle of type $a$ is at position $b$ at time $t$; while the equality $\eta_t (a) =b$ means, as before, that (the unique) particle of type $b$ is at position $a$ at time $t$, for any $a,b \in \Z$. 
	
	In this paper, we mostly focus on the asymptotic behavior of \textit{leaders}, a notion that we now introduce. 
	We call a particle of type $i$ (for $i \ge k$) \textit{a $k$-leader} at time $t$ if $\eta^{-1}_t  (i) \ge \eta^{-1}_t  (z)$, for all $z \ge k$. More generally, we say that a particle of color $i$ is \textit{a $(k,s)$-leader} if $\eta^{-1}_t (i)$ is the $s$-th largest number among $\{ \eta^{-1}_t (z) \}_{z \ge k}$. Clearly, for any $s$ and $k$ there is a uniquely defined $(s,k)$-leader for any fixed configuration. We will be interested in the distribution of types of these leaders. 
	%We will refer to $0$-leader just as a leader, and $(0,2)$-leader as the second leader.
	
	\begin{ex}
		
		\begin{center}
			\setlength{\tabcolsep}{4pt} % tighter columns; tweak as you like
			\begin{tabular}{l *{14}{>{\small}c}}
				\hline
				\textbf{types of particles} & ... & >4 & >4 & 0  & 4 & -1 & 3 & -2 & 1 & 2 & <-2 & <-2 & ... & \\
				\textbf{positions} &... & ... & ... & -4 & -3 & -2 & -1 & 0 & 1 & 2 & ... & ... & ... &   \\
				\hline
			\end{tabular}
		\end{center}
		
		\medskip
		
		Consider the configuration above, 
		where the particle of type $0$ stands at position $-4$, the particle of type $4$ stands at position $-3$, and so on, while all positions $\le -5$ are occupied by all particles of types $>4$, and all positions $\ge 3$ are occupied by all particles of types $<-2$.
		
		For such a configuration, the $0$-leader is the particle of type $2$, the $(0,2)$-leader is the particle of type $1$, the $1$-leader is the particle of type $2$ again, and the $3$-leader is the particle of type $3$. 
	\end{ex} 
	
	\subsection{Leader distribution}
	
	\begin{prop}
		\label{prop:1-leader-formula}
		For any $k_2 > k_1 \in \Z$, we have
		\begin{equation}
			\label{eq:LeaderAsymp1v2}
			\mathrm{Prob} \left( \mbox{$k_1$-leader is of type $\ge k_2$} \right) = \frac{1}{2 \pi \ii} \oint_{|w|=R>1}
			\frac{w+2}{w} (1+w)^{k_1-k_2-1} \exp \left( \frac{t w^2}{1+w} \right) dw,
		\end{equation}
		where the integration contour is positively oriented.
	\end{prop}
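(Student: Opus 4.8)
The plan is to express the event $\{\text{$k_1$-leader has type}\ge k_2\}$ in terms of the observable $\M_C(t)$ of Corollary~\ref{cor:integral} with $n=2$ and $C=(k_1,k_2)$, and then integrate the resulting formula over the set of final states where the event holds.

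First I would record the combinatorial reduction. Write $(m_1,m_2)=\M_{(k_1,k_2)}(\eta_t)$, so that $m_2=\max\A_{k_2}(\eta_t)$ and $m_1=\max\bigl(\A_{k_1}(\eta_t)\setminus\{m_2\}\bigr)$. Since $\A_{k_2}(\eta_t)\subseteq\A_{k_1}(\eta_t)$ and $m_2\in\A_{k_2}(\eta_t)$, the position of the $k_1$-leader is $\max\A_{k_1}(\eta_t)=\max(m_1,m_2)$, and the particle sitting there has type $\ge k_2$ if and only if that position belongs to $\A_{k_2}(\eta_t)$, i.e. iff $\max(m_1,m_2)\le m_2$, i.e. (the coordinates of $\M$ being pairwise distinct) iff $m_1<m_2$. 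For the initial condition $\eta_0(x)=-x$ one has $\A_c(\eta_0)=\{x\le -c\}$, hence $\nu:=\M_{(k_1,k_2)}(\eta_0)=(-k_1,-k_2)$, which is strictly decreasing because $k_1<k_2$. Thus Corollary~\ref{cor:integral} applies, and since for $\mu_1<\mu_2$ the tuple $\mu$ is anti-dominant so that $\varphi_\mu(w_1,w_2)=(1+w_1)^{-\mu_1}(1+w_2)^{-\mu_2}$ by \eqref{eq:antidominant}, we obtain
\begin{multline*}
\mathrm{Prob}\bigl(\text{$k_1$-leader has type $\ge k_2$}\bigr)=\mathrm{Prob}(m_1<m_2)\\
=\sum_{\mu_1<\mu_2}\frac{1}{(2\pi\ii)^2}\oint\oint_{|w_i|=R}\Bigl(1-\tfrac{w_1}{w_2}\Bigr)(1+w_1)^{-\mu_1-k_1-2}(1+w_2)^{-\mu_2-k_2-1}e^{t(w_1+w_2)}\,dw_1\,dw_2.
\end{multline*}

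The main work, and the main obstacle, is evaluating this sum: the double series is not absolutely convergent on $|w_i|=R>1$, so a naive interchange of summation and integration is illegitimate. I would fix this by first deforming the contours term by term: for each fixed $\mu$ with $\mu_1<\mu_2$ the integrand is meromorphic in each variable, with its only $w_1$-singularity at $w_1=-1$ and its only $w_2$-singularities at $w_2=0,-1$, so the $w_1$-circle may be shrunk to $|1+w_1|=\eps$ with $\eps<1$ and the $w_2$-circle enlarged to $|w_2|=R_2$ with $\eps(R_2-1)>1$, with no term changed. On these contours $|1+w_1|=\eps<1$ and $|(1+w_1)(1+w_2)|\ge\eps(R_2-1)>1$. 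Now the sum over $\mu_1\in(-\infty,\mu_2-1]$ is a convergent geometric series (because $|1+w_1|<1$) and replaces $(1+w_1)^{-\mu_1-k_1-2}$ by $-\tfrac1{w_1}(1+w_1)^{-k_1-1-\mu_2}$; a residue computation at $w_1=-1$ then shows that the $\mu_2$-term vanishes for $\mu_2\le -k_1-1$, so the remaining sum runs over $\mu_2\ge -k_1$ only and is again geometric (because $|(1+w_1)(1+w_2)|>1$), and after the cancellations $(1+w_1)^{-k_1-1}(1+w_1)^{k_1+1}=1$ and $(1+w_2)^{-k_2-1}(1+w_2)^{k_1+1}=(1+w_2)^{k_1-k_2}$ it collapses to
\[
\frac{1}{(2\pi\ii)^2}\oint_{|w_2|=R_2}\oint_{|1+w_1|=\eps}\Bigl(1-\tfrac{w_1}{w_2}\Bigr)\Bigl(-\tfrac1{w_1}\Bigr)\frac{(1+w_2)^{k_1-k_2}}{(1+w_1)(1+w_2)-1}\,e^{t(w_1+w_2)}\,dw_1\,dw_2.
\]

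To finish, I would compute the inner integral by residues. After the cancellation the $w_1$-integrand has no pole at $w_1=-1$; inside $|1+w_1|=\eps$ its only pole is $w_1=-w_2/(1+w_2)$, which lies inside precisely because $\eps(R_2-1)>1$ forces $|1+w_1|=1/|1+w_2|\le 1/(R_2-1)<\eps$ there, with residue $\tfrac{w_2+2}{w_2(1+w_2)}e^{-tw_2/(1+w_2)}$. Substituting this and using $tw_2-\tfrac{tw_2}{1+w_2}=\tfrac{tw_2^2}{1+w_2}$ yields $\tfrac{1}{2\pi\ii}\oint_{|w_2|=R_2}\tfrac{w_2+2}{w_2}(1+w_2)^{k_1-k_2-1}\exp\!\bigl(\tfrac{tw_2^2}{1+w_2}\bigr)\,dw_2$, which is \eqref{eq:LeaderAsymp1v2}; since the integrand of the latter has singularities only at $w=0$ and $w=-1$, the contour may be replaced by any positively oriented circle $|w|=R>1$. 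Beyond the non-absolute-convergence issue flagged above, the only points requiring care are verifying that the $\mu_2$-sum is genuinely one-sided (via the vanishing residue) before the interchange, and checking that the contour deformations cross no singularities; the remaining algebra is routine.
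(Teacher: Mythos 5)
Your proposal is correct and follows essentially the same route as the paper: apply Corollary~\ref{cor:integral} with $C=(k_1,k_2)$, identify the event with $\{\M_C(t)=\mu,\ \mu_1<\mu_2\}$, sum the geometric series over positions on contours with $|1+w_1|<1$ and $|(1+w_1)(1+w_2)|>1$, and evaluate the $w_1$-integral by the residue at $w_1=-w_2/(1+w_2)$. Your additional care in justifying the interchange of summation and integration and in truncating the $\mu_2$-sum at $-k_1$ via a vanishing residue (where the paper simply starts the sum at $x_2=-k_1$) is a refinement of, not a departure from, the paper's argument.
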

	
	\begin{proof}
		
		Let $C=(k_1, k_2)$, for $k_2>k_1$, and $\mu = (x_1,x_2)$, with $x_1<x_2$. In that case, initially $\M_C(0)= (-k_1, -k_2)$. 
		
		By Corollary \ref{cor:integral}, we have
		\begin{equation}\label{eq:obs2-1-2var}
			\begin{gathered}
				\mathrm{Prob}\{\M_C(t)=\mu\}=\frac{1}{(2\pi \ii)^2} \oint_{|w_1|=R>1} \oint_{|w_2|=R>1} \left(1-\frac{w_1}{w_2}\right) \\ \times (1+w_1)^{-x_1} (1+w_2)^{-x_2} 
				(1+w_1)^{-k_1-2} (1+w_2)^{-k_2-1} \exp (t (w_1+w_2)) d w_1 d w_2 \,.
			\end{gathered}
		\end{equation}
		The event $\M_C(t)=\mu$ with such a choice of $C$ and $\mu$ can be rephrased as the event that the $k_1$-leader is at position $x_2$ and has type $\ge k_2$, while the $(k_1,2)$-leader is at position $x_1<x_2$. 
		
		We need to sum these probabilities over all possible values of $x_2$ and $x_1$ in order to exclude unnecessary (for our current purpose) information about locations. Doing the summations
		\begin{equation}
			\label{eq:obs2-1-3var}
			\sum_{x_1=-\infty}^{x_2-1} (1+w_1)^{-x_1} = \frac{(1+w_1)^{-x_2+1}}{-w_1}, \qquad |1+w_1| <1,
		\end{equation}
		and
		\begin{equation}
			\label{eq:obs2-1-4var}
			\sum_{x_2=-k_1}^{\infty} (1+w_1)^{-x_2} (1+w_2)^{-x_2} = \frac{(1+w_1)^{k_1+1} (1+w_2)^{k_1+1}}{w_1+w_2+w_1 w_2}, \qquad |(1+w_1)(1+w_2)| >1,
		\end{equation}
		in \eqref{eq:obs2-1-2var}, we obtain
		\begin{multline}
			\label{eq:LeaderAsymp-Type2}
			\mathrm{Prob} \left( \mbox{$k_1$-leader is of type $\ge k_2$} \right) \\ = \frac{1}{(2 \pi \ii)^2} \oint_{w_1} \oint_{w_2}
			\frac{w_2-w_1}{w_1 w_2} \frac{(1+w_2)^{k_1-k_2}}{w_1+w_2+w_1 w_2} \exp (t (w_1+w_2)) dw_1 dw_2,
		\end{multline}
		where the contour of integration in $w_1$ is a small circle around $(-1)$, and the contour of integration in $w_2$ is a large circle around $(-1)$ such that $|(1+w_1)(1+w_2)| >1$ (we first choose the radius for the $w_1$-contour, and then for the $w_2$-contour). The integrand has a unique $w_1$-pole at $w_1 = -w_2 (1+w_2)^{-1}$, which is inside the $w_1$ contour due to our conditions on contours. Taking the residue, we arrive at the statement of the proposition.
		
	\end{proof}
	\begin{prop}
		\label{prop:leader-position}
		For any integers $k_2 > k_1$ and $x \ge k_1$, we have
		\begin{multline}
			\label{eq:LeaderAsymp1v2}
			\mathrm{Prob} \left( \mbox{$k_1$-leader is of type $\ge k_2$ and is located at $x$} \right) \\ = \frac{1}{2 \pi \ii} \oint_{w_1} \oint_{w_2}
			\frac{w_1-w_2}{w_1 w_2} (1+w_1)^{-k_1-x-1} (1+w_2)^{-k_2-x-1} \exp \left( t (w_1 +w_2) \right) dw_1 dw_2,
		\end{multline}
		where the $w_1$-contour encircles $-1$ and no other poles of the integrand, while $w_2$-contour encircles the poles at $w_2=-1$ and $w_2=0$, but no other poles of the integrand. Both contours are positively oriented.
	\end{prop}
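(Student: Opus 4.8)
The plan is to specialize the argument proving Proposition~\ref{prop:1-leader-formula}: there one sums the two-variable formula over the positions of \emph{both} the $k_1$-leader and the $(k_1,2)$-leader, whereas here the position $x$ of the $k_1$-leader is held fixed and only the position of the $(k_1,2)$-leader is summed out. First I would take $C=(k_1,k_2)$; under the initial condition $\eta_0(y)=-y$ one then has $\M_C(0)=(-k_1,-k_2)$, which is decreasing because $k_2>k_1$, so Corollary~\ref{cor:integral} applies, and for $\mu=(x_1,x)$ with $x_1<x$ it gives exactly \eqref{eq:obs2-1-2var} with $x_2$ replaced by $x$. Exactly as recorded in the proof of Proposition~\ref{prop:1-leader-formula} --- and as follows from the definition \eqref{eq:maximums} of $\M_C$ together with the fact that, since $k_2>k_1$, the rightmost particle of type $\ge k_1$ also realizes $\max\A_{k_2}(\eta_t)$ precisely when its own type is $\ge k_2$ --- the event $\{\M_C(t)=(x_1,x)\}$ with $x_1<x$ is the event that the $k_1$-leader is of type $\ge k_2$ and located at $x$ while the $(k_1,2)$-leader is located at $x_1$. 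Since $\A_{k_1}(\eta_t)$ is a.s.\ infinite it has a well-defined second-largest element, so these events (over integers $x_1<x$) partition the event whose probability we want, and that probability equals $\sum_{x_1<x}\mathrm{Prob}\{\M_C(t)=(x_1,x)\}$.

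Next I would carry out this summation inside the contour integral in \eqref{eq:obs2-1-2var}. To make the geometric series converge and the interchange of sum and integral legitimate, I would first deform the $w_1$-contour from $|w_1|=R>1$ to a small positively oriented circle around $-1$ of radius $<1$; this crosses no pole, because before the summation the integrand is analytic in $w_1$ away from $w_1=-1$ (in particular it has no pole at $w_1=0$). On the new contour $|1+w_1|<1$, so $\sum_{x_1<x}(1+w_1)^{-x_1}=(1+w_1)^{-x+1}/(-w_1)$ as in \eqref{eq:obs2-1-3var}, the convergence is uniform on the contour, and term-by-term integration is justified by absolute convergence.

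Performing the summation and simplifying --- using $(1+w_1)^{-x+1}(1+w_1)^{-k_1-2}=(1+w_1)^{-k_1-x-1}$, $(1+w_2)^{-x}(1+w_2)^{-k_2-1}=(1+w_2)^{-k_2-x-1}$, and $(1-w_1/w_2)/(-w_1)=(w_1-w_2)/(w_1w_2)$ --- turns the integrand into the one on the right-hand side of \eqref{eq:LeaderAsymp1v2}. Finally I would record the contours: the shrunken $w_1$-contour encircles $w_1=-1$ and nothing else, in particular not $w_1=0$, which has become a pole only as a result of the summation and lies outside it; the $w_2$-contour may be kept as the circle $|w_2|=R>1$, which is precisely a positively oriented loop enclosing the two poles $w_2=-1$ and $w_2=0$ of the resulting integrand and no others. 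This is the contour prescription in the statement.

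I do not expect a real obstacle --- this is a one-summation reduction of the proof of Proposition~\ref{prop:1-leader-formula}. The only point that needs care is the order of operations: one must shrink the $w_1$-contour so that $|1+w_1|<1$ \emph{before} summing, and then note that the pole at $w_1=0$ produced by that summation sits outside the shrunken contour and so is correctly excluded.
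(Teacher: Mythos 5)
Your proposal is correct and is essentially the paper's own proof: the paper simply notes that one repeats the argument of Proposition~\ref{prop:1-leader-formula}, performing the summation \eqref{eq:obs2-1-3var} over the position of the $(k_1,2)$-leader but not the summation \eqref{eq:obs2-1-4var} over the leader's position, and plugging the result into \eqref{eq:obs2-1-2var}. Your additional care about the order of contour deformation and summation, and about the pole at $w_1=0$ created by the summation, is a correct elaboration of the same route.
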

	
	\begin{proof}
		
		The proof is exactly the same as in Proposition \ref{prop:1-leader-formula} with the only difference that we perform the summation \eqref{eq:obs2-1-3var}, but not \eqref{eq:obs2-1-4var}, and plug the result into \eqref{eq:obs2-1-2var}.
		
	\end{proof}
	
	\subsection{Joint distribution of two leaders}
	
	\begin{prop}
		\label{prop:2leaders-1}
		Let $k_3 >k_2 > k_1$ be arbitrary integers. We have the following equality:
		\begin{multline}
			\label{eq:prop2leaders1}
			\mathrm{Prob} \left( \mbox{the $k_1$-leader has type $\ge k_3$, } \mbox{the $(k_1, 2)$-leader has type $\ge k_2$} \right)
			\\ =
			\frac{1}{(2 \pi \ii)^2} \oint_{|v_2|=2} \oint_{|v_3|=2} \frac{(v_3-v_2)(v_2 v_3^2-1)(v_2^2 v_3-1) v_2^{k_1-k_2} v_3^{k_1-k_3}}{ v_2^2 v_3 (v_2 v_3-1) (v_2-1) (v_3-1)^3 } \\ \times \exp \left( t \left( \frac{1}{v_2 v_3} + v_2 + v_3 - 3 \right) \right) d v_2 d v_3.
		\end{multline}
		where the integration contours are counterclockwise.
	\end{prop}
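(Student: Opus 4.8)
The plan is to apply Corollary~\ref{cor:integral} with cutoffs $C=(k_1,k_2,k_3)$, to recognize the event in \eqref{eq:prop2leaders1} as a disjoint union of events $\{\M_C(t)=(x_1,x_2,x_3)\}$, to sum the integral representation \eqref{eq:integral-dominant} over the relevant triples, and finally to collapse the resulting triple contour integral to a double one by a single residue evaluation. Throughout, write $\A_c=\A_c(\eta_t)$ for the time-$t$ sets.

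First I would identify the event. Since $\eta_0(x)=-x$ gives $\max(\A_c(\eta_0))=-c$ for every $c$, one has $\M_C(0)=(-k_1,-k_2,-k_3)$, whose coordinates are strictly decreasing, so Corollary~\ref{cor:integral} applies with $\nu=(-k_1,-k_2,-k_3)$. Write $p^{(1)}>p^{(2)}>p^{(3)}>\cdots$ for the positions at time $t$ of the particles of type $\ge k_1$, so $p^{(s)}$ is the position of the $(k_1,s)$-leader, and note that $\A_{k_1}=\{p^{(1)},p^{(2)},\dots\}$. Using the recursive definition \eqref{eq:maximums} of $\M_C$ through the nested maxima $\max(\A_{c_i})$, I would check that the event
\[
E:=\{\text{$k_1$-leader has type}\ge k_3\}\cap\{\text{$(k_1,2)$-leader has type}\ge k_2\}
\]
coincides with the event that the three coordinates of $\M_C(t)$ are in increasing order, and that on $E$ one has $\M_C(t)=(p^{(3)},p^{(2)},p^{(1)})$. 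Indeed, if the $k_1$-leader has type $\ge k_3$ then $\max(\A_{k_3})=\max(\A_{k_1})=p^{(1)}$ is the largest coordinate of $\M_C(t)$, and if moreover the $(k_1,2)$-leader has type $\ge k_2$ then $\max(\A_{k_2}\setminus\{p^{(1)}\})=p^{(2)}$ is the middle coordinate and $\max(\A_{k_1}\setminus\{p^{(1)},p^{(2)}\})=p^{(3)}$ the smallest; conversely, if the coordinates of $\M_C(t)$ are increasing then their maximum equals $\max(\A_{k_3})$, which forces $\max(\A_{k_1})=\max(\A_{k_3})$, and then inspecting the middle coordinate shows the $(k_1,2)$-leader has type $\ge k_2$. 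Since the $s$-th largest position among the type-$\ge k_1$ particles is non-decreasing in $t$, $p^{(1)}\ge-k_1$ always. Hence, using that $x_1<x_2<x_3$ is antidominant so that $\varphi_{(x_1,x_2,x_3)}(w_1,w_2,w_3)=\prod_i(1+w_i)^{-x_i}$,
\[
\mathrm{Prob}(E)=\sum_{x_3\ge-k_1}\ \sum_{x_2<x_3}\ \sum_{x_1<x_2}\mathrm{Prob}\{\M_C(t)=(x_1,x_2,x_3)\},
\]
the summands with $x_1<-k_1-2$ or $x_2<-k_1-1$ vanishing identically.

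Next I would carry out the summation. In \eqref{eq:integral-dominant} the integrand, as a function of $w_1$, has only the singularity $w_1=-1$; so I deform the $w_1$-contour to $|1+w_1|=\eps$ with a fixed $\eps<1/2$, keep the $w_2$-contour the circle $|1+w_2|=2$ (still enclosing the $w_2$-poles at $0$ and $-1$), and take the $w_3$-contour to be $|1+w_3|=r_3$ with $r_3>1/(2\eps)$. On these contours $|1+w_1|=\eps<1$, $|(1+w_1)(1+w_2)|=2\eps<1$ and $|(1+w_1)(1+w_2)(1+w_3)|=2\eps r_3>1$, so the three geometric series
\[
\sum_{x_1\le x_2-1}(1+w_1)^{-x_1},\quad
\sum_{x_2\le x_3-1}\bigl[(1+w_1)(1+w_2)\bigr]^{-x_2},\quad
\sum_{x_3\ge-k_1}\bigl[(1+w_1)(1+w_2)(1+w_3)\bigr]^{-x_3}
\]
converge absolutely and uniformly on the contours and can be summed under the integral. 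Performing them in this order and writing $v_i=1+w_i$, all powers of $v_1$ cancel and the summed integrand becomes
\[
\frac{1}{(2\pi\ii)^3}\,\frac{(v_2-v_1)(v_3-v_1)(v_3-v_2)\,v_2^{\,k_1-k_2}v_3^{\,k_1-k_3}}{(v_1-1)(v_2-1)(v_3-1)^2(v_1v_2-1)(v_1v_2v_3-1)}\,e^{t(v_1+v_2+v_3-3)},
\]
integrated over $|v_1|=\eps$, $|v_2|=2$, $|v_3|=r_3$. Then I would evaluate the $v_1$-integral by residues: this integrand has simple $v_1$-poles at $v_1=1$, $v_1=1/v_2$ and $v_1=1/(v_2v_3)$, and by the choice of radii only $v_1=1/(v_2v_3)$ lies inside $|v_1|=\eps$; taking that residue (dividing by $\partial_{v_1}(v_1v_2v_3-1)=v_2v_3$ and setting $v_1=1/(v_2v_3)$, which turns the exponential into $e^{t(1/(v_2v_3)+v_2+v_3-3)}$) and simplifying the rational factor reproduces precisely the integrand of \eqref{eq:prop2leaders1}, integrated over $|v_2|=2$ and $|v_3|=r_3$. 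Since that integrand has $v_3$-singularities only at $v_3=0$, $v_3=1$ and $v_3=1/v_2$, all of modulus $<2$, the $v_3$-contour contracts from radius $r_3$ to radius $2$ without crossing a pole, giving \eqref{eq:prop2leaders1}.

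I expect the main obstacle to be the first step --- the combinatorial identification of $E$ with ``the coordinates of $\M_C(t)$ are increasing'' via the nested-maxima description \eqref{eq:maximums} --- together with the bookkeeping of the contour radii in the second step that simultaneously makes all three geometric series converge and guarantees that, after the deformation, the $w_1$-contour encircles exactly the pole $v_1=1/(v_2v_3)$. Once these are in place, the residue computation and the polynomial simplifications are routine and entirely parallel to the $n=2$ case leading to Proposition~\ref{prop:1-leader-formula}.
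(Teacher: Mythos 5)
Your proposal is correct and follows essentially the same route as the paper's proof: apply Corollary~\ref{cor:integral} with $C=(k_1,k_2,k_3)$ and $\mu=(x_1,x_2,x_3)$ increasing, sum the three geometric series on nested contours around $-1$, change variables to $v_i=1+w_i$, and take the residue at the unique enclosed $v_1$-pole $v_1=1/(v_2v_3)$ before contracting the $v_3$-contour. The only differences are cosmetic (your more explicit identification of the event with ``the coordinates of $\M_C(t)$ are increasing'' and your bookkeeping of the contour radii), and your residue computation does reproduce the stated integrand.
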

	
	\begin{proof}
		
		Let $C=(k_1, k_2,k_3)$, for $k_3>k_2>k_1$, and $\mu = (x_1,x_2,x_3)$, with $x_1<x_2<x_3$. In that case, $\M_C(0)= (-k_1, -k_2,-k_3)$. By Corollary \ref{cor:integral}, we have
		\begin{equation}\label{eq:obs2-2-2var}
			\begin{gathered}
				\mathrm{Prob}\{\M_C(t)=\mu\}=\frac{1}{(2\pi \ii)^3} \oint_{|w_1|=R>1} \oint_{|w_2|=R>1} \oint_{|w_3|=R>1} \frac{(w_3-w_1)(w_3-w_2)(w_2-w_1)}{w_2 w_3^2} \\ \times (1+w_1)^{-x_1-k_1-3} (1+w_2)^{-x_2-k_2-2} (1+w_3)^{-x_3-k_3-1} 
				{e^{t(w_1+w_2 + w_3)} d w_1 d w_2 d w_3}\,.
			\end{gathered}
		\end{equation}
		The event $\M_C(t)=\mu$ with such a choice of $C$ and $\mu$ can be rephrased as the event that $k_1$-leader is at position $x_3$ and has type $\ge k_3$, the $(k_1,2)$-leader is at position $x_2$ and has type $\ge k_2$, and the $(k_1,3)$-leader is at position $x_1$. 
		
		We are interested in the event
		\begin{equation}
			\label{eq:asymp2joint1}
			\mathrm{Prob} \left( \mbox{the $k_1$-leader has type $\ge k_3$, } \mbox{the $(k_1, 2)$-leader has type $\ge k_2$} \right).
		\end{equation}
		In order to obtain it, we need to sum over $x_i$'s. 
		
		First, let us deform the contours to positively oriented circles $|w_1+1|=\varepsilon$, $|w_2|=2$, $|w_3|=2/\varepsilon$, for a positive $\varepsilon \ll 1$. Second, we perform the following summations:
		$$
		\sum_{x_1=-\infty}^{x_2-1} (1+w_1)^{-x_1-3} = \frac{(1+w_1)^{-x_2-2}}{-w_1}, \qquad \left| 1 + w_1 \right| <1.
		$$
		$$
		\sum_{x_2=-\infty}^{x_3-1} (1+w_1)^{-x_2-2} (1+w_2)^{-x_2-2} = \frac{(1+w_1)^{-x_3-1} (1+w_2)^{-x_3-1} }{1-(1+w_1)(1+w_2)}, \qquad \left| (1 + w_1)(1+w_2) \right| <1.
		$$
		\begin{multline*}
			\sum_{x_3=-k_1}^{+\infty} (1+w_1)^{-x_3-1} (1+w_2)^{-x_3-1} (1+w_3)^{-x_3-1} = \frac{(1+w_1)^{k_1-1} (1+w_2)^{k_1-1} (1+w_3)^{k_1-1} }{1-\left( (1+w_1)(1+w_2)(1+w_3) \right)^{-1}}
			\\ = \frac{(1+w_1)^{k_1} (1+w_2)^{k_1} (1+w_3)^{k_1} }{(1+w_1)(1+w_2)(1+w_3) -1}, \qquad \left| (1 + w_1)(1+w_2)(1+w_3) \right| >1.
		\end{multline*}
		Note that for our chosen contours the indicated inequalities on $w_1,w_2, w_3$ hold. Plugging the results of these summations sequentially into \eqref{eq:obs2-2-2var}, we arrive at the equality
		\begin{multline*}
			\mathrm{Prob} \left( \mbox{the $k_1$-leader has type $\ge k_3$,} \mbox{the $(k_1, 2)$-leader has type $\ge k_2$} \right)
			\\ =
			\frac{1}{(2 \pi \ii)^3} \oint_{|w_1+1|=\varepsilon} \oint_{ |w_2|=2} \oint_{|w_3|=2/\varepsilon} \frac{(w_3-w_2)(w_3-w_1)(w_2-w_1) (1+w_2)^{k_1-k_2} (1+w_3)^{k_1-k_3}}{w_1 w_2 w_3^2 ( (1+w_1)(1+w_2)-1 ) ( (1+w_1)(1+w_2)(1+w_3) -1 ) } \\ \times \exp \left( t(w_1+w_2+w_3) \right) d w_1 dw_2 dw_3.
		\end{multline*}
		
		Next, let us make a simple change of variables $1+w_i=v_i$ in the integral expression above. We obtain
		\begin{multline*}
			\frac{1}{(2 \pi \ii)^3} \oint_{|v_1|=\varepsilon} \oint_{ |v_2-1|=2} \oint_{|v_3-1|=2/\varepsilon} \frac{(v_3-v_2)(v_3-v_1)(v_2-v_1) v_2^{k_1-k_2} v_3^{k_1-k_3}}{(v_1-1) (v_2-1) (v_3-1)^2 ( v_1 v_2 -1 ) ( v_1 v_2 v_3 -1 ) } \\ \times \exp \left( t(v_1+v_2+v_3 - 3) \right) d v_1 d v_2 d v_3.
		\end{multline*}
		
		For our contours, the only $v_1$-pole is inside the contour is $v_1 = 1/(v_2 v_3)$. Taking the residue in this $v_1$-pole (and also moving the contours after that), we arrive at the statement of the proposition. 
		
	\end{proof}
	
	\begin{prop}
		\label{prop:2leaders-2}
		Let $k_3 > k_2 >k_1$ be arbitrary integers. We have the following equality:
		\begin{multline*}
			\mathrm{Prob} \left( \mbox{the $k_1$-leader has type $\in [k_2;k_3)$,} \mbox{the $(k_1, 2)$-leader has type $\ge k_3$} \right) 
			\\ = \frac{1}{(2 \pi \ii)^2} \oint_{|v_2|=2} \oint_{|v_3|=2} \frac{(v_2 v_3^2-1)(v_2^2 v_3-1) \left( v_2^{k_1-k_2} v_3^{k_1-k_3} - v_2^{k_1-k_3} v_3^{k_1-k_2} \right) }{ v_2^2 v_3 (v_2 v_3-1) (v_2-1) (v_3-1)^2 } \\ \times \exp \left( t \left( \frac{1}{v_2 v_3} + v_2 + v_3 - 3 \right) \right) d v_2 d v_3.
		\end{multline*}
		
	\end{prop}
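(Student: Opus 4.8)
The plan is to follow the proof of Proposition~\ref{prop:2leaders-1} almost verbatim, the only structural change being that the event in question corresponds to a \emph{non-antidominant} order of the coordinates of $\mu=\M_C(t)$. Take $C=(k_1,k_2,k_3)$, so that $\M_C(0)=(-k_1,-k_2,-k_3)$ is dominant and Corollary~\ref{cor:integral} applies. Arguing exactly as in the proof of Proposition~\ref{prop:2leaders-1}, one checks that $\M_C(t)=\mu=(x_1,x_2,x_3)$ with $x_1<x_3<x_2$ precisely encodes the event that the $k_1$-leader sits at $x_2$ with type $\in[k_2;k_3)$, the $(k_1,2)$-leader sits at $x_3$ with type $\ge k_3$, and the $(k_1,3)$-leader sits at $x_1$; hence the probability we want equals $\sum\mathrm{Prob}\{\M_C(t)=(x_1,x_2,x_3)\}$ over all $x_1<x_3<x_2$ (one may also impose $x_2\ge-k_1$, the remaining summands vanishing).

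Because $\mu=(x_1,x_2,x_3)$ with $x_2>x_3$ is not antidominant, the factorized formula \eqref{eq:antidominant} for $\varphi_\mu$ is not directly available; instead, since $(x_1,x_3,x_2)$ is antidominant, the exchange relation \eqref{eq:phi-exchange} with $i=2$ yields
\[
\varphi_{(x_1,x_2,x_3)}(w_1,w_2,w_3)=\frac{w_3(1+w_2)}{w_2-w_3}\,(1+w_1)^{-x_1}\Bigl((1+w_2)^{-x_3}(1+w_3)^{-x_2}-(1+w_3)^{-x_3}(1+w_2)^{-x_2}\Bigr),
\]
a difference of two factorized terms, which is exactly what produces the antisymmetric combination $v_2^{k_1-k_2}v_3^{k_1-k_3}-v_2^{k_1-k_3}v_3^{k_1-k_2}$ in the claimed answer. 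One substitutes this into the integral representation used in \eqref{eq:obs2-2-2var} (with $\varphi_\mu$ in place of the factorized product), carries out termwise the three geometric sums over $x_1$, $x_3$ (both down to $-\infty$) and $x_2$ (from $-k_1$ to $\infty$); then the prefactor $\tfrac{w_3-w_2}{w_2-w_3}=-1$ cancels, the two terms recombine via $\tfrac{1+w_2}{1-(1+w_1)(1+w_2)}-\tfrac{1+w_3}{1-(1+w_1)(1+w_3)}=\tfrac{w_2-w_3}{(1-(1+w_1)(1+w_2))(1-(1+w_1)(1+w_3))}$, and all powers of $1+w_1$ collapse. What remains is a triple contour integral whose $w_1$-integrand has a single pole $v_1=1/(v_2v_3)$ (coming from the factor $(1+w_1)(1+w_2)(1+w_3)-1$) inside the appropriate $w_1$-contour; computing this residue and passing to $v_i=1+w_i$ produces a double integral over $|v_2|=|v_3|=2$.

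The delicate point — and the place where the argument genuinely departs from the proof of Proposition~\ref{prop:2leaders-1} — is the choice of contours: the second of the two terms must now also be summed against $(1+w_3)^{-x_3}$, so convergence of all three geometric series forces $\tfrac{1}{|(1+w_2)(1+w_3)|}<|1+w_1|<\tfrac{1}{\max(|1+w_2|,|1+w_3|)}$ on the contours, which rules out the $|w_3|=2/\varepsilon$ loop used for Proposition~\ref{prop:2leaders-1} and calls instead for something like $|v_2|=|v_3|=2$, $|v_1|=\tfrac13$; with such a choice only the pole $v_1=1/(v_2v_3)$ is enclosed, the poles $v_1\in\{1,\,1/v_2,\,1/v_3\}$ lying outside. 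After that the work is purely algebraic: one evaluates the residue and then symmetrizes the resulting $v_2,v_3$-integrand under $v_2\leftrightarrow v_3$ (legitimate since the two contours agree and $e^{t(1/(v_2v_3)+v_2+v_3-3)}$ and $(v_2v_3^2-1)(v_2^2v_3-1)$ are symmetric) to bring it to the compact form in the statement. I expect this final reconciliation of the residue output with the stated expression, together with the justification of the interchange of summation and integration and of the contour deformations, to be the main bookkeeping obstacle.
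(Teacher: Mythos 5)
Your proposal is correct and follows the same skeleton as the paper's proof: the same choice of $C=(k_1,k_2,k_3)$ and of the non-antidominant $\mu$ (second coordinate largest), the same single application of the exchange relation \eqref{eq:phi-exchange} to write $\varphi_\mu$ as $\frac{w_3(1+w_2)}{w_2-w_3}$ times a difference of two factorized terms, the same three geometric summations, and the same single $w_1$-residue at $v_1=1/(v_2v_3)$. The one place where you genuinely diverge is exactly the point you flag: how to sum the second term of the antisymmetrized $\varphi_\mu$. The paper never confronts the convergence constraint $|(1+w_1)(1+w_3)|<1$ at all — it observes that the integrand, apart from the $k$-dependent powers of $(1+w_2)$ and $(1+w_3)$, is invariant under relabeling $w_2\leftrightarrow w_3$, so the second term (the $x_2\leftrightarrow x_3$ swap of the first) has the same integral as the first term with $k_2$ and $k_3$ interchanged; it then sums only the first term, reusing verbatim the contours $|w_1+1|=\varepsilon$, $|w_2|=2$, $|w_3|=2/\varepsilon$ from Proposition \ref{prop:2leaders-1}, and subtracts — which produces the antisymmetrized combination $v_2^{k_1-k_2}v_3^{k_1-k_3}-v_2^{k_1-k_3}v_3^{k_1-k_2}$ of the statement with no further algebra. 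Your route instead finds a common contour ($|v_1|=1/3$, $|v_2|=|v_3|=2$ does satisfy all four convergence constraints and encloses only the pole $v_1=1/(v_2v_3)$, the candidates $v_1\in\{1,1/v_2,1/v_3\}$ lying outside), recombines the two terms via the partial-fraction identity, and takes one residue. This is valid and avoids the paper's (somewhat loosely stated) symmetry observation, but it leaves you with a single term of the form $F(v_2,v_3)\,v_2^{k_1-k_2}v_3^{k_1-k_3}$, which matches the stated two-term answer only after a $v_2\leftrightarrow v_3$ relabeling of part of the integrand — the "final reconciliation" you anticipate is real, though purely mechanical. Both arguments are complete; the paper's is shorter because it recycles the computation of Proposition \ref{prop:2leaders-1} wholesale, while yours is more self-contained on the convergence and contour issues.
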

	
	\begin{proof}
		Let $C=(k_1, k_2,k_3)$, for $k_3>k_2>k_1$, and $\mu = (x_1,x_3,x_2)$, with $x_1<x_2<x_3$. In that case, again, $\M_C(0)= (-k_1, -k_2,-k_3)$. 
		
		By Corollary \ref{cor:integral}, we have (applying \eqref{eq:phi-exchange} once)
		\begin{multline}
			\label{eq:2leader-2obs}
			\mathrm{Prob}\{\M_C(t)=\mu\} =
			\frac{1}{(2 \pi \ii)^3} \oint \oint \oint \frac{(w_3-w_2)(w_3-w_1)(w_2-w_1)}{w_2 w_3^2} (1+w_1)^{-x_1} \\ \times \frac{\left( (1+w_2)^{-x_2} (1+w_3)^{-x_3} - (1+w_2)^{-x_3} (1+w_3)^{-x_2} \right) w_3 (1+w_2)}{w_2-w_3} \\ \times (1+w_1)^{-k_1-3} (1+w_2)^{-k_2-2} (1+w_3)^{-k_3-1} \exp \left( t(w_1+w_2+w_3) \right) d w_1 dw_2 dw_3.
		\end{multline}
		
		The event $\M_C(t)=\mu$ with such a choice of $C$ and $\mu$ can be rephrased as the event that the $k_1$-leader is at position $x_3$ and has type $\ge k_2$ but less than $k_3$, the $(k_1,2)$-leader is at position $x_2$ and has type $\ge k_3$, and the $(k_1,3)$-leader is at position $x_1$.
		In order to get the probability that we are interested in, we again need to sum this expression over the possible values of $x_1$, $x_2$, and $x_3$.
		
		Note that the integral in \eqref{eq:2leader-2obs} can be written as a difference of two terms, the second of these terms is obtained from the first one by swapping $x_2$ and $x_3$. Moreover, note that the integrand in \eqref{eq:2leader-2obs} is actually symmetric in $w_2$ and $w_3$. Therefore, the second term can be obtained from the first one also by swapping $k_2$ and $k_3$ (rather than $x_2$ and $x_3$), which is technically easier since we need to sum over $x_1$, $x_2$, and $x_3$. 
		
		Thus, we will perform the summations in the first of these terms and then use the symmetry to obtain the second one. However, the first term is very similar to \eqref{eq:obs2-2-2var}, and the summations in it can be done via exactly the same summation formulas and contour deformations as in Proposition \ref{prop:2leaders-1}. We obtain  
		
		\begin{multline*}
			\frac{1}{(2 \pi \ii)^2} \oint_{|v_2|=2} \oint_{|v_3|=2} \frac{(v_2 v_3^2-1)(v_2^2 v_3-1) v_2^{k_1-k_2} v_3^{k_1-k_3}}{ v_2^2 v_3 (v_2 v_3-1) (v_2-1) (v_3-1)^2 }  \exp \left( t \left( \frac{1}{v_2 v_3} + v_2 + v_3 - 3 \right) \right) d v_2 d v_3.
		\end{multline*}
		
		Subtracting the second term from this expression, we arrive at the statement of the proposition. 
		
	\end{proof}
	
	\begin{prop}
		\label{prop:number-changes-formula}
		The probability of the event that at a fixed time $t \in \R_{>0}$, there exists $x \in \Z$ such that the $0$-leader is at position $x$, the $(0,2)$-leader is at position $x+1$, and the type of the $(0,2)$-leader is larger than the type of the $0$-leader, is equal to 
		\begin{multline}
			\label{eq:number-changes-formula}
			\frac{1}{(2 \pi \ii)^2} \oint_{|v_2|=2} \oint_{|v_3|=2} \frac{(v_2-v_3)(v_2 v_3^2-1)(v_2^2 v_3-1)}{v_2 v_3^2 (v_2-1) (v_3-1)(v_2 v_3-1)^2}
			\exp \left( t \left( \frac{1}{v_2 v_3}+v_2+v_3 - 3 \right) \right) dv_2 dv_3.
		\end{multline}
	\end{prop}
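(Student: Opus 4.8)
The plan is to split the event by the type $c\ge 0$ of the $0$-leader. For $c\ge 0$ let $N_c$ denote the event that the $0$-leader has type exactly $c$, that the $(0,2)$-leader has type $\ge c+1$, and that the $0$-leader and the $(0,2)$-leader occupy neighbouring sites; since the $(0,2)$-leader always lies to the left of the $0$-leader, this last condition means the $(0,2)$-leader sits immediately to the left of the $0$-leader, so $\bigsqcup_{c\ge 0}N_c$ is exactly the event in the statement. It therefore suffices to compute $\sum_{c\ge 0}\mathrm{Prob}(N_c)$.

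For $c\ge 1$ the event $N_c$ is the event analysed in the proof of Proposition~\ref{prop:2leaders-2} with $C=(k_1,k_2,k_3)=(0,c,c+1)$ --- so that ``type $\in[k_2,k_3)$'' becomes ``type $=c$'' --- subject to the extra condition $x_2=x_3-1$, where $x_3$ is the position of the $0$-leader and $x_2$ that of the $(0,2)$-leader. I would therefore start from \eqref{eq:2leader-2obs} with these $k_i$, carry out the summation over $x_1\in(-\infty,x_2-1]$ exactly as in that proof, then \emph{substitute} $x_2=x_3-1$ instead of summing $x_2$, and finally sum $x_3$ over $\{0,1,2,\dots\}$ (the $0$-leader's position is always $\ge-k_1=0$, as used in Proposition~\ref{prop:2leaders-2}). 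The substitution collapses the difference $(1+w_2)^{-x_2}(1+w_3)^{-x_3}-(1+w_2)^{-x_3}(1+w_3)^{-x_2}$ to $(w_2-w_3)\bigl((1+w_2)(1+w_3)\bigr)^{-x_3}$, cancelling the factor $\tfrac1{w_2-w_3}$ coming from the single application of \eqref{eq:phi-exchange}; in this respect the computation is even simpler than the one in Proposition~\ref{prop:2leaders-2}. The $x_1$-summation produces the pole $\tfrac1{-w_1}$, and the $x_3$-summation, geometric in $(1+w_1)(1+w_2)(1+w_3)$, produces $\bigl((1+w_1)(1+w_2)(1+w_3)-1\bigr)^{-1}$; both steps are valid once the contours are deformed as in Proposition~\ref{prop:2leaders-2} (the small circle $|w_1+1|=\varepsilon$, then $|w_2|=2$, then $|w_3|=2/\varepsilon$). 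What remains is a triple contour integral whose $c$-dependence is through $(1+w_2)^{-c}(1+w_3)^{-c-1}$, and summing this geometric series over $c\ge1$ contributes the factor $\bigl((1+w_2)(1+w_3)-1\bigr)^{-1}$.

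Two things remain: taking one residue, and handling $c=0$. Switching to $v_i=1+w_i$ and taking the residue at the $v_1$-pole $v_1=1/(v_2v_3)$ --- the only $v_1$-pole enclosed, just as in Proposition~\ref{prop:2leaders-2} --- turns $\sum_{c\ge1}\mathrm{Prob}(N_c)$ into a double integral over $|v_2|=|v_3|=2$ with exponential $\exp\!\bigl(t(\tfrac1{v_2v_3}+v_2+v_3-3)\bigr)$, already of the shape of \eqref{eq:number-changes-formula}. The index $c=0$ cannot be fed into this computation (there $k_1=k_2$), so I would treat $N_0$ separately via Corollary~\ref{cor:integral} with the two colours $C=(0,1)$, using that $N_0=\bigsqcup_{x\ge0}\{\M_{(0,1)}(t)=(x,x-1)\}$ --- when the $0$-leader has type $0$ the $(0,2)$-leader is exactly the right-most particle of type $\ge1$ --- and running the parallel sequence of steps (one use of \eqref{eq:phi-exchange}, the adjacency substitution, the geometric sum over the $0$-leader's position). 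Adding this contribution and simplifying the rational prefactor should give \eqref{eq:number-changes-formula}. I expect this fusion to be the main obstacle: one must verify that the $c=0$ contribution together with the residue of the $c\ge1$ sum combine into the single prefactor appearing in \eqref{eq:number-changes-formula}. A clean route is to check that the three-colour formula, with its rational part evaluated formally at $c=0$, already reproduces $\mathrm{Prob}(N_0)$ (e.g.\ after extracting the $v_2$-residue at $v_2=1/v_3$ both expressions reduce to the same single integral); granting this, the sum over all $c\ge0$ is performed in one stroke. Beyond that, the standard care is needed at each summation to position the contours (via the $\varepsilon$-deformation of Proposition~\ref{prop:2leaders-2}) so only the intended poles contribute, and to keep track of signs and orientations.
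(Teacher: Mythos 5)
Your proposal follows essentially the same route as the paper: the paper's entire proof is ``substitute $k_1=0$ and $x_2=x_3-1$ into \eqref{eq:2leader-2obs} and perform the (omitted) summations,'' and your decomposition over the exact type $c$ of the $0$-leader, the collapse of the antisymmetrized factor under $x_2=x_3-1$, the geometric sums over $x_1$, $x_3$ and $c$, and the single $v_1$-residue are precisely those omitted summations. The one place you go beyond the paper is in observing that the $c=0$ term falls outside the admissible range $k_2>k_1$ of Proposition \ref{prop:2leaders-2} (for $C=(0,0,1)$ the initial value $\M_C(0)$ is no longer decreasing, so Corollary \ref{cor:integral} does not directly apply) and treating it separately via the two-colour observable $\M_{(0,1)}$ --- a legitimate boundary case that the paper's one-line proof silently absorbs into the geometric sum.
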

	
	\begin{proof}
		We start with the formula \eqref{eq:2leader-2obs}, which deals with the case when the $(0,2)$-leader is of larger type than the $0$-leader. In \eqref{eq:2leader-2obs}, we substitute $k_1=0$, $x_2=x_3-1$, and then perform summations, which are very similar to the previous cases (and therefore omitted). 
	\end{proof}

	\section{Large time asymptotics}
	
	\subsection{Type of leader}
	\label{sec:5-1}
	
	\begin{thm}
		\label{th:leader-type}
		Let $k \in \Z$ and $L_1 (t)$ be the type of the $k$-leader in the multi-type TASEP at time $t$. Then
		$$
		\frac{L_1(t) - k}{\sqrt{t}} \xrightarrow[t \to \infty]{} \frac{1}{\sqrt{\pi}} \exp \left( \frac{-a^2}{4} \right) \mathbf{1}_{a \ge 0} \, da, \qquad \mbox{in distribution}.
		$$
	\end{thm}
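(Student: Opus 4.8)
The plan is to extract the tail function of $L_1(t)$ from Proposition~\ref{prop:1-leader-formula}, evaluate the resulting one-variable contour integral in closed form, and then invoke the classical CLT. Fix $a\in\R$ and set $N=N(t):=\lceil a\sqrt t\rceil$. Since $L_1(t)-k$ is integer valued, $\{(L_1(t)-k)/\sqrt t\ge a\}=\{L_1(t)-k\ge N(t)\}$, so it suffices to control $\mathrm{Prob}(L_1(t)-k\ge N)$ for $N=N(t)$. For $a>0$ and $t$ large one has $N\ge1$, and Proposition~\ref{prop:1-leader-formula} with $k_1=k$, $k_2=k+N$ (note that $k$ itself drops out) gives
\[
\mathrm{Prob}(L_1(t)-k\ge N)=\frac{1}{2\pi\ii}\oint_{|w|=R>1}\frac{w+2}{w}\,(1+w)^{-N-1}\exp\!\Bigl(\tfrac{tw^2}{1+w}\Bigr)\,dw .
\]

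Next I would substitute $z=1+w$, which sends the contour to a positively oriented circle $|z-1|=R$ enclosing both $z=0$ and $z=1$, and uses $w^2/(1+w)=z+z^{-1}-2$ and $(w+2)/w=(z+1)/(z-1)$, turning the integral into $\frac{1}{2\pi\ii}\oint_{|z-1|=R}\frac{z+1}{z-1}\,z^{-N-1}e^{t(z+z^{-1}-2)}\,dz$. Splitting $\frac{z+1}{z-1}=1+\frac{2}{z-1}$ and using the generating identity $e^{t(z+z^{-1})}=\sum_{m\in\Z}I_m(2t)z^m$ (Bessel functions $I_m$), one evaluates the two pieces by residues: the piece with the $1$, together with the part of the $\tfrac{2}{z-1}$-piece coming from the essential singularity at $z=0$ (where $(z-1)^{-1}=-\sum_{j\ge0}z^j$ for $|z|<1$), contribute the $z=0$ Laurent coefficients, while the remaining contribution is the simple residue at $z=1$, equal to $2$. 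Recalling that $\{e^{-2t}I_m(2t)\}_{m\in\Z}$ is exactly the law of $D_t:=P_t-Q_t$ with $P_t,Q_t$ independent $\mathrm{Poisson}(t)$ (so $\mathrm{Var}(D_t)=2t$), the bookkeeping collapses to
\[
\mathrm{Prob}(L_1(t)-k\ge N)=\mathrm{Prob}(D_t\ge N)+\mathrm{Prob}(D_t>N).
\]
A sanity check: at $N=0$ the right side equals $1$, matching the obvious fact that the $k$-leader always has type $\ge k$; this also disposes of the case $a\le0$, where the probability in the theorem is identically $1$.

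Finally I would apply the central limit theorem $D_t/\sqrt{2t}\Rightarrow\mathcal N(0,1)$. Since $N(t)/\sqrt{2t}\to a/\sqrt2$ and $a/\sqrt2$ is a continuity point of the Gaussian law, both $\mathrm{Prob}(D_t\ge N(t))$ and $\mathrm{Prob}(D_t>N(t))$ converge to $1-\Phi(a/\sqrt2)$; their difference $\mathrm{Prob}(D_t=N(t))$ tends to $0$ (local CLT, or simply $\mathrm{Prob}(D_t=N(t))\le\mathrm{Prob}(N(t)-\tfrac12\le D_t\le N(t)+\tfrac12)\to0$). Hence, for $a>0$,
\[
\mathrm{Prob}\!\Bigl(\tfrac{L_1(t)-k}{\sqrt t}\ge a\Bigr)\xrightarrow[t\to\infty]{}2\bigl(1-\Phi(a/\sqrt2)\bigr)=\frac{2}{\sqrt\pi}\int_{a/2}^\infty e^{-v^2}\,dv=\int_a^\infty\frac{1}{\sqrt\pi}e^{-b^2/4}\,db,
\]
which is precisely the tail of the claimed limiting measure; for $a\le0$ both sides equal $1$. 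Since that limiting measure has a continuous distribution function, convergence of tails at every $a\in\R$ is equivalent to convergence in distribution, which finishes the proof. The one genuinely substantive step is the second one --- collapsing the contour integral of Proposition~\ref{prop:1-leader-formula} to a Skellam tail; it is elementary but requires care with the essential singularity at $z=0$, with the two residue contributions after splitting $(z+1)/(z-1)$, and with the exponent $-N-1$ (rather than $-N$), which is exactly what produces the combination $\mathrm{Prob}(D_t\ge N)+\mathrm{Prob}(D_t>N)$ instead of a single Gaussian tail. Everything after that is a routine CLT computation.
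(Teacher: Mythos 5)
Your proposal is correct, and it reaches the stated limit by a genuinely different route than the paper. The paper treats the integral of Proposition \ref{prop:1-leader-formula} asymptotically: it deforms the contour through the saddle point $w=0$ of $w^2/(1+w)$, rescales $w=t^{-1/2}u$, and reads off the error-function tail $1-\mathrm{erf}(a/2)$ by steepest descent. You instead evaluate the integral \emph{exactly} for every finite $t$: after $z=1+w$ the kernel becomes $e^{t(z+z^{-1}-2)}$, whose Laurent coefficients are the Skellam probabilities $e^{-2t}I_m(2t)=\mathrm{Prob}(D_t=m)$, and the residue bookkeeping at the essential singularity $z=0$ and the simple pole $z=1$ (I checked: piece one gives $\mathrm{Prob}(D_t=N)$, the $2/(z-1)$ piece gives $2-2\,\mathrm{Prob}(D_t\le N)$, so the total is $\mathrm{Prob}(D_t\ge N)+\mathrm{Prob}(D_t>N)$, consistent with the value $1$ at $N=0$) yields a clean pre-limit identity. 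The classical CLT for $D_t/\sqrt{2t}$ then gives $2(1-\Phi(a/\sqrt2))=1-\mathrm{erf}(a/2)$, matching the paper's \eqref{eq:1-LeaderAsymp-v3}. What each approach buys: yours is more elementary (no contour deformation estimates, no Taylor-remainder control), produces an exact finite-$t$ formula with a probabilistic meaning, and even supplies a rate of convergence for free via Berry--Esseen if desired; the paper's steepest-descent argument is the one that scales to the genuinely multidimensional integrals needed for Theorems \ref{th:leader-conditioned-asymp}, \ref{th:joint}, and \ref{th:leader-changes}, where no such exact probabilistic identification is available. Both are complete proofs of Theorem \ref{th:leader-type}.
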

	\begin{proof}
		
		In order to obtain the asymptotics of integrals, we apply (here and also in all subsequent sections) the well-known method of the steepest descent. We refer to, e.g., \cite{deBruijn-AMA} for the basics of this method.
		
		We start from the formula in the statement of Proposition \ref{prop:1-leader-formula}. Let us consider the sequence $k_2 = k_2 (t)$ such that $k_1 - k_2 -1 = - \lfloor a \sqrt{t} \rfloor$, for $a>0$. In such a case, the part of the integrand in \eqref{eq:LeaderAsymp1v2} that depends on $t$ can be written as
		\begin{equation}
			\label{eq:descentFunc-1}
			\exp \left( t \left( \frac{w_2^2}{1+w_2} - \frac{a}{\sqrt{t}} \ln (1+w_2) \right) \right).
		\end{equation}
		The function in the exponent has a unique saddle point at $w_2=0$; we deform the contour of integration as shown in Figure \ref{fig:steepest-1}. 
		
		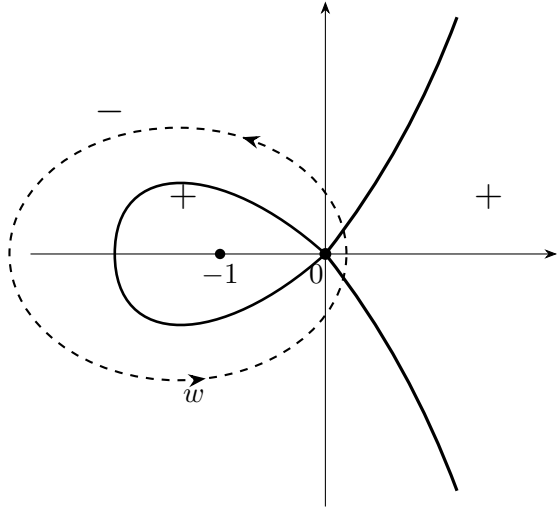
\begin{figure}[ht]
			\centering
			\begin{tikzpicture}[scale=1.4,>=Stealth]
				
				% axes
				\draw[->] (-2.8,0) -- (2.2,0);
				\draw[->] (0,-2.4) -- (0,2.4);
				
				% origin
				\fill (0,0) circle (1.6pt);
				\node[below left] at (0.08,0) {$0$};
				
				% "tear-drop" loop touching the origin
				\draw[line width=1.0pt]
				(0,0) .. controls (-1,0.9) and (-2,0.9) .. (-2,0)
				.. controls (-2,-0.9) and (-1,-0.9) .. (0,0);
				
				% point at -1
				\fill (-1,0) circle (1.4pt);
				\node[below] at (-1,0) {$-1$};
				
				% dashed outer contour with arrows
				\draw[dashed,line width=0.8pt,
				postaction={decorate},
				decoration={markings,
					mark=at position 0.18 with {\arrow{>}},
					mark=at position 0.78 with {\arrow{>}}}]
				(-1.4,0) ellipse (1.6 and 1.2);
				
				% labels
				\node at (-1.35,0.55) {\Large $+$};
				\node at (-2.05,1.35) {\Large $-$};
				\node at (1.55,0.55) {\Large $+$};
				\node at (-1.25,-1.35) {$w$};
				
				% two branches to the right of the origin
				\draw[line width=1.2pt]
				(0,0) .. controls (0.55,0.7) and (0.95,1.45) .. (1.25,2.25);
				\draw[line width=1.2pt]
				(0,0) .. controls (0.55,-0.7) and (0.95,-1.45) .. (1.25,-2.25);
				
			\end{tikzpicture}
			\caption{Solid line: $\Re \left( \frac{w^2}{1+w} \right)=0$, the regions of positive and negative real part of this function are also depicted. We deform (without encountering any poles) the contour of integration to the dashed line, which lies almost entirely in the region with negarive real part, and only in an (infinitesimal, of order $t^{-1/2}$) neighborhood of $w=0$ the values of $\Re \left( \frac{w^2}{1+w} \right)$ on the contour are close to 0. }
			\label{fig:steepest-1}
		\end{figure}
		
		After that, we make a change of variables $w_2 = t^{-1/2} u$, and use
		the Taylor expansion
		$$
		\frac{t w_2^2}{1+w_2} - \sqrt{t} a \log \left( 1 + w_2 \right) = u^2 - au + O \left( t^{-1/2} \right).
		$$
		After such a deformation of contours, by standard for the steepest descent method estimates, we obtain that the asymptotics of the integral is dominated by a neighborhood of the saddle point, and it is given by 
		
		\begin{multline}
			\label{eq:1-LeaderAsymp-v3}
			\mathrm{Prob} \left( \mbox{$(k_1+1)$-leader is of type $\ge k_1 + \lfloor a \sqrt{t} \rfloor$} \right) \\ = \frac{1}{2 \pi \ii} \int_{\Im (w_2) =  \eps}
			\frac{2}{u} \exp \left( u^2 - au \right) du + O \left( t^{-1/2} \right) = 1 - \mathrm{erf} \left( \frac{a}{2} \right) + O \left( t^{-1/2} \right),
		\end{multline}
		where $0 < \eps \ll 1$, and $\mathrm{erf}$ is the Gauss error function. Note that \eqref{eq:1-LeaderAsymp-v3} varies from 1 to 0 as $a$ ranges from $0$ to $+\infty$, which shows that we chose the correct fluctuation scale. 
		
		Differentiating equality \eqref{eq:1-LeaderAsymp-v3} in $a$, we arrive at the statement of the theorem.
	\end{proof}
	
	\subsection{Type of leader conditioned on position}
	
	\begin{thm}
		\label{th:leader-conditioned-asymp}
		Let $k \in \Z$, $L_1 (t)$ be the type of the $k$-leader in the multi-type TASEP at time $t$, let $X_1 (t)$ be the position of the $k$-leader, and let $x \in \R$. We have
		\begin{multline}
			\lim_{t \to \infty} \mathrm{Law} \left( \frac{L_1 (t)-k}{\sqrt{t}} \Biggl| X_1 (t) = \lfloor t + x \sqrt{t} \rfloor \right) = \frac{1}{\sqrt{2 \pi}} \exp \left( \frac{-(x+a)^2}{2} \right) 
			\\ \times \left( 1 + \frac{\sqrt{\pi}}{2} \left( 1 + \mathrm{erf} \left( \frac{x}{\sqrt{2}} \right) \right) \cdot \exp \left( \frac{x^2}{2} \right) \cdot \left( a+x \right) \right) \mathbf{1}_{a \ge 0} \, da, \qquad \mbox{in distribution},
		\end{multline}
		where in the left-hand side we consider the conditional distribution of the type of the leader (note that according to the classical central limit theorem, $\lfloor t + x \sqrt{t} \rfloor$ is a typical value for the position of the leader). 
	\end{thm}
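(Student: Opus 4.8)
The plan is to start from the two-variable integral representation in Proposition~\ref{prop:leader-position}, which gives
$$
\mathrm{Prob}\bigl(\text{$k$-leader is of type $\ge k_2$ and at position $x$}\bigr)
$$
as a double contour integral, and then compare its asymptotics against the asymptotics of the single-variable formula for the unconditioned position $\mathrm{Prob}(X_1(t)=x)$ (which one extracts from the same family of formulas by taking $k_2\to-\infty$, or equivalently by a one-variable steepest-descent computation that recovers the classical CLT, $X_1(t)\approx t+\sqrt t\,\mathcal N(0,1)$). The conditional law we want is the ratio
$$
\mathrm{Law}\!\left(\tfrac{L_1(t)-k}{\sqrt t}\;\Big|\;X_1(t)=\lfloor t+x\sqrt t\rfloor\right)
=\frac{\partial_a\,\mathrm{Prob}\bigl(\text{$k$-leader of type $\ge k+\lfloor a\sqrt t\rfloor$ and at $\lfloor t+x\sqrt t\rfloor$}\bigr)}{\mathrm{Prob}\bigl(X_1(t)=\lfloor t+x\sqrt t\rfloor\bigr)},
$$
so the whole problem reduces to a careful two-variable saddle-point analysis of the numerator, followed by dividing by the (Gaussian) normalization and differentiating in $a$.

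Concretely, I would first plug $k_1=k$, $k_2-k=\lfloor a\sqrt t\rfloor$, $x=\lfloor t+x\sqrt t\rfloor$ into \eqref{eq:LeaderAsymp1v2} and isolate the $t$-dependent exponential factor in each variable: after the change of variables $w_i\mapsto w_i$ (keeping the same parametrization as in the earlier proofs) the exponent in $w_1$ is $t\bigl(w_1-(1+x/\sqrt t)\log(1+w_1)\bigr)$ and the exponent in $w_2$ is $t\bigl(w_2-(1+(a+x)/\sqrt t)\log(1+w_2)\bigr)$ — i.e. each variable carries the "position" exponent $w-\log(1+w)$ (which has its saddle at $w=0$, matching $X_1\approx t$) perturbed by a $t^{-1/2}$-order linear-in-$\log$ term. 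The key new feature relative to Theorem~\ref{th:leader-type} is that here both contours must be pinched near $w_i=0$ and, crucially, the factor $\tfrac{w_1-w_2}{w_1w_2}$ together with the extra pole of the integrand in $w_2$ at $w_2=0$ (explicitly included inside the $w_2$-contour per the statement of Proposition~\ref{prop:leader-position}) contributes at leading order. So after rescaling $w_i=t^{-1/2}u_i$ and Taylor-expanding, the numerator becomes
$$
\frac{1}{(2\pi\ii)^2}\int\!\!\int \frac{u_1-u_2}{u_1 u_2}\,e^{\frac12 u_1^2-x u_1}\,e^{\frac12 u_2^2-(a+x)u_2}\,du_1\,du_2\;+\;o(1),
$$
where the $u_1$-contour is a vertical line $\Re u_1=\eps>0$ (steepest descent through $0$ with the simple pole of $1/u_1$ kept to the right) and the $u_2$-contour is a vertical line $\Re u_2=\eps'>0$ chosen so that the pole at $u_2=0$ — the rescaled version of the $w_2=0$ pole explicitly enclosed — sits to its left. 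The $\tfrac{u_1-u_2}{u_1u_2}=\tfrac1{u_2}-\tfrac1{u_1}$ split then produces a product of one-dimensional Gaussian-type integrals: integrals of $e^{\frac12 u^2-\beta u}/u$ over a vertical line give (up to the appropriate $\pm$ and the sign of $\eps$) expressions of the form $\sqrt{2\pi}\cdot\tfrac12(1\pm\mathrm{erf}(\beta/\sqrt2))\,e^{\beta^2/2}$ or, when the pole is on the other side, the pure Gaussian $\tfrac{1}{\sqrt{2\pi}}e^{-\beta^2/2}$ after differentiation; assembling the four resulting pieces with $\beta=x$ and $\beta=a+x$ is exactly what yields the prefactor $\tfrac{1}{\sqrt{2\pi}}e^{-(x+a)^2/2}\bigl(1+\tfrac{\sqrt\pi}{2}(1+\mathrm{erf}(x/\sqrt2))e^{x^2/2}(a+x)\bigr)$ after dividing by $\mathrm{Prob}(X_1(t)=\lfloor t+x\sqrt t\rfloor)\sim \tfrac{1}{\sqrt t}\cdot\tfrac{1}{\sqrt{2\pi}}e^{-x^2/2}$ and differentiating in $a$. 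Finally one checks the result integrates to $1$ over $a\ge0$, confirming the normalization.

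The main obstacle, I expect, is the bookkeeping of the steepest-descent contours in the two coupled variables: unlike the single-variable case one must simultaneously (i) pinch both $w_i$-contours to vertical lines through the common saddle at $0$, (ii) respect the ordering constraint from Proposition~\ref{prop:leader-position} that forces the $w_2=0$ pole inside the $w_2$-contour but (implicitly) the $w_1$-pole structure to be arranged compatibly, and (iii) track which of the poles at $u_i=0$ (arising from $1/w_1$, $1/w_2$, and the deformation) end up on which side of the rescaled vertical lines — each "wrong side" choice changes an $\mathrm{erf}$-type answer into a pure-Gaussian one, and getting these signs right is precisely what distinguishes the somewhat intricate prefactor here from the cleaner $1-\mathrm{erf}(a/2)$ of Theorem~\ref{th:leader-type}. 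A secondary technical point is justifying that the error terms are genuinely $o(1)$ uniformly enough in $a$ (on compacts, and with integrable tails) to pass from convergence of the conditional CDFs to convergence in distribution and to legitimately differentiate in $a$; this is routine given the exponential decay $\Re(w-\log(1+w))<0$ away from the saddle, but needs to be stated.
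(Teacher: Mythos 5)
Your proposal is correct and follows essentially the same route as the paper: start from Proposition~\ref{prop:leader-position} with $k_2-k=\lfloor a\sqrt t\rfloor$ and position $\lfloor t+x\sqrt t\rfloor$, do a two-variable steepest descent at the common saddle $w_1=w_2=0$ with the rescaling $w_i=u_i/\sqrt t$, split $\tfrac{u_1-u_2}{u_1u_2}=\tfrac1{u_2}-\tfrac1{u_1}$, evaluate the resulting one-dimensional Gaussian integrals with a simple pole (the paper does this via Sokhotski--Plemelj after rotating the contours, which is the same erf-versus-Gaussian pole-side bookkeeping you describe), then divide by the CLT asymptotics of $\mathrm{Prob}(X_1(t)=\lfloor t+x\sqrt t\rfloor)$ and differentiate in $a$. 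The contour-side issue you flag as the main obstacle is indeed exactly what the paper resolves by placing the two rescaled contours on opposite sides of the origin, reflecting which poles Proposition~\ref{prop:leader-position} encloses.
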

	
	\begin{proof}
		We analyze the formula given by Proposition \ref{prop:leader-position} with $k_1 :=k$, $k_2 := \lfloor a \sqrt{t} \rfloor$, and $x := x \sqrt{t}$, where in the left-hand sides we use the notation from Proposition \ref{prop:leader-position}, and in the right-hand sides -- the notation from the statement of the current proposition. We apply the steepest descent method, and deform both contours in the same way as in Section \ref{sec:5-1}. After that, the principal contribution to the integral is given by a neighborhood of the critical point $w_1=w_2=0$, and we change variables $\sqrt{t} w_i =:v_i$, for $i=1,2$, in order to compute it. Using the Taylor expansion, we obtain
		
		\begin{multline}
			\mathrm{Prob} \left( \mbox{$k$-leader is of type $\ge k +a \sqrt{t}$ and is located at $\lfloor t + x \sqrt{t} \rfloor$} \right) \\ = \frac{1}{\sqrt{t} \cdot 2 \pi \ii} \int_{\Im (v_1) = - \eps } d v_1 \int_{\Im (v_2)= \eps } d v_2 
			\left( \frac{1}{v_2} - \frac{1}{v_1} \right) \exp \left( \frac{v_1^2}{2} - \ii x v_1 + \frac{v_2^2}{2} - (x+a) v_2 \right) + O \left( \frac{1}{t} \right).
		\end{multline} 
		Making a change of variable $y_i = \ii v_i$, for $i=1,2$, we obtain that the expression above is asymptotically equivalent to 
		\begin{equation}
			\label{eq:asympt-lead-cond}
			\frac{1}{\sqrt{t} \cdot 2 \pi \ii} \int_{\R + \ii \eps } d y_1 \int_{\R - \ii \eps } d y_2 
			\left( \frac{1}{y_1} - \frac{1}{y_2} \right) \exp \left( \frac{-y_1^2}{2} - \ii x y_1 + \frac{-y_2^2}{2} - \ii (x+a) y_2 \right).
		\end{equation}
		In order to compute this integral, we use the following table integrals and Sokhotski-Plemelj theorem:
		$$
		\mathrm{p.v.} \int_{-\infty}^{+\infty} \frac{1}{s} \exp \left( \frac{-s^2}{2} + \ii s x \right) ds
		= \ii \cdot \pi \cdot \mathrm{erf}\left( \frac{x}{\sqrt{2}} \right),
		$$
		$$
		\int_{-\infty}^{+\infty} \exp \left( \frac{-y^2}{2} - \ii y x \right) dy = \sqrt{2 \pi} \exp \left( - \frac{x^2}{2} \right),
		$$
		$$
		\lim_{\eps \to 0} \int \frac{f(x)}{x \pm \ii \eps} dx = \mp \ii \pi f(0) + \mathrm{p.v.} \int_{-\infty}^{+\infty} \frac{f(x)}{x} dx. 
		$$
		Plugging in these expressions, we obtain that \eqref{eq:asympt-lead-cond} is equal to
		\begin{equation}
			\label{eq:dddd}
			\frac{1}{\sqrt{2 \pi t}} \exp \left( \frac{-x^2}{2} \right) \cdot \frac{1}{2} \left( 1 + \mathrm{erf} \left( \frac{-x-a}{\sqrt{2}} \right) + \left( 1 - \mathrm{erf} \left( \frac{-x}{\sqrt{2}} \right) \right) \exp \left( - ax - \frac{a^2}{2} \right) \right).
		\end{equation}
		By the classical Central Limit Theorem, the factor $\frac{1}{\sqrt{2 \pi t}} \exp \left( \frac{-x^2}{2} \right)$ is asymptotically equivalent to $\mathrm{Prob} \left( X_1 (t) = \lfloor t + x \sqrt{t} \rfloor \right)$. Therefore, the rest of \eqref{eq:dddd} corresponds to the conditional law of the type of the leader. Differentiating it with respect to $a$, we arrive at the statement of the proposition. 
	\end{proof}
	
	\begin{rmk}
		In Theorem \ref{th:leader-conditioned-asymp} we condition on a typical position $t + x \sqrt{t}$ of the leader. It is quite natural to consider the conditioning on atypical positions of the leader as well, namely, for $X_1 = \lfloor A t \rfloor$, with $A >1$ (atypically fast first particle in TASEP) and $A<1$ (atypically slow first particle in TASEP). In the former case, the type of the leader minus $k$ will have finite (i.e., not growing with $t$) value and it is possible to show that asymptotically, as $t \to \infty$, it will be geometric with rate $A^{-1}$. In the latter case, the type of the leader will grow as $(1-A)t$ and will have fluctuations of order $\sqrt{t}$. We do not provide the details of these calculations since they are very similar to the ones provided in this paper. 
	\end{rmk}

	\subsection{Joint distribution of two leaders}
	
	\begin{thm}
		\label{th:joint}
		Let us fix $k_1 \in \Z$. Let $L_1 (t), L_2 (t) $ be the types of the $(k_1,1)$- and $(k_1,2)$-leaders, respectively, at time $t$. We have the following convergence in distribution:
		\begin{equation}
			\lim_{t \to \infty} \left( \frac{L_1(t) - k_1}{\sqrt{t}}, \frac{L_2(t) - k_1}{\sqrt{t}} \right) = G (a_2, a_3) d a_2 d a_3,
		\end{equation}
		where the limiting density $G (a_2, a_3)$ is given via
		$$
		G (a_2, a_3) :=
		\begin{cases}
			d_1 (a_2, a_3), \qquad \mbox{for $a_2 > a_3$,} \\
			d_2 (a_2,a_3), \qquad \mbox{for $a_2 \le a_3$,}
		\end{cases}
		$$
		and
		\begin{multline}
			\label{eq:density2a}
			d_1 (a_2,a_3) :=
			\frac{-1}{16 \pi} \left( \left( 4 \cdot \mathrm{erf} \left( \frac{\sqrt{3}}{6} (a_2 +a_3) \right) -4 \right) \exp \left( \frac{(a_2-a_3)^2}{4} \right) \sqrt{\pi} (a_2 - a_3)
			\right. \\ \left. + 2 \sqrt{3} \left( a_2^2 -6 \right) \exp \left( \frac{-a_2^2+a_2 a_3 - a_3^2}{3} \right)
			\right. \\ \left. + \sqrt{\pi} \exp \left( \frac{-a_2^2}{4} \right) \left(  \mathrm{erf} \left( \frac{\sqrt{3}}{6} (a_2 - 2 a_3) \right) +1 \right) \left( - 4 a_2 +a_2^3 - 2 a_2^2 a_3 + 4 a_3 \right) \right. ,
		\end{multline}
		\begin{multline*}
			d_2 (a_2,a_3) := \frac{1}{4 \sqrt{\pi}} \left( - \exp \left( -\frac14 (a_2-a_3)^2 \right) (a_2 - a_3) \left( \mathrm{erf} \left( \frac{\sqrt{3}}{6} (a_2+a_3) \right) -1 \right)
			\right. \\ \left. + a_3 \exp \left( \frac{-a_3^2}{4} \right) \mathrm{erf} \left( \frac{\sqrt{3}}{6} (2 a_2-a_3) \right) +a_2 \exp \left( \frac{-a_2^2}{4} \right) \mathrm{erf} \left( \frac{\sqrt{3}}{6} ( a_2- 2 a_3) \right)
			\right. \\ \left. - a_3 \exp \left( \frac{-a_3^2}{4} \right) + a_2 \exp \left( \frac{-a_2^2}{4} \right) \right).
		\end{multline*}
		
	\end{thm}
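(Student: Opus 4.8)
\textbf{Proof proposal for Theorem \ref{th:joint}.}

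The plan is to run the same steepest-descent machinery used in Section \ref{sec:5-1} and in Theorem \ref{th:leader-conditioned-asymp}, but now starting from the two-leader formulas of Propositions \ref{prop:2leaders-1} and \ref{prop:2leaders-2}. The joint tail event that $L_1(t)\ge k_1+a_2\sqrt t$ and $L_2(t)\ge k_1+a_3\sqrt t$ decomposes according to which of $a_2,a_3$ is larger, because $L_1(t)\ge L_2(t)$ always: when $a_2>a_3$ the relevant cumulative probability is $\mathrm{Prob}(\text{$k_1$-leader type}\ge k_1+a_2\sqrt t,\ (k_1,2)\text{-leader type}\ge k_1+a_3\sqrt t)$, which is exactly Proposition \ref{prop:2leaders-1} with $k_2-k_1\sim a_3\sqrt t$, $k_3-k_1\sim a_2\sqrt t$; when $a_2\le a_3$ one must instead combine Proposition \ref{prop:2leaders-1} with the ``type in $[k_2;k_3)$'' contribution of Proposition \ref{prop:2leaders-2} (or, equivalently, directly express the event $\{L_1\ge k_1+a_2\sqrt t,\ L_2\ge k_1+a_3\sqrt t\}$ with $a_2\le a_3$ as the $a_3$-level cumulative for the $(k_1,2)$-leader minus the part where $L_1<k_1+a_3\sqrt t$). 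The first step is therefore bookkeeping: write the two cumulative distribution functions $F(a_2,a_3)$ on each of the two regions as explicit double contour integrals with the substitutions $v_2^{k_1-k_2}\to v_2^{-\lfloor a_3\sqrt t\rfloor}$, $v_3^{k_1-k_3}\to v_3^{-\lfloor a_2\sqrt t\rfloor}$ (and with $k_2,k_3$ swapped appropriately).

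Next I would perform the asymptotic analysis. In both Proposition \ref{prop:2leaders-1} and \ref{prop:2leaders-2} the $t$-dependent part of the integrand is $\exp\bigl(t(\tfrac1{v_2v_3}+v_2+v_3-3)\bigr)\cdot v_2^{-\lfloor a_3\sqrt t\rfloor}v_3^{-\lfloor a_2\sqrt t\rfloor}$, i.e.\ $\exp\bigl(t\,S(v_2,v_3)-\sqrt t(a_3\log v_2+a_2\log v_3)\bigr)$ with $S(v_2,v_3)=\tfrac1{v_2v_3}+v_2+v_3-3$. The function $S$ has a saddle at $v_2=v_3=1$ with $S(1,1)=0$, and near it $S(1+\tfrac{u_2}{\sqrt t},1+\tfrac{u_3}{\sqrt t})=\tfrac1t\bigl(u_2^2+u_2u_3+u_3^2\bigr)+O(t^{-3/2})$; the linear $\sqrt t$-term contributes $-(a_3 u_2+a_2 u_3)$. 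So after the substitution $v_i=1+u_i/\sqrt t$ and deforming the two contours through $v_2=v_3=1$ exactly as in Figure \ref{fig:steepest-1} (the prefactor $\dfrac{(v_3-v_2)(v_2v_3^2-1)(v_2^2v_3-1)}{v_2^2v_3(v_2v_3-1)(v_2-1)(v_3-1)^3}$ scales homogeneously — each factor $v_i-1$, $v_2v_3-1$, $v_3-v_2$ contributes a power of $t^{-1/2}$, and the net power is arranged to cancel the $1/t$ from $dv_2\,dv_3$ so that the limit is finite), the cumulative function converges to a Gaussian double integral of the form
\[
\frac{1}{(2\pi\ii)^2}\int\!\!\int \frac{P(u_2,u_3)}{Q(u_2,u_3)}\,
\exp\!\bigl(u_2^2+u_2u_3+u_3^2-a_3u_2-a_2u_3\bigr)\,du_2\,du_3,
\]
with $P/Q$ the rational function obtained from the scaling limit of the prefactor (a sum of simple terms $1/u_2$, $1/u_3$, $1/(u_2+u_3)$, $u_2$, $u_3$, constants). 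Rotating $u_i\mapsto \ii y_i$ turns the quadratic form into a negative-definite one, and the resulting one-dimensional integrals are precisely the table integrals already used in the proof of Theorem \ref{th:leader-conditioned-asymp} (a Gaussian, and a principal-value integral $\mathrm{p.v.}\int s^{-1}e^{-s^2/2+\ii sx}ds=\ii\pi\,\mathrm{erf}(x/\sqrt2)$), together with the Sokhotski--Plemelj identity for the $1/(y\pm\ii\eps)$ pieces; the only new feature is the off-diagonal term $u_2u_3$ in the exponent, which after completing the square turns the two $y$-integrations into iterated one-dimensional ones with shifted arguments, producing the nested $\mathrm{erf}\bigl(\tfrac{\sqrt3}{6}(\cdots)\bigr)$'s and the cross-exponentials $\exp\bigl(\tfrac{-a_2^2+a_2a_3-a_3^2}{3}\bigr)$ visible in \eqref{eq:density2a}. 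Finally I would differentiate the limiting cumulative distribution function $F(a_2,a_3)$ twice, $\partial^2/\partial a_2\partial a_3$, on each of the two regions $a_2>a_3$ and $a_2\le a_3$ to obtain $d_1$ and $d_2$.

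The main obstacle I expect is \textbf{the contour bookkeeping and the matching of the prefactor's scaling}, not the saddle-point estimate itself. Concretely: (i) in Proposition \ref{prop:2leaders-1} the triple-cube $(v_3-1)^3$ in the denominator means one must expand the prefactor to sufficient order in $u_i$ near the saddle — the naive leading term may vanish, and one needs the next term, so the ``correct fluctuation scale'' check (analogous to the remark after \eqref{eq:1-LeaderAsymp-v3}) has to be done carefully to be sure the $t$-powers balance; (ii) on the region $a_2\le a_3$ one must correctly assemble the answer from Propositions \ref{prop:2leaders-1} and \ref{prop:2leaders-2}, keeping track of which contour (small circle around the relevant pole vs.\ large circle) each variable lives on, since the residue computations in those propositions fixed specific contour configurations; and (iii) after the rotation $u_i=\ii y_i$ the integrand has poles on the real axis (from the $1/u_2$, $1/u_3$, $1/(u_2+u_3)$ terms), so one must push the contours slightly off-axis in a consistent way and apply Sokhotski--Plemelj — exactly the $\pm\ii\eps$ prescription already appearing in \eqref{eq:asympt-lead-cond}--\eqref{eq:dddd} — but now in two variables, where the order of the two Plemelj limits has to be checked to be immaterial. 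Once these are handled, evaluating the Gaussian/error-function integrals and carrying out the final double differentiation is routine (if lengthy) algebra, yielding $d_1$ and $d_2$ as stated.
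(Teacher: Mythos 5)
Your plan coincides with the paper's proof: analyze Propositions \ref{prop:2leaders-1} and \ref{prop:2leaders-2} separately for the two orderings of the leaders' types, deform to contours through the saddle $v_2=v_3=1$, rescale $v_i=1+u_i/\sqrt t$ to get a Gaussian integral with quadratic form $u_2^2+u_2u_3+u_3^2$, and then differentiate the limiting cumulative function in $a_2$ and $a_3$; your power-of-$t$ bookkeeping (three numerator factors, five denominator factors, two from the Jacobian) matches the paper's, and your worry (i) about a vanishing leading term does not materialize here (it does in Theorem \ref{th:leader-changes}, not in this one). The one genuine error is your justification of the decomposition: it is \emph{not} true that $L_1(t)\ge L_2(t)$ always --- the positions of the $(k_1,1)$- and $(k_1,2)$-leaders are ordered, but their types are not (already at $t=0$ the $(k_1,2)$-leader has the larger type, and Remark \ref{rem:permut} states each ordering has asymptotic probability $1/2$); if your claim were true, the density on $a_2\le a_3$ would vanish, contradicting the statement and your own plan to compute a nontrivial $d_2$ there. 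The correct reason for the two-case split is simply that Proposition \ref{prop:2leaders-1} gives the joint survival function only on the region where the first leader's threshold exceeds the second's, so the complementary region $\{L_1<L_2\}$ must be accessed through the event of Proposition \ref{prop:2leaders-2}, which is exactly what the paper (and, operationally, you) do.
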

	
	\begin{rmk}
		\label{rem:permut}
		By a direct integration, it follows from Theorem \ref{th:joint} that
		$$
		\lim_{t \to \infty} \mathrm{Prob} \left( L_1(t) > L_2 (t) \right) = 
		\lim_{t \to \infty} \mathrm{Prob} \left( L_1(t) < L_2 (t) \right) = \frac12. 
		$$
		We are unaware of a conceptual explanation or a shorter proof of this equality; it would be interesting to understand the origin of this fact better. A naive generalization for the ordering of types of more than two leaders --- that they always form a uniformly random permutation --- is false. 
	\end{rmk}
	
	\begin{proof}
		
		\textbf{First step}. 
		
		\smallskip
		
		Choose the sequences $k_2 = k_2(t)$, $k_3 = k_3 (t)$ so that $k_1 - k_2 = - \lfloor a_2 \sqrt{t} \rfloor$, $k_1 - k_3 =- \lfloor a_3 \sqrt{t} \rfloor$ (and $k_1$ is fixed), and
		let us perform the asymptotic analysis of the expression given by Proposition \ref{prop:2leaders-1} via the steepest descent method, as $t \to \infty$. 
		
		The integrand of \eqref{eq:prop2leaders1} depends on $t$ only through the function $\exp ( t \Phi (v_2,v_3)) v_2^{-a_2 \sqrt{t}} v_3^{-a_3 \sqrt{t}}$, where
		$$
		\Phi (v_2,v_3) := \frac{1}{v_2 v_3} + v_2 + v_3 - 3.
		$$
		A direct computation shows that for $|v_2|=1$ and $|v_3|=1$ we have $\Re \left( \Phi (v_2,v_3) \right) \le 0$, and the equality is reached for $v_2=v_3=1$ only. Also note that $(v_2,v_3)=(1,1)$ is a critical point of $\Phi (v_2,v_3)$. 
		
		Now, let us deform the contours in the expression from \eqref{eq:prop2leaders1} to circles $|v_2|=|v_3|=1 +1/ t^{10}$ (we do not choose exactly $1$ in order to avoid the pole at $v_2 v_3 =1$). Standard for the steepest descent method estimates show that for these contours the leading contribution to the integral comes from an order $t^{-1/2}$ neighborhood of the critical point $(v_2,v_3)=(1,1)$. 
		We make a change of variables $v_3 = 1+ \frac{u_3}{\sqrt{t}}$, $v_2 = 1+ \frac{u_2}{\sqrt{t}}$ in order to compute the integral in this neighborhood. We have
		$$
		\Phi (v_2,v_3) = 2 + \frac{u_3+u_2}{\sqrt{t}} + \frac{1}{1+\frac{u_3+u_2}{\sqrt{t}} + \frac{u_3 u_2}{t} + o(t^{-1}) } -3 = \frac{u_3^2+u_3 u_2 +u_2^2}{t} + o( t^{-1}).
		$$
		Plugging these expressions into the integral, we obtain that in the limit $t \to \infty$ the probability from \eqref{eq:asymp2joint1} is equal to
		\begin{multline*}
			\frac{1}{(2 \pi \ii)^2} \iint \frac{(u_3-u_2)(u_2 + 2 u_3)(2 u_2 + u_3)}{ (u_2 + u_3) u_2 u_3^3 } \\ \times \exp \left( u_2^2 + u_3 u_2 +u_3^2 - a_2 u_2 - a_3 u_3) \right) d u_2 d u_3 + O \left( \frac{1}{\sqrt{t}} \right),
		\end{multline*}
		where the integration contours are $\eps+ \ii \R$ lines, for $0 < \eps \ll 1$. Note that $t^{-1/2}$ appeared thrice in the numerator, five times in the denominator, and two times from Jacobian of the change of variables, which led to the cancellation of this factor in the leading term of the asymptotics.
		After the change of variables $u_2 = \ii x_2$, $u_3 = \ii x_3$, we obtain the following expression:
		\begin{multline*}
			Y(a_2,a_3) := \frac{1}{(2 \pi \ii)^2} \iint \frac{(x_3-x_2)(x_2 + 2 x_3)(2 x_2 + x_3)}{ (x_2 + x_3) x_2 x_3^3 } \\ \times \exp \left( - x_2^2 - x_3 x_2 - x_3^2 - \ii a_2 x_2 - \ii a_3 x_3 \right) d x_2 d x_3,
		\end{multline*}
		where the integration contours are real axes slightly shifted upwards.
		
		The limiting density for the vector 
		$$
		\left( \frac{L_1(t) - k_1}{\sqrt{t}}, \frac{L_2(t) - k_1}{\sqrt{t}} \right)
		$$
		in the $L_2 (t) < L_1(t)$ region is given by
		$$
		\partial_{a_2} \partial_{a_3} Y(a_2,a_3).
		$$
		We claim that this expression is equal to $d_1(a_2,a_3)$ given by \eqref{eq:density2a}. One way to compute the integral is the following: by further differentiation of the integrand with respect to parameters $a_2$ and $a_3$, one can get rid of the denominator and compute the integral in $x_2, x_3$ explicitly. After this, one can take necessary antiderivatives in $a_2, a_3$. We leave details of this computation to math software.
		
		\smallskip
		
		\textbf{Second Step.}
		
		\smallskip
		
		In order to address the $L_2 (t) > L_1(t)$ region, we need to perform asymptotic analysis of the observable from Proposition \ref{prop:2leaders-2}. Performing exactly the same steps as in the first case, we obtain that the leading term of the asymptotics is given by the integral
		\begin{multline*}
			\hat Y (a_2,a_3) := \frac{1}{(2 \pi \ii)^2} \oint \oint \frac{(x_3-x_2)^2 (x_2 + 2 x_3)(2 x_2 + x_3)}{ (x_2 + x_3) x_3^3 } \\ \times \exp \left( - x_2^2 - x_3 x_2 - x_3^2 - \ii a_2 x_2 - \ii a_3 x_3 \right) d x_2 d x_3,
		\end{multline*}
		where again the integration contours are real axes slightly shifted upwards. 
		
		Evaluating this integral in a similar fashion, we arrive at the statement of the proposition. 
		
	\end{proof}
	
	\subsection{Number of leader changes}
	
	\begin{thm}
		\label{th:leader-changes}
		Fix $k \in \Z$. Let $\mathcal{S}(t)$ be the number of changes of the $k$-leader, i.e., the number of moments when the type of the $k$-leader changes. Then
		$$
		\lim_{t \to \infty} \frac{ \mathbf{E} \left( \mathcal{S}(t) \right)}{\ln(t)} = \frac{3 \sqrt{3}}{4 \pi}.
		$$
	\end{thm}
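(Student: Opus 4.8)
The strategy is to express $\mathbf{E}(\mathcal{S}(t))$ as the time integral of the probability in Proposition~\ref{prop:number-changes-formula} and then read off its logarithmic growth by a second-order steepest descent analysis.

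\emph{Reduction to a rate formula.} By the joint invariance of the multi-type TASEP under global type shifts $c\mapsto c+m$ and spatial shifts $x\mapsto x+m$, composing a type shift by $-k$ with a spatial shift by $k$ fixes the initial condition $\eta_0(x)=-x$ and carries the $k$-leader to the $0$-leader; hence the law of $\mathcal{S}(t)$ does not depend on $k$, and we may take $k=0$. Since the $0$-leader occupies the rightmost site among particles of type $\ge0$, its type can change only through a jump across the edge immediately to its left, and such a jump (rate $1$) occurs exactly when the particle just to the left has larger type --- in which case that particle is necessarily the $(0,2)$-leader. Thus $\mathcal{S}(t)$ is a counting process whose predictable intensity at time $s$ is the indicator of the event described in Proposition~\ref{prop:number-changes-formula}, and Dynkin's formula gives
\[
\mathbf{E}(\mathcal{S}(t))=\int_0^t I(s)\,ds,\qquad I(s)=\frac{1}{(2\pi\ii)^2}\oint_{|v_2|=2}\oint_{|v_3|=2}\frac{(v_2-v_3)(v_2v_3^2-1)(v_2^2v_3-1)}{v_2v_3^2(v_2-1)(v_3-1)(v_2v_3-1)^2}\,e^{s\Phi(v_2,v_3)}\,dv_2\,dv_3,
\]
where $\Phi(v_2,v_3)=\tfrac{1}{v_2v_3}+v_2+v_3-3$. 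As $I(s)$ is a probability, $\int_0^1 I(s)\,ds\le1$, so $\lim_{t\to\infty}\mathbf{E}(\mathcal{S}(t))/\ln t$ is governed entirely by the $s\to\infty$ behaviour of $I(s)$.

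\emph{Steepest descent.} As in the proof of Theorem~\ref{th:joint}, $\Re\Phi\le0$ on $|v_2|=|v_3|=1$ with equality only at the critical point $(1,1)$, where $\Phi(1,1)=0$. Shrinking the contours to $|v_2|=|v_3|=1+\eps s^{-1/2}$ --- this crosses no poles, since $v_2v_3=1$, $v_2=1$, $v_3=1$ all stay strictly inside --- and localizing to an $O(s^{-1/2})$ neighbourhood of $(1,1)$, substitute $v_2=1+u/\sqrt s$, $v_3=1+w/\sqrt s$. Then $dv_2\,dv_3=s^{-1}du\,dw$, $s\,\Phi=Q(u,w)+s^{-1/2}T(u,w)+O(s^{-1})$ with $Q=u^2+uw+w^2$ and $T$ an explicit cubic, and the rational prefactor $R$ has the Laurent expansion $R(1+u/\sqrt s,1+w/\sqrt s)=\sqrt s\,R_0(u,w)+R_1(u,w)+O(s^{-1/2})$, where $R_0(u,w)=\dfrac{(u-w)(u+2w)(2u+w)}{uw(u+w)^2}$ is homogeneous of degree $-1$ and $R_1$ is homogeneous of degree $0$. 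Collecting powers of $s$ on the contours $u,w\in\eps+\ii\R$ gives
\[
I(s)=\frac{1}{s^{1/2}}\,\frac{1}{(2\pi\ii)^2}\iint R_0\,e^{Q}\,du\,dw+\frac1s\,J_{-1}+O(s^{-3/2}),\qquad J_{-1}=\frac{1}{(2\pi\ii)^2}\iint\bigl(R_1+T\,R_0\bigr)e^{Q}\,du\,dw.
\]
The leading term vanishes: $R_0$ is antisymmetric under $u\leftrightarrow w$, while $e^{Q}$ and the contour $(\eps+\ii\R)^2$ are symmetric, so that integral equals its own negative. Hence $I(s)=J_{-1}/s+O(s^{-3/2})$.

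\emph{Evaluating $J_{-1}$ and conclusion.} The integrand defining $J_{-1}$ is a rational function with poles only at $u=0$, $w=0$, $u+w=0$ (all avoided by the contour) against $e^{Q}$; diagonalizing $Q$ and resolving the poles with the Sokhotski--Plemelj theorem exactly as in the proof of Theorem~\ref{th:leader-conditioned-asymp} reduces $J_{-1}$ to elementary Gaussian integrals, and the resulting bookkeeping (which we delegate to computer algebra) gives $J_{-1}=\tfrac{3\sqrt3}{4\pi}$. Since $I(s)=\tfrac{3\sqrt3}{4\pi}s^{-1}+O(s^{-3/2})$ and $\int_0^1 I(s)\,ds<\infty$, we obtain $\mathbf{E}(\mathcal{S}(t))=\int_0^t I(s)\,ds=\tfrac{3\sqrt3}{4\pi}\ln t+O(1)$, and dividing by $\ln t$ completes the proof. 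The crux of the argument is the \emph{second-order} steepest descent: the naive leading contribution is of order $s^{-1/2}$ (which would produce $\sqrt t$ rather than $\ln t$ growth), and only the exact $u\leftrightarrow w$ antisymmetry of $R_0$ against the symmetric Gaussian cancels it; correctly isolating the next-order pieces of both the exponent and the singular prefactor $R$ near the coalescing saddle, and then evaluating the ensuing Gaussian integral --- with its $1/u$, $1/w$, $1/(u+w)^2$ poles straddling the saddle --- to the precise constant $\tfrac{3\sqrt3}{4\pi}$, is where the genuine work lies.
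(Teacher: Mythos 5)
Your proof is correct and follows essentially the same route as the paper: interpret the probability of Proposition~\ref{prop:number-changes-formula} as the instantaneous rate of leader changes and integrate it in time, then do steepest descent at the coalescing saddle $(1,1)$, observe that the naive $s^{-1/2}$ term cancels by the $u\leftrightarrow w$ antisymmetry of $R_0$ against the symmetric Gaussian, and extract the surviving $s^{-1}$ term whose coefficient is $\tfrac{3\sqrt3}{4\pi}$. The only differences are cosmetic: you spell out the reduction $\mathbf{E}(\mathcal{S}(t))=\int_0^t I(s)\,ds$ (which the paper compresses into ``Integrating in $t$''), and you carry the full second-order correction $R_1+TR_0$, whereas the paper notes that only the asymmetry of the factor $v_2v_3^2$ contributes (indeed $TR_0$ and the antisymmetric part of $R_1$ integrate to zero) --- both bookkeepings reduce to the same Gaussian integral, which each of you delegates to computer algebra.
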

	
	\begin{proof}
		
		We analyze the asymptotics of the expression given by Proposition \ref{prop:number-changes-formula}. 
		
		Note that the function in the exponent is the same as in the previous subsection, so we again apply the steepest descent method with the same deformation of contours, get the leading asymptotic contribution from the neighborhood of  the critical point $(1,1)$, and make a change of variables $v_2 = 1+ u_2 / \sqrt{t}$, $v_3 = 1+ u_3 / \sqrt{t}$ in order to compute it. We pick three factors $1 / \sqrt{t}$ from the numerator, four such factors from the denominator, and two more from the Jacobian of the change of variables. This gives the leading asymptotics $1/\sqrt{t}$. However, note that the integrand is anti-symmetric in the leading order in $t$. Since we take the same contour of integration for $u_2$ and $u_3$, this leading asymptotics also vanishes. The first non-zero contributing term comes from the non-symmetric factor in the integrand, which is the factor $v_2 v_3^2 = 1 + \frac{u_2+2 u_3}{\sqrt{t}}+O(t^{-1})$. Thus, the integral \eqref{eq:number-changes-formula} has leading asymptotics of the form
		$$
		t^{-1} \frac{1}{4 \pi^2} \iint \frac{(u_2-u_3)(u_2+2u_3)(2u_2+u_3)}{(u_2+u_3)^2 u_2 u_3} (u_2+2 u_3) \exp \left( u_2^2 + u_2 u_3 +u_3^2 \right) du_2 du_3,
		$$
		where the contours are $\ii \mathbb{R} + \varepsilon$, for $0<\varepsilon \ll 1$. Making the change of variables $u_i = \ii x_i$, we get
		$$
		t^{-1} \frac{1}{4 \pi^2} \int_{\Im (x_2)= \varepsilon} \int_{\Im (x_3)= \varepsilon} \frac{(x_2-x_3)(x_2+2x_3)^2 (2x_2+x_3)}{(x_2+x_3)^2 x_2 x_3} \exp \left( -x_2^2 - x_2 x_3 - x_3^2 \right) du_2 du_3.
		$$
		This integral can be directly computed, and it is equal to $\frac{3 \sqrt{3}}{4 \pi}$. Integrating in $t$, we arrive at the statement of the theorem.  
		
	\end{proof}
	
	\section{Voter model}
	\label{sec:Voter-section}
	
	\subsection{Totally asymmetric voter model and coalescence process}
	\label{sec:voter-1}
	
	In this section we recall a definition of (particular cases of) a voter model and of a coalescence process on $\Z$.
	
	We consider configurations $\nu:\Z \to \Z$. The equality $\nu (x)=y$ is interpreted as a "voter" at position $x$ having "opinion" $y$. The opinions evolve in time according to the following rules. For each pair on neighboring integers $(x,x+1)$ one associates an independent Poisson process of rate 1 on $\R_{\ge 0}$. Whenever a signal arrives at time $t$, one updates the opinion of a voter at $x$ via $\nu_{t+0} (x) = \nu_{t} (x+1)$. In words, the voter to the left just takes the opinion of its immediate right neighbor. The previous opinion, as well as "the value" of the new opinion, do not play a role.

	In one dimension, there is a well-known projection from the voter model to a coalescence process; let us recall it. Configurations of the coalescence process are maps $\xi: \Z \to \{0,1\}$. Given a configuration of the voter model $\nu$, define $\xi (x) = 1$ iff $\nu (x-1) \ne \nu (x)$, for all $x \in \Z$. In words, we consider connected components of the voters with the same opinion, and mark the left border of such a cluster by a particle.
	
	It is clear that if $\nu_t$ is the totally asymmetric voter model introduced above, the projection $\xi_t$ is Markov and has the following description. Again, for each pair of neighboring positions we do the following updates independently and with rate 1:
	$$
	00 \to 00, \quad 10 \to 10, \quad 01 \to 10, \quad 11 \to 10.
	$$
	These are exactly the update rules of the coalescence process with particles jumping independently with rate 1 to the left only, and coalescing into one particle whenever they are in the same position. 
	
	\begin{ex}
		
		\begin{center}
			\setlength{\tabcolsep}{4pt} % tighter columns; tweak as you like
			\begin{tabular}{l *{14}{>{\small}c}}
				\hline
				\textbf{Voter model $\nu$} &... & >0 & >0 & 0 & 0 & -1 & -1 & -2 & -2 & -2 & <-2 & <-2 & ... &   \\
				\textbf{Coalescence process $\xi$} &... & ... & ... & 1 & 0 & 1 & 0 & 1 & 0 & 0 & ... & ... & ... &   \\
				\textbf{positions} &... & -6 & -5 & -4 & -3 & -2 & -1 & 0 & 1 & 2 & 3 & 4 & ... &   \\
				\hline
			\end{tabular}
		\end{center}
		
		\medskip
		
		In the configurations above, $\nu_t (-1) =-1$, $\xi_t (-1) =0$. If the update between $-1$ and $0$ happens, then in the updated configuration one will have $\nu_{t+0} (-1) =-2$, $\xi_{t+0} (-1) =1$.

	\end{ex}
	
	\subsection{Embedding into the multi-type TASEP}
	\label{sec:Embedding-Voter}
	
	As before, consider a multi-type TASEP $\eta_t$ which starts from the initial configuration $\eta_0(x)=-x$. For each configuration $\eta$, define a map $\tilde \xi: \Z \to \{0,1\}$ by the following rules. Let $\tilde \xi (a) =1$ iff $\eta (x) > \eta (a)$, for all $x < a$, and $\tilde \xi (a) = 0$ otherwise. In words, $\tilde \xi (a) =1$ iff the particle at position $a$ has not interacted with any of the particles which were originally placed on its right (these particles are of smaller type in our notation, and due to non-interaction, they are still all to the right of the particle standing at $a$ at time $t$).
	
	For each configuration $\eta$, define a map $\tilde \nu: \Z \to \Z$ via
	$$
	\tilde \nu(b):= \min_{ x \le b} \eta (x), \qquad \mbox{for $b \in \Z$}.
	$$ 
	In words, this is the minimal type of the particles weakly to the left of $b$. See Example \ref{ex:multTasepEmb} below.
	
	\begin{prop}
		\label{prop:voter-coupling}
		Let $\tilde \xi_t$, $\tilde \nu_t$ be defined according to the rules above from the TASEP $\eta_t$. Then $\tilde \nu_t$ is a totally asymmetric voter process with the initial configuration $\nu_0 (x) = -x$, for all $x \in \Z$, and $\tilde \xi_t$ is a totally asymmetric coalescence process with the initial configuration $\tilde \xi_t (x) =1$, for all $x \in \Z$.
	\end{prop}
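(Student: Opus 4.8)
The plan is to exhibit $\tilde\nu_t$ as a pathwise functional of the graphical construction of the TASEP $\eta_t$ and to show that, under the obvious identification of Poisson clocks, it coincides as a trajectory --- hence in law --- with the totally asymmetric voter model started from $\nu_0(x)=-x$; the statement for $\tilde\xi_t$ will then follow formally. The point is that $\tilde\xi_t$ is, at every time $t$, exactly the left-border projection $\nu\mapsto\xi$ of Section~\ref{sec:voter-1} applied to $\tilde\nu_t$: the condition ``$\eta_t(y)>\eta_t(a)$ for all $y<a$'' defining $\tilde\xi_t(a)=1$ is equivalent to $\min_{y\le a-1}\eta_t(y)>\eta_t(a)$, i.e.\ to $\tilde\nu_t(a-1)\ne\tilde\nu_t(a)$. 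So once $\tilde\nu_t$ is identified with the voter model, the coalescence-process statement for $\tilde\xi_t$ is immediate from the projection fact recalled in Section~\ref{sec:voter-1}. The initial data also match: $\tilde\nu_0(b)=\min_{x\le b}(-x)=-b$, and $\tilde\xi_0(a)=1$ for every $a$ since $-y>-a$ whenever $y<a$.

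I would set up the graphical construction with one rate-$1$ Poisson clock per bond $\{x,x+1\}$: when it rings the TASEP swaps the values at $x$ and $x+1$ if $\eta(x)>\eta(x+1)$ and does nothing otherwise (this is the $q=0$ case of the colored ASEP of Section~\ref{sec:integral}, and from the packed initial condition it defines $\eta_t$ for all $t$ almost surely, cf.\ \cite{Harris1965,Liggett1976}; the finiteness facts already used earlier also guarantee that $\min_{x\le b}\eta_t(x)$ is attained, so $\tilde\nu_t$ is well defined). Next I would feed these same clocks into the voter model on the same bonds, a ring of $\{x,x+1\}$ performing the update $\nu(x)\leftarrow\nu(x+1)$. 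The core step is then the claim that, under this identification, a ring of $\{x,x+1\}$ changes $\tilde\nu$ in precisely the way prescribed by the voter rule, so that running the voter dynamics from $\tilde\nu_0$ with these clocks reproduces $(\tilde\nu_t)_{t\ge0}$ path by path.

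To prove the claim I would distinguish two cases at a ring of $\{x,x+1\}$. If $\eta(x)<\eta(x+1)$, the TASEP does nothing; and since $\tilde\nu(x)=\min_{y\le x}\eta(y)\le\eta(x)<\eta(x+1)$, one has $\tilde\nu(x+1)=\min(\tilde\nu(x),\eta(x+1))=\tilde\nu(x)$, so the voter move $\nu(x)\leftarrow\nu(x+1)$ is vacuous too. If $\eta(x)>\eta(x+1)$, the TASEP swaps; writing $m:=\min_{y<x}\eta(y)$, a short check shows that $\tilde\nu(b)$ is unchanged for every $b\ne x$ --- for $b<x$ and for $b>x+1$ the multiset $\{\eta(y):y\le b\}$ is untouched, and for $b=x+1$ one computes $\min(m,\eta(x),\eta(x+1))$ both before and after --- whereas at $b=x$ the value becomes $\min(m,\eta(x+1))=\min(m,\eta(x),\eta(x+1))$, which is exactly the \emph{old} value of $\tilde\nu(x+1)$. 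Hence the ring resets $\tilde\nu(x)$ to $\tilde\nu(x+1)$ and leaves all other coordinates fixed, i.e.\ performs precisely the voter update. Since the two processes agree at $t=0$ and undergo identical jumps at each clock ring (and, as usual, the evolution of any finite window up to time $t$ depends on finitely many clocks almost surely), they agree for all $t\ge0$, which gives the law of $\tilde\nu_t$; and then $\tilde\xi_t$ follows as explained.

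The main --- if modest --- obstacle is exactly this case analysis: one must verify carefully that a TASEP swap at $\{x,x+1\}$ leaves $\tilde\nu(x+1)$, and every $\tilde\nu(b)$ with $b\ne x$, unchanged while resetting $\tilde\nu(x)$ to the old $\tilde\nu(x+1)$, and, dually, that in the no-swap case the prescribed voter move is automatically trivial because $\tilde\nu(x)\le\eta(x)$. A minor secondary point needing a line of justification is that $\tilde\nu_t$, being an infimum over a half-line of positions, is almost surely attained and finite --- this is inherited from the step/packed initial condition as in the earlier sections.
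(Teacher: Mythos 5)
Your proposal is correct and follows essentially the same route as the paper: a local verification that each clock ring at a bond transforms $\tilde\nu_t$ (and hence $\tilde\xi_t$) exactly according to the voter (resp.\ coalescence) update rules. Your organization is slightly cleaner --- you first observe that $\tilde\xi_t$ is the deterministic left-border projection of $\tilde\nu_t$ recalled in Section~\ref{sec:voter-1}, so only the voter identification needs checking, whereas the paper runs the case analysis on the pair $(\tilde\xi_t(a),\tilde\xi_t(a+1))$ and tracks both processes simultaneously --- but the substance of the argument is the same.
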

	
	\begin{proof}
		Let us consider possible local changes after one update in TASEP, and how they affect their projections $\tilde \xi_t$ and $\tilde \nu_t$; in particular, we will see that these projections are Markov. Let $\eta_t (a)=x > y=\eta_t (a+1)$ be two types of particles, which are updated at the pair of neighboring positions $(a,a+1)$ at time $t$, so $\eta_{t+0} (a) = y$, $\eta_{t+0} (a+1) = x$. Consider the following cases.
		
		\medskip
		
		a) Let $\tilde \xi_t (a) = 0$, $\tilde \xi_t (a+1)=0$. Then one has $z:=\min_{b: b \le a+1} \eta_t (b) < y < x$; it follows that $\tilde \nu_t (a) = \tilde \nu_t (a+1) = z$, and both $\tilde \xi_t$ and $\tilde \nu_t$ do not change after such a TASEP update. 
		
		\medskip
		
		b) Let $\tilde \xi_t (a) = 0$, $\tilde \xi_t (a+1)=1$. Then one has $y=\min_{b: b \le a+1} \eta_t (b)$, thus $\tilde \nu_t (a+1)=y$. After the TASEP update, we have $\tilde \nu_{t+0} (a) = 
		\tilde \nu_{t+0} (a+1)=y$, and $\tilde \xi_{t+0} (a) = 1$, $\tilde \xi_{t+0} (a+1)=0$, since $x>y$.
		
		\medskip
		
		c) Let $\tilde \xi_t (a) = 1$, $\tilde \xi_t (a+1)=0$. Since $x=\min_{b: b \le a} \eta_t (b)$ in that case, and $y<x$, we have $y=\min_{b \in \Z; b \le a+1} \eta_t (b)$, which contradicts $\tilde \xi_t (a+1)=0$. Thus, this case is impossible. 
		
		\medskip
		
		d) Let $\tilde \xi_t (a) = 1$, $\tilde \xi_t (a+1)=1$. In such a case, $x=\min_{b:; b \le a} \eta_t (b)$, and $y=\min_{b: b \le a+1} \eta_t (b)$, which implies $\tilde \nu_t (a)=x$ and $\tilde \nu_t (a+1)=y$. After the update, $x = \eta_{t+0}  (a+1) \ne \min_{b: b \le a+1} \eta_t (b)=y$, which means that $\tilde \xi_{t+0} (a+1) = 0$, while $\tilde \xi_{t+0} (a)=1$, and $\tilde \nu_{t+0} (a) = \tilde \nu_{t+0} (a+1) = y$. 
		
		\medskip
		
		As we see, any update of TASEP updates the processes $\tilde \nu_t$ and $\tilde \xi_t$ in exactly the same way as was described in Section \ref{sec:voter-1}. This completes the proof of the proposition.  
	\end{proof}
	
	For the coalescence process, a similar coupling with the multi-species TASEP was presented in \cite{K}. 
	
	See below for an example of the described map between a TASEP configuration and voter/coalescence process configurations. 
	
	\begin{ex}
		\label{ex:multTasepEmb}
		
		\begin{center}
			\setlength{\tabcolsep}{4pt} % tighter columns; tweak as you like
			\begin{tabular}{l *{14}{>{\small}c}}
				\hline
				\textbf{TASEP $\eta$} & ... & >4 & >4 & 0  & 4 & -1 & 3 & -2 & 1 & 2 & <-2 & <-2 & ... & \\
				\textbf{Voter model $\tilde \nu$} &... & >0 & >0 & 0 & 0 & -1 & -1 & -2 & -2 & -2 & <-2 & <-2 & ... &   \\
				\textbf{Coalescence process $\tilde \xi$} &... & ... & ... & 1 & 0 & 1 & 0 & 1 & 0 & 0 & ... & ... & ... &   \\
				\textbf{positions} &... & -6 & -5 & -4 & -3 & -2 & -1 & 0 & 1 & 2 & 3 & 4 & ... &   \\
				\hline
			\end{tabular}
		\end{center}

	\end{ex}
	
	\medskip
	
	\subsection{Duality with leaders}
	
	In this section we use an algebraic property of the multi-species TASEP in order to relate the voter model with the leaders in TASEP. 
	
	First, let us recall the color-position symmetry of TASEP in our notation. 
	
	\begin{prop}
		\label{prop:cp-symmetry}
		For the multi-type TASEP $\eta_t$ with initial condition $\eta_0 (x)=-x$, one has the following equality in distribution
		$$
		\{ - \eta_t (x) \}_{x \in \Z} \overset{d}{=} \{ \eta_t^{-1} (-x) \}_{x \in \Z}.
		$$
	\end{prop}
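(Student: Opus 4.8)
The statement is just the color--position symmetry of the rainbow colored TASEP, repackaged with two sign flips; the plan is to deduce it from the version already recalled in Section~\ref{sec:observables} --- namely that the random permutation $\eta_t$ is equidistributed with its inverse $\eta_t^{-1}$, which is the deep input (see \cite{AAV,BW-coloured,BB,Buf}) --- together with one elementary reflection symmetry, by pure bookkeeping.

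\emph{Reflection symmetry.} First I would record that the TASEP with this initial condition is invariant under the simultaneous reflection of positions $x\mapsto -x$ and of colors $c\mapsto -c$, i.e. that $\{-\eta_t(-x)\}_{x\in\Z}\overset{d}{=}\{\eta_t(x)\}_{x\in\Z}$, equivalently $R\,\eta_t\,R\overset{d}{=}\eta_t$ where $R$ is the reflection $y\mapsto -y$ (so $(R\eta R)(y)=-\eta(-y)$). This is immediate from the graphical construction: the initial condition $\eta_0(x)=-x$ is fixed by $R(\cdot)R$; the clock attached to the ordered pair $(x,x+1)$, which swaps the two values exactly when $\eta(x)>\eta(x+1)$, is transported to the clock attached to $(-x-1,-x)$, whose trigger $-\eta(x+1)>-\eta(x)$ is the same inequality, with matching swaps; and $(x,x+1)\mapsto(-x-1,-x)$ is a bijection of bonds, so the reflected family of i.i.d.\ rate-$1$ clocks has the same law. (One has to check that $R(\cdot)R$ sends the driving Poisson clocks to an equidistributed family, not merely that it preserves the one-time marginal; this is the only point that actually needs to be verified, and it is routine.)

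\emph{Combining the two symmetries.} Next I would work at the level of random permutations of $\Z$. Color--position symmetry gives $\eta_t\overset{d}{=}\eta_t^{-1}$, and the reflection symmetry gives $R\,\eta_t\,R\overset{d}{=}\eta_t$, hence $R\,\eta_t\,R\overset{d}{=}\eta_t^{-1}$. Multiplying this identity of random permutations on the right by the fixed permutation $R$ --- which preserves equality in distribution, right multiplication by $R$ being a bijection of the space of permutations, and $R^2=\mathrm{id}$ --- yields $R\,\eta_t\overset{d}{=}\eta_t^{-1}\,R$. Reading off coordinates, $R\,\eta_t=\{-\eta_t(x)\}_{x\in\Z}$ and $\eta_t^{-1}\,R=\{\eta_t^{-1}(-x)\}_{x\in\Z}$, which is exactly the claimed equality in distribution.

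\emph{Main obstacle.} There is no hard analytic step: the substantive content is the color--position symmetry, which is a cited result, and the rest is the elementary reflection symmetry plus keeping track of the two sign flips and of the (automatic) fact that equidistribution of a random element of $\Z^\Z$ is preserved under a fixed pointwise relabelling of values, under composition with a fixed permutation of the index set, and under inversion on the a.s.\ event that the configuration is a bijection. If one instead wanted a self-contained argument not appealing to \cite{AAV,BB,BW-coloured}, the difficulty would migrate entirely into reproving color--position symmetry, which needs the vertex-model / Hecke-algebra machinery and is outside the scope of a short proof.
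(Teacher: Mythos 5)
Your proof is correct, and it rests on the same essential input as the paper's --- the color--position symmetry for the rainbow step initial condition --- but it organizes the bookkeeping differently. The paper's proof is a one-liner: it applies the cited symmetry of \cite{AAV} directly to the relabelled permutation built from $\eta_t$ and the reflection $R:y\mapsto -y$ (which starts from the identity configuration), obtaining $\{-\eta_t(x)\}_{x\in\Z}\overset{d}{=}\{\eta_t^{-1}(-x)\}_{x\in\Z}$ in a single step, with no separate reflection argument. You instead start from the form $\eta_t\overset{d}{=}\eta_t^{-1}$ recalled in Section~\ref{sec:observables} and supply the bridge to the stated form via the reflection invariance $R\,\eta_t\,R\overset{d}{=}\eta_t$, which you justify correctly from the graphical construction (the bond bijection $(x,x+1)\mapsto(-x-1,-x)$ preserves the law of the i.i.d.\ clocks, the swap condition, and the initial condition). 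This is a legitimate and slightly more self-contained route; in fact the two forms of the color--position symmetry that appear in the paper --- $\eta_t\overset{d}{=}\eta_t^{-1}$ in Section~\ref{sec:observables} versus $R\,\eta_t\overset{d}{=}\eta_t^{-1}R$ in Proposition~\ref{prop:cp-symmetry} --- differ from one another exactly by the reflection symmetry you prove, so your argument makes explicit a step that the paper absorbs silently into its choice of which version of the cited theorem to invoke. The remaining points you flag (that $\eta_t$ is a.s.\ a bijection, and that composition with a fixed permutation and inversion preserve equality in distribution) are indeed routine and are consistent with what the paper already asserts.
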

	\begin{proof}
		Define a permutation $- \eta_t (x) := \eta (-x)$, $x \in \Z$. Then for $- \eta_t$ one has the color-position symmetry in the form
		$$
		\{ - \eta_t (x) \}_{x \in \Z} \overset{d}{=} \{ (-\eta_t)^{-1} (x) \}_{x \in \Z},
		$$
		see \cite[Theorem 1.4]{AAV}, and also \cite{BW-coloured}, \cite{BB}, \cite{Buf}. We rewrite the right-hand side in order to use it below with the inverse of $\eta_t$ rather than $-\eta_t$.
	\end{proof}

	\begin{prop}
		\label{prop:voter-tasep-dual}
		In the notation of Proposition \ref{prop:voter-coupling},
		let $\mathcal{A}_{y,r_2,r_1}$, for integers $y$ and $r_2 \le r_1$, be the event that $\tilde \nu_t (r_2) = \tilde \nu_t (r_1) = y$ and $\tilde \nu_t (r_2-1) \ne y$. Then 
		$$
		\mathrm{Prob} \left( \mathcal{A}_{y,r_2,r_1} \right) = \mathrm{Prob} \left( \mbox{in TASEP $\eta_t$ the particle of type $r_2$ is $r_1$-leader and is located at $y$} \right).
		$$
	\end{prop}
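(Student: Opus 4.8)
The plan is to rewrite both events as explicit events about the single random permutation $\eta_t$, and then to recognize them as images of one another under the color--position symmetry recorded in Proposition~\ref{prop:cp-symmetry}.

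First I would unfold $\mathcal{A}_{y,r_2,r_1}$. The function $b\mapsto\tilde\nu_t(b)=\min_{x\le b}\eta_t(x)$ is non-increasing, so on $\{\tilde\nu_t(r_2)=\tilde\nu_t(r_1)=y\}$ (recall $r_2\le r_1$) it is identically $y$ on the interval $[r_2,r_1]$, and $\tilde\nu_t(r_2-1)\ne y$ forces $\tilde\nu_t(r_2-1)>y$, i.e.\ this plateau of the voter model begins exactly at $r_2$. Combining the three conditions, one checks that $\mathcal{A}_{y,r_2,r_1}$ holds if and only if $\eta_t(r_2)=y$ and $\eta_t(x)>y$ for every $x\le r_1$ with $x\ne r_2$. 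Reading this in the ``positions of colors'' language: the particle of color $y$ sits at position $r_2$ (which is $\le r_1$), and every particle of color strictly less than $y$ sits at a position strictly greater than $r_1$.

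Next I would unfold the right-hand event: the particle of type $r_2$ is the $r_1$-leader located at $y$ precisely when $\eta_t^{-1}(r_2)=y$ and $\eta_t^{-1}(z)\le y$ for all $z\ge r_1$ --- that is, the type-$r_2$ particle is the right-most among all particles of type $\ge r_1$, and it lies at position $y$. Then I would apply Proposition~\ref{prop:cp-symmetry}: the color--position symmetry interchanges the description ``colors occupying positions'' of $\eta_t$ with the description ``positions occupied by colors'' of $\eta_t^{-1}$, and applied to the event from the first step it produces exactly the event from this step. Concretely, ``the color-$y$ particle is at position $r_2$'' is exchanged with ``the color-$r_2$ particle is at position $y$'', while ``every color $<y$ lies strictly beyond $r_1$'' becomes the statement that among particles of color $\ge r_1$ the color-$r_2$ one is extremal; the hypothesis $r_2\le r_1$ is precisely what ensures, after the position/color reflection built into Proposition~\ref{prop:cp-symmetry}, that this is a genuine $r_1$-leader statement. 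Equating the probabilities of the two matched events yields the proposition.

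I expect the main difficulty to be exactly this matching step: one must carefully track the space and color reflections inherent in Proposition~\ref{prop:cp-symmetry} --- and, if necessary, combine it with the reflection symmetry $\eta_t\overset{d}{=}\{-\eta_t(-x)\}_{x\in\Z}$ of the TASEP started from $\eta_0(x)=-x$ --- so that the event produced by the symmetry is exactly the stated leader event rather than a mirror-image ``trailer'' event (``left-most among types $\le r_1$'' in place of ``right-most among types $\ge r_1$''). The role of the third condition $\tilde\nu_t(r_2-1)\ne y$ is delicate but essential here: it is what pins the left endpoint of the constant stretch of $\tilde\nu_t$ at $r_2$, and hence, on the leader side, simultaneously fixes the identity of the leading particle and its position $y$.
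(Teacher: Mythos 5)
Your proposal is correct and follows essentially the same route as the paper: the paper's proof is exactly the chain you describe --- rewrite $\mathcal{A}_{y,r_2,r_1}$ as $\min\left(\{\eta_t(x)\}_{x\le r_1}\right)=\eta_t(r_2)=y$ via Proposition~\ref{prop:voter-coupling}, apply the color--position symmetry of Proposition~\ref{prop:cp-symmetry}, and then undo the resulting space/color reflections using the particle--hole duality $\{\eta_t^{-1}(x)\}_{x\in\Z}\overset{d}{=}\{-\eta_t^{-1}(-x)\}_{x\in\Z}$, which is precisely the extra reflection symmetry you anticipate needing. The bookkeeping of reflections that you flag as the main difficulty is indeed where all the content lies, and the paper handles it exactly as you outline.
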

	
	\begin{proof}
		
		We have:
		\begin{multline}
			\label{eq:voter-duality}
			\mathrm{Prob} \left( \mathcal{A}_{y,r_2,r_1} \right) 
			= 
			\mathrm{Prob} \left( \min \left( \{ \eta_t (x) \}_{x: x \le r_1} \right)
			= \eta_t (r_2) = y \right)
			\\ =
			\mathrm{Prob} \left( \min \left( \{ -\eta_t^{-1} (-x) \}_{x: x \le r_1} \right) = - \eta_t^{-1} (-r_2) = y \right)
			\\ =
			\mathrm{Prob} \left( \max \left( \{ \eta_t^{-1} (-x)  \}_{x: x \le r_1} \right) = \eta_t^{-1} (-r_2) = - y \right)
			=
			\mathrm{Prob} \left( \max \left( \{ \eta_t^{-1} (x) \}_{x: x \le r_1} \right) = \eta_t^{-1} (r_2) = y \right),
		\end{multline}
		where the first equality follows from Proposition \ref{prop:voter-coupling}, the second equality follows from Proposition \ref{prop:cp-symmetry}, and the last equality is the particle-hole duality of TASEP (in the multi-type case, in our notation, it implies that $\{ \eta_t^{-1} (x) \}_{x \in \Z} $ and $\{ -\eta_t^{-1} (-x) \}_{x \in \Z} $ have the same distribution, since they follow the same evolution rules). The event from the last expression of \eqref{eq:voter-duality} is exactly the event that the particle of type $r_2$ is $r_1$-leader and stands at $y$, which completes the proof.   
	\end{proof}
	
	Proposition \ref{prop:voter-tasep-dual} in combination with Theorems \ref{th:leader-type} and \ref{th:leader-conditioned-asymp} immediately implies the following asymptotic results.
	
	\begin{cor}
		\label{cor:asymp-voter}
		Let $\nu_t$ be the voter model which starts from the initial configuration $\nu_0 (x)= -x$, $x \in \Z$, as above. Let $Y_0:= \nu_t (0)$ and  
		$$
		E_0 (t) := \min_{i \le 0} \left(i: \nu_t \left( j \right) =Y_0, \ \mbox{for all $j$ such that $i \le j \le 0$ } \right).
		$$
		In words, $E_0 (t)$ is the left-most position that shares the "opinion" with position $0$. 
		Then we have
		\begin{multline*}
			\lim_{t \to \infty} \left( \frac{E_0(t)}{\sqrt{t}}, \frac{Y_0(t)}{\sqrt{t}}  \right)
			\\ =
			\frac{1}{\sqrt{2 \pi t}} \exp \left( \frac{-y^2}{2} \right) \cdot \frac{1}{2} \left( 1 + \mathrm{erf} \left( \frac{-y-a}{\sqrt{2}} \right) + \left( 1 - \mathrm{erf} \left( \frac{-y}{\sqrt{2}} \right) \right) \exp \left( - a y - \frac{a^2}{2} \right) \right) dy da,
		\end{multline*}
		where the convergence is in distribution and the coordinates are ordered as $(a,y)$. 
	\end{cor}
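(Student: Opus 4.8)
The plan is to deduce the corollary from the asymptotics already established for TASEP leaders, using the exact finite-time duality of Proposition~\ref{prop:voter-tasep-dual}; no new analytic input should be needed. By Proposition~\ref{prop:voter-coupling} the voter model $\nu_t$ of the statement is realized as $\tilde\nu_t(b)=\min_{x\le b}\eta_t(x)$ for the multi-type TASEP $\eta_t$ with $\eta_0(x)=-x$, so the law of the integer pair $(E_0(t),Y_0(t))$ is determined by the point masses $\mathrm{Prob}(E_0(t)=r,\ Y_0(t)=y)$, $r\le 0$, $y\in\Z$. First I would check that $\{E_0(t)=r,\ Y_0(t)=y\}$ is exactly the event $\mathcal{A}_{y,r,0}$ of Proposition~\ref{prop:voter-tasep-dual}: since $b\mapsto\tilde\nu_t(b)$ is non-increasing, $\tilde\nu_t(r)=\tilde\nu_t(0)=y$ forces $\tilde\nu_t(j)=y$ for every $r\le j\le 0$, and $\tilde\nu_t(r-1)\ne y$ then says precisely that $r$ is the left endpoint of the opinion cluster through $0$. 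Thus Proposition~\ref{prop:voter-tasep-dual} yields
\[
\mathrm{Prob}\bigl(E_0(t)=r,\ Y_0(t)=y\bigr)=\mathrm{Prob}\bigl(\mbox{in $\eta_t$ the type-$r$ particle is the $0$-leader and is located at position $y$}\bigr),
\]
which, via the reflection/particle-hole symmetries built into the chain~\eqref{eq:voter-duality}, is a leader type-and-position probability of exactly the kind evaluated by Proposition~\ref{prop:leader-position}.

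Next I would match the two problems parameter-by-parameter. Since $E_0(t)\le 0$ lives on the $\sqrt t$-scale while $Y_0(t)$ concentrates near $-t$ with $\sqrt t$-fluctuations (the dual random walk of the totally asymmetric voter model is Poisson of intensity $t$), the natural substitution is $r=-\lfloor a\sqrt t\rfloor$ with $a\ge 0$ and $y$ chosen so that the dual leader sits at $\lfloor t+x\sqrt t\rfloor$, $x\in\R$. Summing the tail in $r$ to pass to the ``type $\ge$'' form of Proposition~\ref{prop:leader-position}, the probability above becomes exactly $\mathrm{Prob}(\mbox{the $0$-leader has type $\ge\lfloor a\sqrt t\rfloor$ and is located at $\lfloor t+x\sqrt t\rfloor$})$, whose large-$t$ asymptotics is precisely formula~\eqref{eq:dddd}. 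Moreover the right-hand side of the corollary is verbatim~\eqref{eq:dddd} with the dummy variable renamed from $x$ to $y$, so what remains is the same final bookkeeping as at the end of the proof of Theorem~\ref{th:leader-conditioned-asymp}: identify the prefactor $\tfrac{1}{\sqrt{2\pi t}}e^{-y^2/2}$ with the central-limit lattice probability for the leader's position, convert the point masses in $Y_0(t)$ into a density, and differentiate in the $E_0(t)$-variable; this upgrades the finite-time identity to the claimed convergence in distribution of $(E_0(t)/\sqrt t,\ Y_0(t)/\sqrt t)$.

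The part that genuinely needs care --- everything else being mechanical --- is the sign-and-centering bookkeeping in the dictionary: tracking how the voter conventions (opinions flowing left, $E_0(t)\le 0$, $Y_0(t)\approx -t$) are carried to the leader conventions (type bounded below by the cutoff, position $\approx +t$) through the reflection of Proposition~\ref{prop:cp-symmetry} and the particle-hole step inside~\eqref{eq:voter-duality}, so that the arguments of the error functions and exponentials come out as written. Once that is pinned down there is no further work: the steepest-descent estimate, the $O(t^{-1/2})$ error, and the Gaussian prefactor are already in the proof of Theorem~\ref{th:leader-conditioned-asymp}, and convergence in distribution of the rescaled pair follows from pointwise convergence of the lattice joint distribution functions together with the uniform Gaussian tails in~\eqref{eq:dddd}, which give tightness.
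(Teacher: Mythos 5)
Your proposal is correct and follows exactly the route the paper intends: the paper's entire ``proof'' of Corollary~\ref{cor:asymp-voter} is the one-line remark that it follows from Proposition~\ref{prop:voter-tasep-dual} together with Theorems~\ref{th:leader-type} and~\ref{th:leader-conditioned-asymp}, and you correctly flesh this out by identifying $\{E_0(t)=r,\,Y_0(t)=y\}$ with $\mathcal{A}_{y,r,0}$, invoking the duality, summing the tail in $r$, and quoting the asymptotics \eqref{eq:dddd}. The sign-and-centering bookkeeping you flag as the only delicate point (in particular that $Y_0(t)$ must be recentered by $-t$ before dividing by $\sqrt{t}$, and that the voter position $r\le 0$ maps to a leader \emph{type} via a reflection) is indeed where all the care lies, and the paper itself leaves it equally implicit.
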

	
	\begin{cor}
		\label{cor:coalescence}
		Let $\xi_t$ be the coalescence process with a fully packed initial condition $\xi_0 (x)=1$, as above. Let 
		$$
		E_0 (t) := \max_{i \le 0} \left(i: \xi_t \left( i \right) =1, \xi_t \left( j \right) =0, \ \mbox{for all $j$ such that $i < j \le 0$ } \right).
		$$
		In words, $E_0(t)$ denotes the right-most nonpositive occupied position of the coalescence process at time $t$. 
		Then
		$$
		\lim_{t \to \infty} \frac{E_0(t)}{\sqrt{t}} = \frac{1}{\sqrt{\pi}} \exp \left( \frac{-a^2}{4} \right) \mathbf{1}_{a \ge 0} \, da,
		$$
		where the convergence is in distribution. 
	\end{cor}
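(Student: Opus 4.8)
The plan is to derive Corollary~\ref{cor:coalescence} from the one-variable limit theorem for the leader's type, Theorem~\ref{th:leader-type} with $k=0$, by combining it with the coupling of Proposition~\ref{prop:voter-coupling} and the duality of Proposition~\ref{prop:voter-tasep-dual}. The authors indicate this implication is immediate, so the work is essentially bookkeeping around a couple of reductions.

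First I would use Proposition~\ref{prop:voter-coupling} to realize $\xi_t$ (started from $\xi_0\equiv 1$) as the projection $\tilde\xi_t$ of the multi-type TASEP $\eta_t$ with $\eta_0(x)=-x$, which reduces the problem to analyzing $\tilde\xi_t$ together with the companion voter field $\tilde\nu_t(b)=\min_{x\le b}\eta_t(x)$. Next I would rewrite the event $\{E_0(t)=i\}$ (for $i\le 0$) purely in terms of $\tilde\nu_t$: since $b\mapsto\tilde\nu_t(b)$ is non-increasing, the condition ``$\xi_t(j)=0$ for all $i<j\le 0$'' is equivalent to $\tilde\nu_t(i)=\tilde\nu_t(i+1)=\dots=\tilde\nu_t(0)$, and hence to the single equality $\tilde\nu_t(i)=\tilde\nu_t(0)$; together with $\xi_t(i)=1$, i.e. $\tilde\nu_t(i-1)\ne\tilde\nu_t(i)$, this shows that $\{E_0(t)=i\}$ is exactly the disjoint union over the common opinion $y$ of the events $\mathcal{A}_{y,i,0}$ of Proposition~\ref{prop:voter-tasep-dual} (with $r_2=i$, $r_1=0$).

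Summing the identity of Proposition~\ref{prop:voter-tasep-dual} over $y$ then gives that $\mathrm{Prob}(E_0(t)=i)$ equals the probability that in $\eta_t$ the particle of type $i$ is the $0$-leader; equivalently, the law of $E_0(t)$ agrees, up to the reflection implicit in the symmetries used to prove Proposition~\ref{prop:voter-tasep-dual}, with the law of the type $L_1(t)$ of the $0$-leader. Applying Theorem~\ref{th:leader-type} with $k=0$, which gives $L_1(t)/\sqrt t\to\frac{1}{\sqrt\pi}e^{-a^2/4}\mathbf{1}_{a\ge 0}\,da$ in distribution, and transporting this through the above identification (continuous mapping applied to $a\mapsto\pm a$), one obtains the claimed limit for $E_0(t)/\sqrt t$.

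I do not expect a genuine obstacle here; all the substantive analysis is already contained in Theorem~\ref{th:leader-type} and in the color-position and particle-hole symmetry arguments packaged in Propositions~\ref{prop:voter-coupling}--\ref{prop:voter-tasep-dual}. The two points that need a little care are (i) the combinatorial reduction that collapses the ``locally constant opinion'' condition to the endpoint equality $\tilde\nu_t(i)=\tilde\nu_t(0)$ via monotonicity of $\tilde\nu_t$, so that $\{E_0(t)=i\}$ is precisely $\bigcup_y\mathcal{A}_{y,i,0}$; and (ii) keeping track of orientation so that the $0$-leader of Theorem~\ref{th:leader-type} is paired with $E_0(t)$ with the correct sign.
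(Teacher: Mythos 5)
Your proposal is correct and follows essentially the same route as the paper: the authors' proof of Corollary~\ref{cor:coalescence} is precisely the one-line assertion that it follows from Proposition~\ref{prop:voter-tasep-dual} combined with Theorem~\ref{th:leader-type}, and your write-up simply fills in that derivation (the decomposition $\{E_0(t)=i\}=\bigcup_y\mathcal{A}_{y,i,0}$ via monotonicity of $\tilde\nu_t$, summation over $y$, and the sign/orientation bookkeeping). No substantive difference from the paper's intended argument.
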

	
	To our knowledge, Corollaries \ref{cor:asymp-voter} and \ref{cor:coalescence} are new. 
	Similar to Corollary \ref{cor:coalescence} observables of coalescence processes were studied previously in \cite{TZ11}, and Corollary \ref{cor:coalescence} was also independently obtained in \cite{TZ}. 
	
	\begin{rmk}
		In Proposition \ref{prop:voter-tasep-dual} we interpreted our main observable $\M_C(\eta)$ for the case when $\eta$ has two parts only. A similar interpretation (with analogous to Proposition \ref{prop:voter-tasep-dual} proof) also holds for a general $\M_C(\eta)$. 
	\end{rmk}
	
	\section{TASEP ranking process}
	\label{sec:Ranking}
	
	In this section we introduce the \textit{TASEP ranking process} and relate it to leaders in the multi-species TASEP. 
	
	\subsection{Definition}
	
	As before, consider a multi-species TASEP $\eta_t$ which starts from the initial configuration $\eta_0(x)=-x$. For each TASEP configuration $\eta$ define a map $\mathfrak{R}: \Z \to \Z_{> 0}$ via
	$$
	\mathfrak{R} (x) := \mbox{the number of $i \le x$ such that $\eta (i) \le \eta (x)$.}
	$$
	In words, for any position $x$ we look at the type of the particle that stands there, and count how many particles of smaller type stand to the left of it; we also always count the particle at $x$ itself. For the time evolution $\eta_t$, we obtain a process $\mathfrak{R}_t$. Clearly, $\mathfrak{R}_0 (x) = 1$, for all $x \in \Z$. Note that any particle of smaller type to the left of a particle at $x$ must have exchanged with it at some previous moment. Therefore, we call the value $\mathfrak{R}_t (x)$ the \textit{rank} of the particle standing at $x$, and we call the whole process $\mathfrak{R}_t (x)$ the \textit{TASEP ranking process}. 
	
	It is a direct check that this is also a Markov projection of TASEP, which is updated according to the following local rules at neighboring sites: If $ \mathfrak{R}_t (x) = a \ge b = \mathfrak{R}_t (x+1)$, then the update $\mathfrak{R}_{t+0} (x) = b$, $\mathfrak{R}_{t+0} (x+1)=a+1$ happens with rate 1. In the opposite case $ \mathfrak{R}_t (x) < \mathfrak{R}_t (x+1)$ the update is not possible.
	
	Let us remark that if we consider only the evolution of the set $\{x: \mathfrak{R}_{t} (x)=1\}$, we recover the coalescence process $\tilde \xi_t$ from Section \ref{sec:Embedding-Voter}. More generally, for any $k \in \Z_{> 0}$ the process $\mathfrak{R}_{t}$ has a Markov projection $\mathfrak{R}_{t}^{(k)}$ obtained via
	$$
	\mathfrak{R}_{t}^{(k)} (x) := \mathfrak{R}_{t} (x) + \infty \cdot \mathbf{1}_{\mathfrak{R}_{t} (x) >k}.
	$$
	In words, we can replace in the configuration of $\mathfrak{R}_{t} (x)$ all species of type $>k$ by a new type $\infty$; this will not affect the update rules, and will not change the evolution of the particles of types $1,2, \dots, k$. 
	
	\smallskip
	
	\begin{ex}
		
		\begin{center}
			\setlength{\tabcolsep}{4pt} % tighter columns; tweak as you like
			\begin{tabular}{l *{14}{>{\small}c}}
				\hline
				\textbf{TASEP $\eta$} & ... & >4 & >4 & 0  & 4 & -1 & 3 & -2 & 1 & 2 & <-2 & <-2 & ... & \\
				\textbf{Tasep ranking $\mathfrak{R}$} &... & ... & ... & 1 & 2 & 1 & 3 & 1 & 4 & 5 & ... & ... & ... &   \\
				\textbf{Coalescence process $\tilde \xi$} &... & ... & ... & 1 & 0 & 1 & 0 & 1 & 0 & 0 & ... & ... & ... &   \\
				\textbf{positions} &... & -6 & -5 & -4 & -3 & -2 & -1 & 0 & 1 & 2 & 3 & 4 & ... &   \\
				\hline
			\end{tabular}
		\end{center}
		
	\end{ex}
	
	\smallskip

	\subsection{Duality with leaders}
	
	Let $\mathcal{C}_1 (t) := \{ x \in \Z : \mathfrak{R}_{t} (x)=1 \}$ be the positions of the rank one particles in the TASEP ranking process, let $\mathcal{C}_2 (t):= \{ x \in \Z : \mathfrak{R}_{t} (x)=2 \}$ be the positions of the rank two particles in the TASEP ranking process, and $\mathcal{C}_{1,2} (t) := \mathcal{C}_1 (t) \cup \mathcal{C}_2 (t)$. 
	
	\begin{prop}
		\label{prop:ranking-duality}
		Let $k>0, s \ge 0$ be integers. We have
		\begin{multline}
			\label{eq:ranking-duality}
			\mathrm{Prob} \left(\mbox{ $(-k-s) \in \mathcal{C}_1 (t)$, $(-s) \in \mathcal{C}_2 (t)$, $\{ 0, -1, -2, \dots, -s+1, -s-1, \dots, -k-s \} \notin \mathcal{C}_{1,2} (t)$} \right) 
			\\ = \mathrm{Prob} \left( \mbox{particle of type $(k+s)$ is a 0-leader, and paticle of type $s$ is a (0,2)-leader} \right),
		\end{multline}
		where in the event in the right-hand side we, as before, consider TASEP $\eta_t$ which starts from the packed initial condition. 
	\end{prop}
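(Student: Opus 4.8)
The plan is to mirror the proof of Proposition~\ref{prop:voter-tasep-dual}. Since the TASEP ranking process is a deterministic functional of the configuration, $\mathfrak{R}_t(x)=|\{i\le x:\eta_t(i)\le\eta_t(x)\}|$, the event on the left of \eqref{eq:ranking-duality} is an event about $\eta_t$, and the first step is to unfold it in those terms. Namely, ``$(-k-s)\in\mathcal{C}_1(t)$'' says that $\eta_t(-k-s)$ is the smallest type among all particles at positions $\le -k-s$; ``$(-s)\in\mathcal{C}_2(t)$'' says that $\eta_t(-s)$ is the second-smallest type among all particles at positions $\le -s$; and ``$y\notin\mathcal{C}_{1,2}(t)$'' says that $\eta_t(y)$ is at least the third-smallest type among particles at positions $\le y$. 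Reading positions from left to right and tracking the running smallest and running second-smallest types, I would combine these into the statement that, over the window in question, the two records of the running smallest and second-smallest types are set precisely at $-k-s$ and $-s$ and nowhere else.

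The second step is to invoke the color--position symmetry of Proposition~\ref{prop:cp-symmetry} in the operational form $\{\eta_t(x)\}_{x\in\Z}\overset{d}{=}\{-\eta_t^{-1}(-x)\}_{x\in\Z}$, and perform the substitution $\eta_t(x)\rightsquigarrow -\eta_t^{-1}(-x)$ in the event obtained in the first step. Under this substitution $|\{i\le x:\eta_t(i)\le\eta_t(x)\}|$ turns into $|\{z\ge -x:\eta_t^{-1}(z)\ge\eta_t^{-1}(-x)\}|$, i.e.\ into the rank, counted from the right, of the particle of type $-x$ among all particles of type $\ge -x$. In particular ``$\eta_t(-k-s)$ is the smallest type over positions $\le 0$'' becomes ``$-\eta_t^{-1}(k+s)$ is the smallest value of $\{-\eta_t^{-1}(z):z\ge 0\}$'', that is $\eta_t^{-1}(k+s)=\max\{\eta_t^{-1}(z):z\ge 0\}$, which is exactly the statement that the particle of type $k+s$ is a $0$-leader; similarly ``$\eta_t(-s)$ is the second-smallest type over positions $\le 0$'' becomes the statement that the particle of type $s$ is a $(0,2)$-leader. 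Since the substitution preserves probabilities, this should yield \eqref{eq:ranking-duality}. (Unlike in Proposition~\ref{prop:voter-tasep-dual}, I do not expect to need a separate appeal to particle--hole duality, since the two minus signs produced by the color--position map cancel when a minimum of types over a left ray of positions is converted to a maximum of positions over a right ray of types.)

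The main obstacle will be the combinatorial bookkeeping tying the two steps together: one must check that the constraints ``$y\notin\mathcal{C}_{1,2}(t)$'' imposed at the positions strictly between $-k-s$ and $-s$ (and between $-s$ and $0$) match, after the color--position substitution, exactly the two leader conditions on the right of \eqref{eq:ranking-duality}, with nothing left over. For a general permutation these intermediate constraints would be genuinely independent of the two extremal ones, so this is where the packed initial condition $\eta_0(x)=-x$ has to be used --- essentially to control which patterns of intermediate ``second-minimum'' records, equivalently which relative orderings of the particles of types $s,s+1,\dots,k+s$, can actually occur. Once that reduction is in place, the remainder (unfolding $\mathfrak{R}_t$ and running the color--position computation) is routine.
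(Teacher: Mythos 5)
Your overall route coincides with the paper's: rewrite the left-hand event as the statement that the minimum and the second minimum of $\{\eta_t(i)\}_{i\le 0}$ sit at positions $-k-s$ and $-s$, then apply the color--position symmetry of Proposition~\ref{prop:cp-symmetry} and flip minima into maxima over the set of types $\ge 0$; your observation that the particle--hole duality step of Proposition~\ref{prop:voter-tasep-dual} is not needed here is also correct and agrees with the paper. The gap is in your plan for the one step you rightly single out as nontrivial. Because the color--position symmetry is only a distributional identity, the reduction of the ranking event to the min/second-min event must be an honest equality of events, valid configuration by configuration; appealing to the packed initial condition to rule out bad intermediate patterns cannot work, because those patterns are reachable. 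Concretely, take $(k,s)=(2,0)$: two swaps (first at $(-1,0)$, then at $(-2,-1)$) turn the packed configuration into one with types $(0,2,1)$ at positions $(-2,-1,0)$ and everything else unchanged. There the minimum and second minimum of $\{\eta_t(i)\}_{i\le 0}$ are at $-2$ and $0$, yet $\mathfrak{R}_t(-1)=2$, so $-1\in\mathcal{C}_2(t)$. Hence ``the two records are set at $-k-s$ and $-s$ and nowhere else,'' read as forbidding ranks $1$ and $2$ at every intermediate position, is strictly stronger than the min/second-min event (for $(k,s)=(2,0)$ the two probabilities are $O(t^3)$ versus $t^2/2+O(t^3)$), and no reachability argument closes this gap.

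What does work, and what you should substitute for your first step, is the deterministic identity of events
\begin{equation*}
\Bigl\{\eta_t(-k-s)=\min_{i\le 0}\eta_t(i)\Bigr\}\cap\Bigl\{\eta_t(-s)={\min}^2_{i\le 0}\eta_t(i)\Bigr\}
=\Bigl\{-k-s\in\mathcal{C}_1(t),\ -s\in\mathcal{C}_2(t),\ B_1\cap\mathcal{C}_1(t)=\emptyset,\ B_2\cap\mathcal{C}_{1,2}(t)=\emptyset\Bigr\},
\end{equation*}
where $B_1=\{-k-s+1,\dots,-s-1\}$ and $B_2=\{-s+1,\dots,0\}$; that is, rank-$2$ positions are permitted strictly between $-k-s$ and $-s$, and only there. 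Both inclusions follow by locating the position carrying the smallest (respectively, second smallest) offending type and computing its rank. This is also how the left-hand side of \eqref{eq:ranking-duality} must be interpreted for the proposition to hold: with ``$\notin\mathcal{C}_{1,2}$'' imposed on all of $B_1\cup B_2$, as the statement literally reads, the identity fails for every $k\ge 2$. Once the event identity is recorded in this corrected form, the color--position computation you outline finishes the proof exactly as in the paper.
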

	
	\begin{proof}
		
		Let us denote by $\max^2 (\mathcal{S})$ ( $\min^2 (\mathcal{S})$) the second largest (smallest, respectively) element of a set of numbers $\mathcal{S}$. We have
		\begin{multline*}
			\mathrm{Prob} \left(\mbox{ $(-k-s) \in \mathcal{C}_1 (t)$, $(-s) \in \mathcal{C}_2 (t)$, $\{ 0, -1, -2, \dots, -s+1, -s-1, \dots, -k-s \} \notin \mathcal{C}_{1,2} (t)$} \right) 
			\\ =
			\mathrm{Prob} \left(\min \left( \{ \eta_t (i) \}_{i \le 0} \right) = \eta_t (-k-s), \min\nolimits^2 \left( \{ \eta_t (i) \}_{i \le 0} \right) = \eta_t (-s) \right) 
			\\ = \mathrm{Prob} \left(\min\nolimits \left( \{ -\eta_t^{-1} (-i) \}_{i \le 0} \right) = - \eta_t (-k-s), \min\nolimits^2 \left( \{ -\eta_t^{-1} (-i) \}_{i \le 0} \right) = - \eta_t (-s) \right)
			\\ = \mathrm{Prob} \left(\max\nolimits \left( \{ \eta_t^{-1} (-i) \}_{i \le 0} \right) = \eta_t (-k-s), \max\nolimits^2 \left( \{ \eta_t^{-1} (-i) \}_{i \le 0} \right) = \eta_t (-s) \right)
			\\ = \mathrm{Prob} \left( \mbox{particle of type $(k+s)$ is a 0-leader, and paticle of type $s$ is a (0,2)-leader} \right),
		\end{multline*}
		where the second equality follows from Proposition \ref{prop:cp-symmetry}, while the first and the fourth equalities are rephrasings of definitions. 
	\end{proof}
	
	\begin{rmk}
		Proposition \ref{prop:ranking-duality} allows to extract information about the ranking process from our main observables $\M_C(t)$; in particular, the asymptotics of the right-hand side of \eqref{eq:ranking-duality} is given by Theorem \ref{th:joint}. More general observables of the TASEP ranking process can also be expressed through $\{ \M_C(t) \}$ via similar applications of the color-position symmetry. We do not go into further details in the present paper. 
	\end{rmk}

	%\bibliographystyle{alpha}
	%\bibliography{references}
	
\end{document}